\documentclass[11pt, twoside]{article}

%
\usepackage{graphicx}
\usepackage[caption=false]{subfig}
\captionsetup[subtable]{position=bottom}
\captionsetup[table]{position=bottom}
\usepackage{amsmath}
\usepackage{amssymb,amsfonts}
\usepackage{amsthm}
\usepackage{bm}
\usepackage{mathrsfs}
\usepackage{amssymb}
\usepackage{multirow}

\usepackage[margin=1in]{geometry}

\usepackage{algorithm}
\usepackage{algorithmic}

\numberwithin{equation}{section}
\usepackage{multirow}
\usepackage{makecell}
\usepackage{booktabs}
\theoremstyle{definition}
\newtheorem{theorem}{Theorem}

\newtheorem{lemma}{Lemma}
\newtheorem{corollary}{Corollary}
\newtheorem{definition}{Definition}
\newtheorem{remark}{Remark}
\newtheorem{example}{Example}
\newtheorem{assumption}{Assumption}

\usepackage{cite}
\usepackage{hyperref}
\usepackage[nameinlink]{cleveref}
\newcommand{\vertiii}[1]{{\left\vert\kern-0.25ex\left\vert\kern-0.25ex\left\vert #1 
		\right\vert\kern-0.25ex\right\vert\kern-0.25ex\right\vert}}

\usepackage{fancyhdr}
\pagestyle{fancy}
\fancyhead[EC]{G.\ Chen, P.\ Monk,  Y.\ Zhang}
\fancyhead[OC]{Superconvergent  HDG methods for  Maxwell's equations via
	the $M$-decomposition}
\fancyhead[L,R]{}
\cfoot{\thepage}

\begin{document}
	
	\title{HDG and CG methods  for the Indefinite Time-Harmonic Maxwell's Equations under minimal regularity}

\author{Gang Chen%
	\thanks{ College of  Mathematics, Sichuan University, Chengdu 610064, China (\mbox{cglwdm@uestc.edu.cn}).}
	\and
	Peter Monk%
	\thanks{Department of Mathematics Science, University of Delaware, Newark, DE, USA (\mbox{monk@udel.edu}).}
	\and
	Yangwen Zhang%
	\thanks{Department of Mathematics Science, University of Delaware, Newark, DE, USA (\mbox{ywzhangf@udel.edu}).}
}

\date{\today}

\maketitle

\begin{abstract}
 We propose to use a hybridizable discontinuous Galerkin (HDG) method combined with the continuous Galerkin (CG) method to approximate  Maxwell's equations. We make two contributions in this paper. First, even though there are many papers using HDG methods to approximate Maxwell's equations, to our knowledge they all assume that the coefficients are smooth (or constant). Here, we derive optimal convergence estimates for our HGD-CG approximation when the electromagnetic coefficients are  {\em piecewise} smooth. This requires new techniques of analysis. Second, we use  CG elements to approximate the Lagrange multiplier used to enforce the divergence condition and  we obtain  a discrete system in which we can decouple the discrete the Lagrange multiplier. Because we are using a continuous Lagrange multiplier space, the
number of degrees of freedom devoted to this are less than for other
HDG methods. We present numerical experiments to confirm our theoretical results.
\end{abstract}

\section{Introduction}\label{}
Maxwell's equations govern the propagation of electromagnetic waves and  have wide applications in science and technology; such as aerospace industry, telecommunication, medicine, and biology. Hence, a large number of computational techniques have been developed for solving Maxwell's equations, including finite difference methods, integral equation methods and finite element methods.  Amongst these techniques, the finite element method is a popular method for the solution of time-harmonic electromagnetic problems due to its
ability to handle complex geometries and inhomogeneous materials.

Let $\Omega\subset \mathbb R^3$ be a  {simply connected} Lipschitz polyhedral domain with  {connected} boundary $\partial \Omega$, we consider the {\em indefinite} time-harmonic Maxwell equations  with a perfectly conducting boundary: find $(\bm u, p)$ that satisfies
\begin{subequations}\label{Maxwell_PDE_ori}
	\begin{align}
	\bm\nabla  \times (\mu_r^{-1}  \bm\nabla\times\bm u)-\kappa^2\epsilon_r\bm u + \overline{\epsilon_r}\nabla p&=\bm f &\text{in }\Omega,\label{Maxwell_PDE_ori1}\\
	\bm \nabla\cdot(\epsilon_r\bm u)&= {\rho} &\text{in }\Omega,\label{Maxwell_PDE_ori2}\\
	\bm n\times\bm u&=\bm 0&\text{on }\partial\Omega,\label{Maxwell_PDE_ori3}\\
	p&=0&\text{on }\partial\Omega.\label{Maxwell_PDE_ori4}
	\end{align}
\end{subequations}
Here $\mu_r$ and $\epsilon_r$ are the relative magnetic permeability and the relative electric permittivity,  which may be complex valued  {(and the overbar denotes complex conjugation)}.  In addition
$\bm f =ik\epsilon_0\bm j$,  where $\bm j$ is the given current density and $\epsilon_0$ is the permittivity of vacuum, and $\kappa>0$ is the wave number.  {The function $\rho$ denotes the charge density. We note that the topological assumption can be relaxed~\cite{Monk_Maxwell_Book_2003}, but we choose the simplest setting here.}

The Lagrange multiplier $p$ is present to stabilize problem by allowing explicit imposition of the divergence constraint \eqref{Maxwell_PDE_ori2}, and is important to stabilize low frequency (small $\kappa$) problems~\cite{demko98} including the special case $\kappa=0$ which corresponds to an electrostatic field.  In electromagnetism, the function $\bm f$ and $\rho$ are not independent, because conservation of charge requires that $\nabla\cdot\bm f+ \kappa^2 \rho=0$, so that $p=0$.  To simplify the presentation,  shall assume that $\rho=0$ in the remainder of the paper.

{Among finite element methods,  curl-conforming edge elements (i.e. in $\bm{H}(\text{curl};\Omega)$)} have been widely studied, see for example \cite{Nedelec_mixed_NM_1986,Nedelec_mixed_NM_1980,Hiptmair_electromagnetism_Acta_2002,Monk_Maxwell_Book_2003,Monk_Maxwell_NM_1992,Zhong_Maxwell_JCM_2009}.  {These are often referred to as edge elements,} and 
are known to eliminate the problem of spurious modes which may arise when standard finite elements are used to
discretize Maxwell’s equations \cite{Bossavit_IEEE_1990}.  Low-order edge elements are often used for problems in electromagnetics because they can be easily implemented, however, the use of low-order edge
elements often leads to a discrete linear system with a large number of unknowns, especially for electromagnetic problems at high frequencies  {(large $\kappa$)}. As a result, high-order edge elements have been developed \cite{Ainsworth_CMAME_2001,Ainsworth_IJNME_2003,demko98} and shown to be more effective than low-order edge elements. However, high-order edge elements introduce
extra degrees of freedom in the interior of the elements which increase dramatically with the order of approximation. 
Typically, these interior degrees of freedom can be eliminated by using a procedure known as static condensation \cite{Ledger_ACME_2005}. However, the implementation of high order edge element methods is complicated. Hence,  non-conforming methods provide an  interesting   alternative for this kind of problem and may also be attractive for nonlinear problems.

{Interior Penalty }Discontinuous Galerkin (DG) methods  have also been used to approximate the solution of the Maxwell's equations for some time. The first DG method for solving
Maxwell's equations with high frequency was analyzed in \cite{Monk_Maxwell_CMAME_2002} and much improved in \cite{Houston_Maxwell_NM_2005}.  In \cite{Hesthaven_JCP_2002,Hesthaven_PTRSLAMP_2004}, the local discontinuous Galerkin (LDG) method with high-order nodal elements is used to solve Maxwell's equations.   {From these studies we see that} DG methods have several distinct advantages including their capabilities to handle complex geometries, to provide high-order accurate solutions, to perform $hp$ adaptivity, and to retain excellent scalability. However, many existing DG methods are known to be computationally expensive because they have too many degrees of freedom due to nodal duplication on element boundaries.

{In part to improve computational efficiency,} Hybridizable discontinuous Galerkin (HDG) methods were proposed by Cockburn et al. in \cite{Cockburn_Gopalakrishnan_Lazarov_Unify_SINUM_2009}. HDG methods are based on a mixed formulation and utilize a numerical flux and a numerical trace to approximate the flux and the trace of the solution. The  {volume based} approximate flux and solution variables can be eliminated element-by-element  {condensation}. This process leads to a global equation for the approximate boundary traces only. As a result, HDG methods have significantly less globally coupled unknowns, a smaller memory requirement, and computational cost compared to other DG methods. Furthermore, HDG methods have been applied to Maxwell's equations in \cite{Cockburn_maxwell_HDG_JCP_2011} but without an error analysis. Later on, an error analysis was provided 
in \cite{Chen_maxwell_HDG_2017,Chen_Maxwell_HDG_CMAME_2018,Du_Arxiv_2019}  for  zero frequency and  in \cite{Feng_Maxwell_CMAM_2016,Lu_hp_HDG_Maxwell_Math_Comp_2017} for impedance boundary conditions and high wave number. In a very recent paper~\cite{ChenMonkZhang1}, we used the  concept of an $M$-decomposition, which was proposed by Cockburn et al in \cite{Cockburn_M_decomposition_Part1_Math_Comp_2017} for elliptic PDEs to analyze HDG schemes for Maxwell's equations in two dimensions.  This analysis provides conditions on the HDG spaces need to obtain optimal convergence, and superconvergence of some variables.  The extension of this approach to 3D is challenging, and remains to be done. It is  worthwhile to mention  that all the above works only considered smooth  {(or even constant)} coefficients.

When the material through which the waves propagate is
heterogeneous,  {the functions $\mu_r$ and $\epsilon_r$ are non constant, and have jumps between materials of different type}. In such a situation, the modest regularity pickup of the exact solution in the scale of Sobolev spaces  is typically lower than $1/2$ and can be arbitrarily close to $0$; see \cite[Theorem 5.1 ]{Bonito_Maxwell_regularity_JMAA_2013}. To the best of our knowledge, all the  {previously mentioned} HDG methods  {have not been proved to converge} for nonsmooth coefficients  {because the standard analysis uses the solution of a dual problem that must have a sufficient regular solution to allow for approximation using appropriate interpolation operators. When the coefficients are piecewise smooth, the appropriate regularity estimates are not available.  Thus even if the true solution is smooth, the error analysis cannot currently be carried out using the methods considered in the previously mentioned papers. The main novelty of this paper is to prove convergence of our HDG method under realistic assumptions on the coefficients and low regularity for the solution of the
	adjoint problem.
	In particular}, we follow the main idea in \cite{Ern_CMAM_2018} to construct a stable and commuting  quasi-interpolation operator, and then  {adapt} the techniques in \cite{Buffa_SINUM_2006}  {(developed to analyze standard DG methods)} to approximate equation \eqref{Maxwell_PDE_ori} using HDG under weak assumptions on the coefficients, see Assumption \ref{Assumptions_coes}.

Furthermore, if we take $q\in H_0^1(\Omega)$ and integrate \eqref{Maxwell_PDE_ori1} multiplied by $\nabla q$ we obtain  $( {\overline{\epsilon_r}}\nabla p,\nabla q)_{\bm L^2(\Omega)}=(\bm f,\nabla q)_{\bm L^2(\Omega)}$ giving an independent system for $p$. Unfortunately, such decomposition does not hold for the discrete system found in all the  above mnetioned HDG methods. 
{A second novel contribution of this paper is to mix the use of HDG spaces for $\bm u$ and a continuous Galerkin (CG) spaces for $p$.  Our method,  {which we term HDG-CG}, retains the flexibility of HDG for the desired field $\bm u$, but uses a smaller space for $p$ which is usually easy to approximate (and for which designing $hp$-spaces is much easier than for edge elements).} Then the system of the Lagrange multiplier is thus SPD and can be solved very efficiently by MG or AMG. In other words, it is good for designing a block preconditioner and therefore, it is good for solving the whole system. 

The outline of the paper is as follows. In \Cref{wellposednessandregularity}, we  {recall} the well posedness and regularity of the Maxwell equations \eqref{Maxwell_PDE_ori}. In \Cref{HDG_mathods}, we set some notation and give the HDG formulation of \eqref{Maxwell_PDE_ori}. The error analysis is given in \Cref{Error_estimates}, we obtain optimal convergence rate for the electric field $\bm u$ and  $\bm \nabla \times \bm u$. Numerical experiments are provided to confirm our theoretical results in \Cref{Numerical_experiments}.  	

\section{Well-posedness and regularity of Maxwell's equations}
\label{wellposednessandregularity}

In this section, we first set some notation which will be used through this paper. Second, we discuss the well-posedness and regularity of Maxwell's equations \eqref{Maxwell_PDE_ori}. Next, we give the  {continuous} inf-sup condition of the mixed form of  \eqref{Maxwell_PDE_ori}  {since we will use it later in our analysis of the} HDG-CG method for  {approximating}  \eqref{Maxwell_PDE_ori}.

For any  bounded domain $\Lambda\subset\mathbb{R}^3$, let $\bm H^{m}(\Lambda)$ denote the usual  $m^{th}$-order Sobolev space of vector functions on $\Lambda$, and $\|\cdot\|_{m, \Lambda}$, $|\cdot|_{m,\Lambda}$  denote the corresponding norm and semi-norm. We use $(\cdot,\cdot){_\Lambda}$ to denote the complex inner product on $\bm L^2(\Lambda)$. Similarly, for the boundary $\partial \Lambda$ of $\Lambda$, we use $\langle\cdot,\cdot\rangle_{\partial\Lambda}$ to denote the $\bm L^2$ inner product on $\partial \Lambda$. We define
\begin{align*}
\bm{H}(\text{curl};\Lambda)&:=\{\bm u\in \bm L^2(\Lambda): \bm \nabla \times \bm u \in \bm L^2({\Lambda}) \}, \\
\bm H_0({\rm curl;\Lambda})&:=\{\bm u\in \bm  H({\rm curl;\Lambda}): \bm n \times \bm u=\bm 0 \textup{ on } \partial \Lambda\},\\
\bm H({\rm div}; \Lambda)&:=\{\bm u\in \bm L^2(\Lambda): \bm \nabla \cdot \bm u \in L^2({\Lambda}) \}, \\
\bm H({\rm div}^0_{\epsilon_r};\Lambda)&:=\{\bm u\in \bm L^2(\Lambda): \bm \nabla\cdot(\epsilon_r\bm u)=0\},
%
%
\end{align*}
where $ \bm n$ is  the  unit outward normal vector on $\partial \Lambda$.

In this paper, we  {allow} the coefficients $\mu_r$ and $\epsilon_r$ of the Maxwell's equations  \eqref{Maxwell_PDE_ori} to be non-smooth. More precisely,  {we assume the following:} 
\begin{assumption}\label{Assumptions_coes}
	The domain $\Omega$ can be decomposed into $N$ subdomains denoted $\Omega_j$, $j=1,2\ldots, N$ such that
	$\bar{\Omega} = \bigcup_{j=1}^N \bar \Omega_j$, $\Omega_i\cap\Omega_j=\emptyset$ if $i\neq j$, and each subdomain $\Omega_j$, $j=1,2\ldots, N$, is connected and has a Lipschitz boundary. Moreover,  
	\begin{description}
		\item[(A)] $\mu_r, \epsilon_r \in W_{\Sigma}^{1, \infty}(\Omega):=\{ v\in  L^\infty(\Omega);  \nabla( v|_{\Omega_i})\in \bm L^\infty(\Omega_i), i=1,2\ldots, N\}$.
		
		\item[(B)] There  {constants} exist $\bar\mu_r, \bar\epsilon_r>0$ such that ${\rm Re}~ (\mu_r)> \bar \mu_r $  and ${\rm Re}~( \epsilon_r)> \bar \epsilon_r$  {a.e. in $\Omega$.}
	\end{description}	
\end{assumption}
We don't require any positivity on the imaginary part of the coefficients in the Assumption (B) although generally $\Im(\epsilon_r)\geq 0$. 

\subsection{Regularity results for Maxwell's equations}
To investigate the regularity of the Maxwell's equations \eqref{Maxwell_PDE_ori}, one needs to characterize the space $\bm H_0({\rm curl};\Omega)$,  $\bm H({\rm div}_{\epsilon_r}^0;\Omega)$ and $\bm L^2(\Omega)$. The next three lemmas give  {several properties} of these spaces, and the proofs of these lemmas can be found in \cite{Hiptmair_electromagnetism_Acta_2002}.
\begin{lemma}[{\cite[Lemma 4.2 ]{Hiptmair_electromagnetism_Acta_2002}}] \label{hip1}
	There exists $s_0>0$ such  $\bm H_0({\rm curl};\Omega)\cap \bm H({\rm div}_{\epsilon_r}^0;\Omega)$
	is continuously embedded in $\bm H^s(\Omega)$ for all $s<s_0$, i.e., the following estimate holds
	\begin{align*}
	\|\bm u\|_{\bm H^s(\Omega)}\le C\left( \|\bm u\|_{\bm L^2(\Omega)}+ \|\bm \nabla\times\bm u\|_{\bm L^2(\Omega)}\right).
	\end{align*}
\end{lemma}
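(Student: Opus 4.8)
The plan is to reduce the statement to a scalar elliptic shift estimate by means of a regular (Helmholtz-type) decomposition. First I would take $\bm u\in\bm H_0(\mathrm{curl};\Omega)\cap\bm H(\mathrm{div}^0_{\epsilon_r};\Omega)$ and set $\bm\zeta:=\bm\nabla\times\bm u\in\bm L^2(\Omega)$. A curl is divergence free, so $\bm\nabla\cdot\bm\zeta=0$; and since $\bm n\times\bm u=\bm 0$ on $\partial\Omega$, the normal trace $\bm\zeta\cdot\bm n$ is the surface curl of the (vanishing) tangential trace of $\bm u$, so $\bm\zeta\cdot\bm n=0$ in $H^{-1/2}(\partial\Omega)$. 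Because $\Omega$ is a simply connected Lipschitz domain with connected boundary, the classical vector potential theorem (Girault-Raviart, Amrouche-Bernardi-Dauge-Girault) supplies $\bm z\in\bm H^1(\Omega)$ with $\bm\nabla\times\bm z=\bm\zeta$, $\bm\nabla\cdot\bm z=0$, $\bm n\times\bm z=\bm 0$ on $\partial\Omega$, and $\|\bm z\|_{\bm H^1(\Omega)}\le C\|\bm\zeta\|_{\bm L^2(\Omega)}=C\|\bm\nabla\times\bm u\|_{\bm L^2(\Omega)}$.

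Next, $\bm u-\bm z$ is curl free and has vanishing tangential trace. Simple connectedness of $\Omega$ gives $\bm u-\bm z=\bm\nabla\varphi$ for some $\varphi\in H^1(\Omega)$, and $\bm n\times\bm\nabla\varphi=\bm 0$ on the connected boundary forces $\varphi$ to be constant there, so after normalization $\varphi\in H^1_0(\Omega)$; the Friedrichs inequality gives $\|\bm\nabla\varphi\|_{\bm L^2(\Omega)}\le\|\bm u\|_{\bm L^2(\Omega)}+\|\bm z\|_{\bm L^2(\Omega)}\le C(\|\bm u\|_{\bm L^2(\Omega)}+\|\bm\nabla\times\bm u\|_{\bm L^2(\Omega)})$. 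The constraint $\bm\nabla\cdot(\epsilon_r\bm u)=0$ then becomes the scalar diffusion problem
\[
-\bm\nabla\cdot(\epsilon_r\bm\nabla\varphi)=\bm\nabla\cdot(\epsilon_r\bm z)\quad\text{in }\Omega,\qquad \varphi\in H^1_0(\Omega),
\]
with the piecewise-$W^{1,\infty}$ coefficient $\epsilon_r$ of Assumption~\ref{Assumptions_coes}. The key observation about the right-hand side is that, although $\epsilon_r$ jumps across the interfaces, on each subdomain $\epsilon_r\bm z\in\bm H^1(\Omega_j)$, and a piecewise $\bm H^1$ field over a Lipschitz partition lies in $\bm H^t(\Omega)$ for every $t<1/2$; hence $g:=\bm\nabla\cdot(\epsilon_r\bm z)\in H^{t-1}(\Omega)$ for $t<1/2$ with $\|g\|_{H^{t-1}(\Omega)}\le C\|\bm z\|_{\bm H^1(\Omega)}\le C(\|\bm u\|_{\bm L^2(\Omega)}+\|\bm\nabla\times\bm u\|_{\bm L^2(\Omega)})$.

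Finally I would invoke the elliptic shift regularity for this transmission problem on a Lipschitz polyhedron with coefficient satisfying Assumption~\ref{Assumptions_coes}: there is $s_1>0$ so that $g\mapsto\varphi$ maps $H^{s-1}(\Omega)$ boundedly into $H^{1+s}(\Omega)$ for all $s<s_1$. This is exactly the type of low-regularity estimate recorded in the references cited in the introduction (see \cite{Bonito_Maxwell_regularity_JMAA_2013} and references therein); the exponent $s_1$ may be arbitrarily small but is strictly positive. Setting $s_0:=\min(s_1,1/2)$, one gets $\varphi\in H^{1+s}(\Omega)$ and hence $\bm\nabla\varphi\in\bm H^s(\Omega)$ for $s<s_0$, while $\bm z\in\bm H^1(\Omega)\hookrightarrow\bm H^s(\Omega)$; adding the two estimates above yields $\|\bm u\|_{\bm H^s(\Omega)}\le C(\|\bm u\|_{\bm L^2(\Omega)}+\|\bm\nabla\times\bm u\|_{\bm L^2(\Omega)})$, which is the claim. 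The main obstacle is precisely the scalar shift theorem in the last step: it is the one ingredient that genuinely uses the interface structure of $\epsilon_r$ on a merely Lipschitz domain, it determines (and caps) the attainable exponent $s_0$, and it is the point where one must appeal to the finer regularity literature rather than a textbook argument; a secondary technical point is producing the $\bm H^1$ vector potential with the correct gauge and tangential boundary condition on a general Lipschitz domain, which is standard but not elementary.
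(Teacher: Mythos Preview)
The paper does not give its own proof of this lemma; it only cites \cite[Lemma~4.2]{Hiptmair_electromagnetism_Acta_2002}. Your outline is essentially the standard route taken there, reducing to a scalar transmission problem via a regular-type decomposition and then invoking an elliptic shift.

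There is, however, one genuine gap. The vector potential $\bm z\in\bm H^1(\Omega)$ you invoke, with simultaneously $\bm\nabla\times\bm z=\bm\zeta$, $\bm\nabla\cdot\bm z=0$, and $\bm n\times\bm z=\bm 0$, does not exist on a general Lipschitz polyhedron: such a $\bm z$ would lie in $\bm H_0(\mathrm{curl};\Omega)\cap\bm H(\mathrm{div}^0;\Omega)$, and Costabel's embedding places this space only in $\bm H^{1/2}(\Omega)$ in the Lipschitz case---full $\bm H^1$ regularity needs convexity or a $C^{1,1}$ boundary. The Amrouche--Bernardi--Dauge--Girault potentials that are genuinely in $\bm H^1$ come either without a boundary condition or with $\bm z\cdot\bm n=0$, neither of which yields $\bm u-\bm z\in\bm H_0(\mathrm{curl};\Omega)$. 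So the ``secondary technical point'' you flag at the end is not merely delicate; as stated it is false.

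The correct replacement is the \emph{regular decomposition} of $\bm H_0(\mathrm{curl};\Omega)$ (Birman--Solomyak, Pasciak--Zhao, Hiptmair, Costabel--McIntosh): every $\bm u\in\bm H_0(\mathrm{curl};\Omega)$ admits $\bm u=\bm z+\bm\nabla\varphi$ with $\bm z\in(H^1_0(\Omega))^3$, $\varphi\in H^1_0(\Omega)$, and $\|\bm z\|_{\bm H^1(\Omega)}\le C\|\bm\nabla\times\bm u\|_{\bm L^2(\Omega)}$. This $\bm z$ is not divergence-free, but that property is never used downstream: your bound $\bm\nabla\cdot(\epsilon_r\bm z)\in H^{t-1}(\Omega)$ for $t<1/2$ needs only that $\bm z$ be piecewise $\bm H^1$. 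With this substitution the remainder of your argument (the piecewise-to-global $H^t$ step and the elliptic shift for the transmission problem in $\varphi$) goes through as written.
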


\begin{lemma}[{\cite[Corollary 4.3]{Hiptmair_electromagnetism_Acta_2002}}]\label{hip3}
	The embedding $\bm H_0({\rm curl};\Omega)\cap \bm H({{\rm div}^0_{\epsilon_r}};\Omega)\hookrightarrow  \bm L^2(\Omega)$ is compact.
\end{lemma}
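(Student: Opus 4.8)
The plan is to deduce the compactness of this embedding from the fractional regularity shift already recorded in \Cref{hip1}, combined with the classical Rellich--Kondrachov theorem. Write $\bm X := \bm H_0({\rm curl};\Omega)\cap\bm H({\rm div}^0_{\epsilon_r};\Omega)$, equipped with the graph norm $\bm u\mapsto \big(\|\bm u\|_{\bm L^2(\Omega)}^2+\|\bm\nabla\times\bm u\|_{\bm L^2(\Omega)}^2\big)^{1/2}$; since the constraint $\bm\nabla\cdot(\epsilon_r\bm u)=0$ defines a closed subspace, $\bm X$ is a Banach (in fact Hilbert) space, so it makes sense to speak of bounded sequences in $\bm X$. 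By \Cref{hip1} there is $s_0>0$ such that, fixing any $s\in(0,s_0)$, the identity map $\bm X\hookrightarrow\bm H^s(\Omega)$ is bounded, with
\begin{align*}
\|\bm u\|_{\bm H^s(\Omega)}\le C\big(\|\bm u\|_{\bm L^2(\Omega)}+\|\bm\nabla\times\bm u\|_{\bm L^2(\Omega)}\big)\qquad\text{for all }\bm u\in\bm X.
\end{align*}

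Next I would invoke compactness of the Sobolev embedding itself. Since $\Omega$ is a bounded Lipschitz polyhedron it possesses a Sobolev extension operator, so the embedding $\bm H^s(\Omega)\hookrightarrow\bm L^2(\Omega)$ is compact for every $s>0$ (the fractional Rellich--Kondrachov theorem). The embedding $\bm X\hookrightarrow\bm L^2(\Omega)$ is then the composition of the bounded map $\bm X\hookrightarrow\bm H^s(\Omega)$ with the compact map $\bm H^s(\Omega)\hookrightarrow\bm L^2(\Omega)$, hence compact. Spelled out: given a sequence $(\bm u_k)\subset\bm X$ bounded in the graph norm, the displayed estimate shows $(\bm u_k)$ is bounded in $\bm H^s(\Omega)$, and compactness of $\bm H^s(\Omega)\hookrightarrow\bm L^2(\Omega)$ furnishes a subsequence converging strongly in $\bm L^2(\Omega)$, which is exactly the assertion of the lemma.

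The genuine content is thus entirely packaged inside \Cref{hip1}: the Weber/Weck-type statement that fields in $\bm H_0({\rm curl};\Omega)$ which are divergence-free in the $\epsilon_r$-weighted sense gain a positive amount of fractional Sobolev regularity. That is precisely where the piecewise-smooth but globally non-smooth structure of $\epsilon_r$ (Assumption~\ref{Assumptions_coes}) enters, and here it is taken as known from \cite{Hiptmair_electromagnetism_Acta_2002}. Granting it, the remaining work—upgrading a continuous embedding into $\bm H^s(\Omega)$ to a compact embedding into $\bm L^2(\Omega)$—is routine; the one point I would be careful to check is that the fractional Rellich theorem is indeed available on a general bounded Lipschitz domain (it is, via the extension operator), so that no additional regularity of $\partial\Omega$ need be assumed beyond what is already in force.
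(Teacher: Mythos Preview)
Your argument is correct and is exactly the standard route: the paper does not supply its own proof but simply cites \cite[Corollary~4.3]{Hiptmair_electromagnetism_Acta_2002}, where the compactness is deduced precisely as you do, namely by combining the fractional regularity shift of \Cref{hip1} with the compact embedding $\bm H^s(\Omega)\hookrightarrow\bm L^2(\Omega)$ for $s>0$ on a bounded Lipschitz domain.
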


\begin{lemma}[{\cite[Poincar\'{e}-Friedrichs-type inequality, Corollary 4.4]{Hiptmair_electromagnetism_Acta_2002}}] \label{hip2}
	There is a constant $C>0$ depending on $\Omega$ only, such that	for any \[ {\bm u\in}\bm H_0({\rm curl};\Omega)\cap \bm H({\rm div}_{\epsilon_r}^0;\Omega),\] 
	we have
	\begin{align*}
	\|\bm u\|_{\bm L^2(\Omega)}\le C\|\bm \nabla\times\bm u\|_{\bm L^2(\Omega)}.
	\end{align*}
\end{lemma}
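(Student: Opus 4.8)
The plan is to establish the inequality by a compactness--contradiction argument, the key input being the compact embedding furnished by \Cref{hip3}. Suppose the claimed bound fails. Then there is a sequence $\{\bm u_n\}\subset \bm H_0(\mathrm{curl};\Omega)\cap\bm H(\mathrm{div}_{\epsilon_r}^0;\Omega)$ with $\|\bm u_n\|_{\bm L^2(\Omega)}=1$ for all $n$ while $\|\bm\nabla\times\bm u_n\|_{\bm L^2(\Omega)}\to 0$. In particular $\{\bm u_n\}$ is bounded in the graph norm $\bm u\mapsto \|\bm u\|_{\bm L^2(\Omega)}+\|\bm\nabla\times\bm u\|_{\bm L^2(\Omega)}$ of $\bm H_0(\mathrm{curl};\Omega)\cap\bm H(\mathrm{div}_{\epsilon_r}^0;\Omega)$.

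By \Cref{hip3} the embedding of this space into $\bm L^2(\Omega)$ is compact, so after passing to a subsequence (not relabeled) we have $\bm u_n\to\bm u$ strongly in $\bm L^2(\Omega)$ with $\|\bm u\|_{\bm L^2(\Omega)}=1$. Passing to the distributional limit, the identities $\bm u_n\to\bm u$ and $\bm\nabla\times\bm u_n\to\bm 0$ in $\bm L^2(\Omega)$ force $\bm\nabla\times\bm u=\bm 0$; likewise, since $\bm H_0(\mathrm{curl};\Omega)$ and $\bm H(\mathrm{div}_{\epsilon_r}^0;\Omega)$ are closed under $\bm L^2$-limits of graph-norm-bounded sequences, the limit satisfies $\bm u\in \bm H_0(\mathrm{curl};\Omega)\cap\bm H(\mathrm{div}_{\epsilon_r}^0;\Omega)$, i.e. $\bm n\times\bm u=\bm 0$ on $\partial\Omega$ and $\bm\nabla\cdot(\epsilon_r\bm u)=0$ in $\Omega$.

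It remains to show $\bm u=\bm 0$, contradicting $\|\bm u\|_{\bm L^2(\Omega)}=1$. Since $\Omega$ is simply connected with connected boundary, a curl-free field in $\bm H_0(\mathrm{curl};\Omega)$ is a gradient: $\bm u=\nabla\phi$ for some $\phi\in H_0^1(\Omega)$ (the additive constant being fixed by the homogeneous boundary condition). The constraint $\bm\nabla\cdot(\epsilon_r\bm u)=0$ then reads $\bm\nabla\cdot(\epsilon_r\nabla\phi)=0$ in $\Omega$ with $\phi=0$ on $\partial\Omega$. Testing against $\phi$ gives $(\epsilon_r\nabla\phi,\nabla\phi)_{\Omega}=0$; taking real parts and invoking Assumption~(B), which guarantees $\mathrm{Re}(\epsilon_r)>\bar\epsilon_r>0$ a.e., we get $\bar\epsilon_r\|\nabla\phi\|_{\bm L^2(\Omega)}^2\le \mathrm{Re}\,(\epsilon_r\nabla\phi,\nabla\phi)_{\Omega}=0$, hence $\nabla\phi=\bm 0$ and $\phi\equiv 0$. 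Thus $\bm u=\bm 0$, the desired contradiction, and the inequality holds with a constant $C$ depending only on $\Omega$.

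The main obstacle is the identification of the limit field: one must justify that the tangential-trace and weighted-divergence constraints survive the limit (closedness of the relevant operators on $\bm L^2(\Omega)$), and, more substantively, the representation of a curl-free element of $\bm H_0(\mathrm{curl};\Omega)$ as $\nabla\phi$ with $\phi\in H_0^1(\Omega)$, which is precisely where the topological hypotheses on $\Omega$ are used. Everything else --- the normalization, the compactness extraction, and the energy estimate via Assumption~(B) --- is routine. Equivalently, one may phrase the argument through the Peetre--Tartar lemma, with the injectivity of $\bm\nabla\times$ on $\bm H_0(\mathrm{curl};\Omega)\cap\bm H(\mathrm{div}_{\epsilon_r}^0;\Omega)$ taking the place of the contradiction step.
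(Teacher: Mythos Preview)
The paper does not supply its own proof of this lemma; it merely cites \cite[Corollary~4.4]{Hiptmair_electromagnetism_Acta_2002} and states that the proofs of Lemmas~\ref{hip1}--\ref{hip2} can be found there. Your compactness--contradiction argument is correct and is precisely the standard route to such Poincar\'e--Friedrichs inequalities (and, in essence, how the cited reference proceeds): bounded sequence, compact extraction via \Cref{hip3}, identification of a curl-free limit, and then triviality of that limit using the topological hypotheses on $\Omega$ together with the positivity of $\mathrm{Re}(\epsilon_r)$.

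Two small remarks. First, the step ``$\bm u=\nabla\phi$ with $\phi\in H_0^1(\Omega)$'' deserves one more sentence: simple connectedness gives $\bm u=\nabla\phi$ with $\phi\in H^1(\Omega)$, and $\bm n\times\nabla\phi=\bm 0$ on $\partial\Omega$ forces $\phi$ constant on the (connected) boundary, whence one may normalize to $\phi\in H_0^1(\Omega)$. You allude to this, but it is the place where both topological hypotheses are actually consumed. Second, the constant your argument produces depends on $\epsilon_r$ as well as on $\Omega$, since the constraint space $\bm H(\mathrm{div}_{\epsilon_r}^0;\Omega)$ does; the phrasing ``depending on $\Omega$ only'' in the statement is a slight imprecision inherited from the paper, not a flaw in your proof.
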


In \cite{Bonito_Maxwell_regularity_JMAA_2013}, Bonito et al.  proved the following regularity result with real coefficents $\mu_r$ and $\epsilon_r$ which satisfy Assumption \ref{Assumptions_coes}. It is trival to extend the result to complex coefficents and hence the proof is omitted.
\begin{lemma}[{\cite[Theorem 5.1 ]{Bonito_Maxwell_regularity_JMAA_2013}}] \label{Th1}
	Let $(\bm u, p)\in \bm H_0({\rm curl}, \Omega)\times H_0^1(\Omega)$ be the solution of 
	\begin{subequations}\label{simple_mode_k0}
		\begin{align}
		\bm\nabla  \times (\mu_r^{-1}  \bm\nabla\times\bm u)+\overline{\epsilon_r}\nabla p&=\bm f &\textup{ in }\Omega,\\
		\bm \nabla\cdot(\epsilon_r\bm u)&=0 &\textup{ in }\Omega,\\
		\bm n\times\bm u&=\bm 0&\textup{ on }\partial\Omega,\\
		p&=0&\textup{ on }\partial\Omega.
		\end{align}
	\end{subequations}
	If $\mu_r$ and $\epsilon_r$ satisfy  Assumption \ref{Assumptions_coes}, then there exsits $s>0$ such that 
	\begin{align*}
	\|\bm u\|_{\bm H^s(\Omega)}+	\|\bm \nabla \times \bm u\|_{\bm H^s(\Omega)} \le C\|\bm f\|_{\bm L^2(\Omega)},
	\end{align*}
	where the constant $C$ depends on $s$, $\Omega$, $\mu_r$ and $\epsilon_r$.
\end{lemma}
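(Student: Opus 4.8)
The plan is to decouple \eqref{simple_mode_k0} into the scalar elliptic problem solved by $p$ and a magnetostatics-type problem for $\bm u$, extract $\bm L^2$ energy bounds from each, and then upgrade to fractional regularity via the embeddings of \Cref{hip1} and its ``magnetic'' counterpart, being careful that $\mu_r$ may jump across the interfaces $\partial\Omega_i\cap\partial\Omega_j$. First I would isolate $p$: testing the first equation of \eqref{simple_mode_k0} against $\nabla q$ with $q\in H_0^1(\Omega)$ annihilates the curl--curl term and leaves $(\overline{\epsilon_r}\nabla p,\nabla q)_{\bm L^2(\Omega)}=(\bm f,\nabla q)_{\bm L^2(\Omega)}$ for all $q\in H_0^1(\Omega)$. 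By Assumption \ref{Assumptions_coes}(B) the left-hand side is a coercive sesquilinear form on $H_0^1(\Omega)$, so Lax--Milgram yields $\|\nabla p\|_{\bm L^2(\Omega)}\le C\|\bm f\|_{\bm L^2(\Omega)}$. Writing $\bm g:=\bm f-\overline{\epsilon_r}\nabla p$, the same identity says that $(\bm g,\nabla q)_{\bm L^2(\Omega)}=0$ for all $q\in H_0^1(\Omega)$, i.e. $\bm g$ is divergence free with vanishing normal trace, and $\|\bm g\|_{\bm L^2(\Omega)}\le C\|\bm f\|_{\bm L^2(\Omega)}$ by Assumption \ref{Assumptions_coes}(A).

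Next I would obtain the energy bound for $\bm u$. Testing $\bm\nabla\times(\mu_r^{-1}\bm\nabla\times\bm u)=\bm g$ against $\bm u$ and using $\bm u\in\bm H_0({\rm curl};\Omega)$ gives $(\mu_r^{-1}\bm\nabla\times\bm u,\bm\nabla\times\bm u)_{\bm L^2(\Omega)}=(\bm g,\bm u)_{\bm L^2(\Omega)}$; moreover $(\overline{\epsilon_r}\nabla p,\bm u)_{\bm L^2(\Omega)}=0$ after integration by parts because $p=0$ on $\partial\Omega$ and $\bm\nabla\cdot(\epsilon_r\bm u)=0$, so the right-hand side equals $(\bm f,\bm u)_{\bm L^2(\Omega)}$. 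Since ${\rm Re}(\mu_r^{-1})={\rm Re}(\mu_r)/|\mu_r|^2$ is bounded below by a positive constant under Assumption \ref{Assumptions_coes}, taking real parts gives $\|\bm\nabla\times\bm u\|_{\bm L^2(\Omega)}^2\le C\|\bm f\|_{\bm L^2(\Omega)}\|\bm u\|_{\bm L^2(\Omega)}$, and because $\bm u\in\bm H_0({\rm curl};\Omega)\cap\bm H({\rm div}^0_{\epsilon_r};\Omega)$ the Poincar\'e--Friedrichs inequality of \Cref{hip2} upgrades this to $\|\bm u\|_{\bm L^2(\Omega)}+\|\bm\nabla\times\bm u\|_{\bm L^2(\Omega)}\le C\|\bm f\|_{\bm L^2(\Omega)}$. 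Inserting this into \Cref{hip1} already proves $\|\bm u\|_{\bm H^{s}(\Omega)}\le C\|\bm f\|_{\bm L^2(\Omega)}$ for every $s<s_0$.

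It remains to bound $\bm\nabla\times\bm u$ in a fractional norm, and this is where the argument really lives. I would set $\bm w:=\mu_r^{-1}\bm\nabla\times\bm u$. From the PDE, $\bm\nabla\times\bm w=\bm g\in\bm L^2(\Omega)$; moreover $\bm\nabla\cdot(\mu_r\bm w)=\bm\nabla\cdot(\bm\nabla\times\bm u)=0$, and $(\mu_r\bm w,\nabla q)_{\bm L^2(\Omega)}=(\bm\nabla\times\bm u,\nabla q)_{\bm L^2(\Omega)}=0$ for all $q\in H^1(\Omega)$ because $\bm n\times\bm u=\bm 0$, so $\bm\nabla\times\bm u$ has vanishing normal trace and $\bm w$ belongs to $\bm H({\rm curl};\Omega)$, is $\mu_r$-divergence free, and has vanishing $\mu_r$-weighted normal trace. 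The analogue of \Cref{hip1} for this ``magnetic'' space (also contained in \cite{Hiptmair_electromagnetism_Acta_2002}, with the tangential and normal boundary conditions interchanged) then gives $\bm w\in\bm H^{s'}(\Omega)$ for some $s'>0$ with $\|\bm w\|_{\bm H^{s'}(\Omega)}\le C(\|\bm w\|_{\bm L^2(\Omega)}+\|\bm\nabla\times\bm w\|_{\bm L^2(\Omega)})\le C\|\bm f\|_{\bm L^2(\Omega)}$. Finally $\bm\nabla\times\bm u=\mu_r\bm w$ is the product of a function that is $W^{1,\infty}$ on each $\Omega_j$ with a globally $\bm H^{s'}$ function; restricted to $\Omega_j$ it lies in $\bm H^{\min(s',1)}(\Omega_j)$, and since a function that is $\bm H^{\sigma}$ on every Lipschitz subdomain $\Omega_j$ with $0\le\sigma<1/2$ is automatically in $\bm H^{\sigma}(\Omega)$ (no interface matching is needed below $1/2$), we get $\bm\nabla\times\bm u\in\bm H^{s}(\Omega)$ with the stated bound for any $s<\min(s_0,s',1/2)$. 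Choosing such an $s$ finishes the proof.

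The hard part will be this last step: one must identify the correct \emph{normal} boundary condition inherited by $\bm w$ so that the weighted div--curl embedding applies, and one must accept that multiplication by the \emph{jumping} coefficient $\mu_r$ caps the global regularity of $\bm\nabla\times\bm u$ strictly below $1/2$ --- exactly the low-regularity regime that motivates the present paper. A secondary point to check is that the embedding results of \cite{Hiptmair_electromagnetism_Acta_2002} do hold for the weighted spaces built from $\epsilon_r$ and $\mu_r$ under Assumption \ref{Assumptions_coes} only, which is where the piecewise-Lipschitz subdomain structure of $\Omega$ is genuinely used.
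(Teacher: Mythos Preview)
The paper does not prove this lemma: it simply quotes \cite[Theorem~5.1]{Bonito_Maxwell_regularity_JMAA_2013} for real coefficients and remarks that the extension to complex $\mu_r,\epsilon_r$ is trivial. Your proposal therefore goes well beyond the paper by sketching an actual argument, and the outline you give is essentially the standard one underlying the Bonito--Guermond--Luddens result. The decoupling of $p$, the energy estimate via \Cref{hip2}, and the application of \Cref{hip1} to bound $\|\bm u\|_{\bm H^s(\Omega)}$ are all correct. The key remaining step --- passing from $\bm w=\mu_r^{-1}\bm\nabla\times\bm u$ (which lies in the ``magnetic'' regularity space $\bm H({\rm curl};\Omega)\cap\{\nabla\cdot(\mu_r\bm v)=0,\ (\mu_r\bm v)\cdot\bm n|_{\partial\Omega}=0\}$) to $\bm\nabla\times\bm u=\mu_r\bm w\in\bm H^s(\Omega)$ via the piecewise-$W^{1,\infty}$ multiplier argument and the fact that $\prod_j H^\sigma(\Omega_j)\hookrightarrow H^\sigma(\Omega)$ for $\sigma<1/2$ --- is exactly the mechanism that caps $s$ below $1/2$, and you have identified it correctly. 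The only points that would need to be nailed down in a full write-up are the precise reference for the ``magnetic'' analogue of \Cref{hip1} (it is indeed in \cite{Hiptmair_electromagnetism_Acta_2002} and in the work of Weber and of Costabel for Lipschitz domains with piecewise-smooth weights) and a clean citation for the $H^\sigma$ gluing lemma; both are standard, so your sketch is sound.
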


The  {Maxwell system \eqref{Maxwell_PDE_ori}} we concerned with here  is different to the model in equations \eqref{simple_mode_k0}, hence we need to give a rigorous proof for the next results. We first consider the following positive-definite problem.
\begin{lemma}\label{Th2}
	Let $(\bm u, p)$ be the solution of 
	\begin{subequations}\label{simple_mode_kplus}
		\begin{align}
		\bm\nabla  \times (\mu_r^{-1}  \bm\nabla\times\bm u)+\kappa^2\epsilon_r\bm u+\overline{\epsilon_r}\nabla p&=\bm f &\text{in }\Omega,\label{simple_mode_kplus1}\\
		\bm \nabla\cdot(\epsilon_r\bm u)&=0 &\text{in }\Omega,\label{simple_mode_kplus2}\\
		\bm n\times\bm u&=\bm 0&\text{on }\partial\Omega,\label{simple_mode_kplus3}\\
		p&=0&\text{on }\partial\Omega.\label{simple_mode_kplus4}
		\end{align}
	\end{subequations}
	If $\mu_r$ and $\epsilon_r$ satisfy the Assumption \ref{Assumptions_coes}, then \eqref{simple_mode_kplus} has a unique solution $(\bm u,p)\in \bm H_0({\rm curl};\Omega)\times H_0^1(\Omega)$, and there exsits $s>0$ such that 
	\begin{align*}
	\|\bm u\|_{\bm H^s(\Omega)}+	\|\bm \nabla \times \bm u\|_{\bm H^s(\Omega)} \le C\|\bm f\|_{\bm L^2(\Omega)},
	\end{align*}
	where the constant $C$ depends on  {$s$,} $\Omega$, $\mu_r$ and $\epsilon_r$.
\end{lemma}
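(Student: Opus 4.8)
The plan is to split the proof into two independent pieces: an elementary variational argument giving existence, uniqueness and an $\bm H({\rm curl};\Omega)$ a priori bound, using only Assumption~\ref{Assumptions_coes}(B) together with $\kappa>0$; followed by a reduction of the regularity claim to the already quoted Lemma~\ref{Th1} by absorbing the zeroth-order term into the data.

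\emph{Step 1: the multiplier decouples.} Set $\bm X=\bm H_0({\rm curl};\Omega)$ and note $\nabla H_0^1(\Omega)\subset\bm X$ with vanishing curl. Writing the weak form of \eqref{simple_mode_kplus1} and testing against $\nabla q$, $q\in H_0^1(\Omega)$: the curl--curl term vanishes because $\bm\nabla\times\nabla q=\bm 0$, while \eqref{simple_mode_kplus2} forces $\kappa^2(\epsilon_r\bm u,\nabla q)_\Omega=0$, so $p$ necessarily satisfies the decoupled problem $(\overline{\epsilon_r}\nabla p,\nabla q)_\Omega=(\bm f,\nabla q)_\Omega$ for all $q\in H_0^1(\Omega)$. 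Since $\overline{\epsilon_r}\in L^\infty(\Omega)$ and $\mathrm{Re}(\overline{\epsilon_r}\nabla p,\nabla p)_\Omega\ge\bar\epsilon_r\|\nabla p\|_{\bm L^2(\Omega)}^2$ by Assumption~\ref{Assumptions_coes}(B), Lax--Milgram gives a unique $p\in H_0^1(\Omega)$ with $\|p\|_{H^1(\Omega)}\le C\|\bm f\|_{\bm L^2(\Omega)}$.

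\emph{Step 2: solving for $\bm u$.} Put $\bm g:=\bm f-\overline{\epsilon_r}\nabla p\in\bm L^2(\Omega)$, so that $(\bm g,\nabla q)_\Omega=0$ for all $q\in H_0^1(\Omega)$, and on $\bm X_0:=\bm H_0({\rm curl};\Omega)\cap\bm H({\rm div}^0_{\epsilon_r};\Omega)$ introduce $a(\bm w,\bm v):=(\mu_r^{-1}\bm\nabla\times\bm w,\bm\nabla\times\bm v)_\Omega+\kappa^2(\epsilon_r\bm w,\bm v)_\Omega$. Because $\mathrm{Re}(\mu_r)\ge\bar\mu_r$ together with $\mu_r\in L^\infty(\Omega)$ give $\mathrm{Re}(\mu_r^{-1})\ge\bar\mu_r/\|\mu_r\|_{L^\infty(\Omega)}^2=:c_\mu>0$, and $\kappa>0$, the form $a$ is coercive on all of $\bm X$:
\begin{equation*}
\mathrm{Re}\,a(\bm v,\bm v)\ge\min\{c_\mu,\kappa^2\bar\epsilon_r\}\,\|\bm v\|_{\bm H({\rm curl};\Omega)}^2 .
\end{equation*}
Lax--Milgram on $\bm X_0$ then gives a unique $\bm u\in\bm X_0$ with $a(\bm u,\bm v)=(\bm g,\bm v)_\Omega$ for all $\bm v\in\bm X_0$, and $\|\bm u\|_{\bm H({\rm curl};\Omega)}\le C\|\bm f\|_{\bm L^2(\Omega)}$. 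Using the $\epsilon_r$-orthogonal Helmholtz splitting $\bm X=\bm X_0\oplus\nabla H_0^1(\Omega)$ (itself produced by Lax--Milgram for the $\epsilon_r$-weighted Laplacian), one checks that $(\bm u,p)$ solves \eqref{simple_mode_kplus} distributionally: \eqref{simple_mode_kplus2}--\eqref{simple_mode_kplus4} are built in, the weak equation tested against $\nabla H_0^1(\Omega)$ collapses to the defining relation of $p$ from Step~1 (since $(\overline{\epsilon_r}\nabla p,\bm v)_\Omega=\overline{(\epsilon_r\bm v,\nabla p)_\Omega}$ vanishes for $\bm v\in\bm X_0$), and tested against $\bm X_0$ it is the identity just established; in particular $\bm\nabla\times(\mu_r^{-1}\bm\nabla\times\bm u)=\bm f-\kappa^2\epsilon_r\bm u-\overline{\epsilon_r}\nabla p\in\bm L^2(\Omega)$. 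Running the computation backwards shows that any solution of \eqref{simple_mode_kplus} in $\bm H_0({\rm curl};\Omega)\times H_0^1(\Omega)$ has its $p$-component solving the Step~1 problem and its $\bm u$-component solving the $\bm X_0$ problem, whence uniqueness.

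\emph{Step 3: regularity, and the main obstacle.} Set $\bm F:=\bm f-\kappa^2\epsilon_r\bm u\in\bm L^2(\Omega)$. By Step~2, $\|\bm F\|_{\bm L^2(\Omega)}\le\|\bm f\|_{\bm L^2(\Omega)}+\kappa^2\|\epsilon_r\|_{L^\infty(\Omega)}\|\bm u\|_{\bm L^2(\Omega)}\le C\|\bm f\|_{\bm L^2(\Omega)}$, and by \eqref{simple_mode_kplus1} the pair $(\bm u,p)$ is the (unique, by the Poincar\'e--Friedrichs inequality of Lemma~\ref{hip2}) solution of \eqref{simple_mode_k0} with right-hand side $\bm F$. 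Lemma~\ref{Th1} then yields $\|\bm u\|_{\bm H^s(\Omega)}+\|\bm\nabla\times\bm u\|_{\bm H^s(\Omega)}\le C\|\bm F\|_{\bm L^2(\Omega)}\le C\|\bm f\|_{\bm L^2(\Omega)}$, which is the claim. The argument is routine apart from two points requiring care: first, confirming that the separately constructed $\bm u$ and $p$ together solve the \emph{coupled} system, which is exactly where the $\epsilon_r$-orthogonal decomposition of $\bm H_0({\rm curl};\Omega)$ is used; and second, checking that absorbing $\kappa^2\epsilon_r\bm u$ into the data of \eqref{simple_mode_k0} costs no regularity --- this is ensured by the a priori bound of Step~2, which keeps the shifted right-hand side in $\bm L^2(\Omega)$ and controlled by $\|\bm f\|_{\bm L^2(\Omega)}$, so that Lemma~\ref{Th1} delivers the stated dependence of the constant.
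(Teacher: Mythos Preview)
Your proof is correct and reaches the regularity estimate by the same reduction to Lemma~\ref{Th1} that the paper uses, but you handle existence, uniqueness and the a~priori bound differently. The paper treats \eqref{simple_mode_kplus} as a saddle-point problem: it verifies the inf-sup condition for the pressure pairing by testing with $\bm w=\nabla p$, invokes the Poincar\'e--Friedrichs inequality (Lemma~\ref{hip2}) to obtain coercivity of the curl--curl form on the divergence-free kernel, and then appeals to LBB theory; the $L^2$ bound on $\bm u$ follows from the energy identity together with Lemma~\ref{hip2} again, giving a constant independent of $\kappa$ at that stage. You instead decouple $p$ by testing with gradients, solve for it by Lax--Milgram, and then solve for $\bm u$ on $\bm X_0$ by a second Lax--Milgram argument whose coercivity comes directly from the positive term $\kappa^2(\epsilon_r\bm u,\bm u)_\Omega$. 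Your route is more elementary in that it avoids mixed theory entirely, at the price of a coercivity constant that degenerates as $\kappa\to 0$; the paper's choice keeps the $\bm H({\rm curl})$ bound $\kappa$-uniform (though the final regularity constant still picks up $\kappa$ through $\bm F=\bm f-\kappa^2\epsilon_r\bm u$ in both arguments). Either approach is acceptable for the lemma as stated.
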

\begin{proof}
	We write  \eqref{simple_mode_kplus} in the following varational form: find $(\bm u,p)\in \bm H_0({\rm curl};\Omega)\times H_0^1(\Omega)$ such that
	\begin{align*}
	(\mu_r^{-1}\bm\nabla\times\bm u,\bm\nabla\times\bm v)_{\bm L^2(\Omega)}+
	\kappa^2(\epsilon_r\bm u,\bm v)_{\bm L^2(\Omega)}
	+
	(\nabla p,\epsilon_r\bm v)_{\bm L^2(\Omega)}&=(\bm f,\bm v)_{\bm L^2(\Omega)},\\
	(\epsilon_r\bm u,\nabla q)_{\bm L^2(\Omega)}&=0
	\end{align*} 
	for all $(\bm v,q)\in \bm H_0({\rm curl};\Omega)\times H_0^1(\Omega)$.  The following inf-sup condition holds  simply by taking $\bm w=\nabla p$  {above}:
	\begin{align}\label{inf}
	\sup_{\bm 0\neq \bm w\in\bm H_0({\rm curl};\Omega)}\frac{{\rm Re}~[(\overline{\epsilon_r}\nabla p,\bm w)_{\bm L^2(\Omega)}]}{\|\bm w\|_{\bm L^2(\Omega)}}\ge C\|\nabla p\|_{\bm L^2(\Omega)}.
	\end{align}
	By   \eqref{inf}, the uniqueness  {of $p$} follows immediately. In addition the following coercivity estimate holds  {using \eqref{hip2} and the fact that $\nabla\cdot(\epsilon_r\bm u)=0$:}
	\begin{align}\label{coer}
	\begin{split}
	\hspace{1em}&\hspace{-1em}{\rm Re}~[(\mu_r^{-1}\bm\nabla\times\bm u,\bm\nabla\times\bm u)_{\bm L^2(\Omega)}+
	\kappa^2(\epsilon_r\bm u,\bm u)_{\bm L^2(\Omega)}]\\
	&\ge {\rm Re}~[(\mu_r^{-1}\bm\nabla\times\bm u,\bm\nabla\times\bm u)_{\bm L^2(\Omega)}]\\
	&\ge C\|\bm u\|_{\bm L^2(\Omega)}^2
	\end{split}
	\end{align}
	for all $\bm u\in \bm H_0({\rm curl};\Omega)\cap \bm H({{\rm div}^0_{\epsilon_r}};\Omega)$. Then the existence of a solution follows by the standard LBB theory~\cite{bre94}.
	
	We test \eqref{simple_mode_kplus1} with $\bm u$ and $\eqref{simple_mode_kplus2}$ with $p$, add them together, and take the real part to get
	\begin{align*}
	\left\|\sqrt{{\rm Re}~(\mu_r^{-1})}~ \bm \nabla\times\bm u\right\|^2_{\bm L^2(\Omega)}
	+\kappa^2\|\sqrt{{\rm Re}~(\epsilon_r)}~\bm u\|^2_{\bm L^2(\Omega)}={\rm Re}~ (\bm f,\bm u)_{\bm L^2(\Omega)}.
	\end{align*}
	By  the Cauchy-Schwarz's inequality and the \Cref{hip2},  we have  
	\begin{align}\label{sta-u}
	\|\bm u\|_{\bm L^2(\Omega)}\le C\|\bm f\|_{\bm L^2(\Omega)}.
	\end{align}
	{To obtain a regularity result, we then} rewrite \eqref{simple_mode_kplus} as the following	
	\begin{align*}
	\bm\nabla  \times (\mu_r^{-1}  \bm\nabla\times\bm u) + \overline{\epsilon_r}\nabla p&=\bm f-\kappa^2\epsilon_r\bm u&\textup{in }\Omega,\\
	\bm \nabla\cdot(\epsilon_r\bm u)&=0 &\textup{in }\Omega,\\
	\bm n\times\bm u&=\bm 0&\textup{on }\partial\Omega,\\
	p&=0&\textup{on }\partial\Omega.
	\end{align*}
	By \Cref{Th1},  there exists $s>0$ such that
	\begin{align*}
	\|\bm u\|_{\bm H^s(\Omega)}+	\|\bm \nabla \times \bm u\|_{\bm H^s(\Omega)} \le C\|\bm f-2\kappa^2\epsilon_r\bm u\|_{\bm L^2(\Omega)}.
	\end{align*}
	This, combined with   \eqref{sta-u}, finishes our proof.
\end{proof}
{Next we verify that, under a non-resonance condition, the solution of \eqref{simple_mode_k0} has similar regularity estimates to the positive definite case discssed above:}
\begin{theorem}\label{final_regu1}
	{Suppose that} $\mu_r$ and $\epsilon_r$ satisfy  Assumption \ref{Assumptions_coes} and that  $\kappa^2>0$ is not an eigenvalue of the problem of finding $ {\bm w}\in \bm H_0({\rm curl}, \Omega)$, $\bm w\not=0$, such that
	\begin{align}\label{maxwell_eig}
	\bm \nabla\times(\mu_r^{-1}\bm \nabla\times {\bm w})=\kappa^2\epsilon_r {\bm w}.
	\end{align}
	Then  \eqref{Maxwell_PDE_ori} has a unique solution $\bm u\in \bm H_0({\rm curl};\Omega)$ and there exists $s>0$ such that 
	\begin{align*}
	\|\bm u\|_{\bm H^s(\Omega)}+	\|\bm \nabla \times \bm u\|_{\bm H^s(\Omega)} \le C\|\bm f\|_{\bm L^2(\Omega)},
	\end{align*}
	where the constant $C$ depends on  {$s$,} $\Omega$, $\mu_r$, $\epsilon_r$ and $\kappa$.
\end{theorem}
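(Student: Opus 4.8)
The plan is to prove \Cref{final_regu1} by a Fredholm-alternative argument resting on the positive-definite result \Cref{Th2}, and then to recover the $\bm H^s$ regularity by a bootstrap through \Cref{Th1}. First I would dispose of the Lagrange multiplier: testing \eqref{Maxwell_PDE_ori1} against $\bm\nabla q$ with $q\in H_0^1(\Omega)$ and using $\bm\nabla\cdot(\epsilon_r\bm u)=\rho=0$ gives, as observed in the introduction, the decoupled problem $(\overline{\epsilon_r}\bm\nabla p,\bm\nabla q)_{\bm L^2(\Omega)}=(\bm f,\bm\nabla q)_{\bm L^2(\Omega)}$ for all $q\in H_0^1(\Omega)$, which is coercive by Assumption~\ref{Assumptions_coes}(B); hence $p$ exists uniquely with $\|\bm\nabla p\|_{\bm L^2(\Omega)}\le C\|\bm f\|_{\bm L^2(\Omega)}$ (here in fact $p=0$, in view of the charge-conservation relation). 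It then remains to find $\bm u\in\bm H_0({\rm curl};\Omega)$ with
\[
(\mu_r^{-1}\bm\nabla\times\bm u,\bm\nabla\times\bm v)_{\bm L^2(\Omega)}-\kappa^2(\epsilon_r\bm u,\bm v)_{\bm L^2(\Omega)}=(\bm f-\overline{\epsilon_r}\bm\nabla p,\bm v)_{\bm L^2(\Omega)}\qquad\forall\,\bm v\in\bm H_0({\rm curl};\Omega).
\]
Taking $\bm v=\bm\nabla q$ and using the equation for $p$ forces $(\epsilon_r\bm u,\bm\nabla q)_{\bm L^2(\Omega)}=0$ (this uses $\kappa>0$), so any solution automatically lies in $\bm H({\rm div}^0_{\epsilon_r};\Omega)$; using the $\epsilon_r$-weighted Helmholtz splitting $\bm H_0({\rm curl};\Omega)=X\oplus\bm\nabla H_0^1(\Omega)$ with $X:=\bm H_0({\rm curl};\Omega)\cap\bm H({\rm div}^0_{\epsilon_r};\Omega)$, and the fact that both sides of the identity annihilate $\bm\nabla H_0^1(\Omega)$ when $\bm u\in X$, the problem is equivalent to its restriction to $X$.

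On $X$ I would split the form as $a_+(\bm u,\bm v)-2\kappa^2(\epsilon_r\bm u,\bm v)_{\bm L^2(\Omega)}$ with $a_+(\bm u,\bm v):=(\mu_r^{-1}\bm\nabla\times\bm u,\bm\nabla\times\bm v)_{\bm L^2(\Omega)}+\kappa^2(\epsilon_r\bm u,\bm v)_{\bm L^2(\Omega)}$. Since ${\rm Re}(\epsilon_r)>\bar\epsilon_r$ and ${\rm Re}(\mu_r^{-1})={\rm Re}(\mu_r)/|\mu_r|^2$ is bounded below by a positive constant (by Assumption~\ref{Assumptions_coes}), $a_+$ is bounded and coercive on $\bm H_0({\rm curl};\Omega)$ — the $\kappa^2(\epsilon_r\cdot,\cdot)$ term controls the $\bm L^2(\Omega)$ part and the curl term the rest — hence an isomorphism of $X$ onto its dual; and the operator on $X$ induced by $(\bm u,\bm v)\mapsto(\epsilon_r\bm u,\bm v)_{\bm L^2(\Omega)}$ is \emph{compact}, because it factors through the embedding $X\hookrightarrow\bm L^2(\Omega)$, which is compact by \Cref{hip3}. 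The Fredholm alternative then says the $X$-problem is well posed if and only if its homogeneous version ($\bm f=0$) has only the trivial solution.

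For uniqueness, a homogeneous solution $\bm u\in X$ satisfies $(\mu_r^{-1}\bm\nabla\times\bm u,\bm\nabla\times\bm v)_{\bm L^2(\Omega)}=\kappa^2(\epsilon_r\bm u,\bm v)_{\bm L^2(\Omega)}$ for all $\bm v\in\bm H_0({\rm curl};\Omega)$, i.e.\ it is a weak eigenfunction of \eqref{maxwell_eig} with eigenvalue $\kappa^2$; since $\kappa^2$ is assumed not to be an eigenvalue, $\bm u=0$. This proves existence and uniqueness of $\bm u\in\bm H_0({\rm curl};\Omega)$, and the open mapping theorem then gives $\|\bm u\|_{\bm L^2(\Omega)}\le\|\bm u\|_{\bm H({\rm curl};\Omega)}\le C\|\bm f-\overline{\epsilon_r}\bm\nabla p\|_{\bm L^2(\Omega)}\le C\|\bm f\|_{\bm L^2(\Omega)}$ (with $C$ now depending on $\kappa$). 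To finish, I would rewrite the equation as $\bm\nabla\times(\mu_r^{-1}\bm\nabla\times\bm u)+\overline{\epsilon_r}\bm\nabla p=\bm f+\kappa^2\epsilon_r\bm u$, with $\bm\nabla\cdot(\epsilon_r\bm u)=0$ and the boundary conditions of \eqref{simple_mode_k0}; since $\bm f+\kappa^2\epsilon_r\bm u\in\bm L^2(\Omega)$ and $(\bm u,p)$ is the solution of \eqref{simple_mode_k0} with this datum, \Cref{Th1} yields $\|\bm u\|_{\bm H^s(\Omega)}+\|\bm\nabla\times\bm u\|_{\bm H^s(\Omega)}\le C\|\bm f+\kappa^2\epsilon_r\bm u\|_{\bm L^2(\Omega)}\le C(\|\bm f\|_{\bm L^2(\Omega)}+\|\bm u\|_{\bm L^2(\Omega)})\le C\|\bm f\|_{\bm L^2(\Omega)}$, which is the claim.

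The main obstacle is that neither coercivity of the curl-curl form nor compactness of $\bm H_0({\rm curl};\Omega)\hookrightarrow\bm L^2(\Omega)$ holds on the full space, because the kernel $\bm\nabla H_0^1(\Omega)$ is infinite dimensional, so one cannot run the Fredholm argument there directly; the coercivity of $a_+$ and the compactness of the zeroth-order perturbation are available only after the reduction to $X$, and the technical care lies in verifying that this reduction is genuinely equivalent to the original system. The deeper analytic inputs — the compact embedding of $X$ into $\bm L^2(\Omega)$ and the $\bm H^s$ shift, both delicate under the piecewise-smooth coefficients of Assumption~\ref{Assumptions_coes} — are precisely what \Cref{hip3} and \Cref{Th1} supply.
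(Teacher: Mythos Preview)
Your proof is correct. Both your argument and the paper's rest on the same three pillars---the Fredholm alternative driven by the compact embedding of \Cref{hip3}, injectivity from the non-eigenvalue hypothesis, and a regularity bootstrap through \Cref{Th1}/\Cref{Th2}---but the Fredholm structure is packaged differently. You reduce to the divergence-free subspace $X=\bm H_0({\rm curl};\Omega)\cap\bm H({\rm div}^0_{\epsilon_r};\Omega)$ and write the problem there as a coercive sesquilinear form $a_+$ plus a compact zeroth-order perturbation, which is the textbook variational route. The paper instead works in $\bm L^2(\Omega)$: it first solves the positive-definite problem of \Cref{Th2} to obtain an auxiliary $\widetilde{\bm u}$, then defines an explicit compact operator $\bm K_\kappa$ on $\bm L^2(\Omega)$ (the solution map of \Cref{Th2} with source $-2\kappa^2\epsilon_r\bm w$), and recovers $\bm u=(\bm I+\bm K_\kappa)^{-1}\widetilde{\bm u}$. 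Your approach is slightly more direct and avoids the auxiliary operator; the paper's has the minor convenience that the $\bm H^s$ regularity of $\widetilde{\bm u}$ and $\bm K_\kappa\bm u$ is already supplied by \Cref{Th2}, so the final estimate follows by adding those bounds rather than by a separate appeal to \Cref{Th1}.
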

\begin{proof}
	First, by \Cref{Th2} we know there exists a unique $(\widetilde{\bm u}, {\tilde p})\in \bm H_0({\rm curl};\Omega)\times H^1_0(\Omega)$ such that
	\begin{align*}
	\bm\nabla  \times (\mu_r^{-1}  \bm\nabla\times\widetilde{\bm u})+\kappa^2\epsilon_r\widetilde{\bm u}+\overline{\epsilon_r}\nabla \tilde{p}&=\bm f &\textup{in }\Omega,\\
	\bm \nabla\cdot(\epsilon_r\widetilde{\bm u})&=0 &\textup{in }\Omega,\\
	\bm n\times\widetilde{\bm u}&=\bm 0&\textup{on }\partial\Omega,\\
	\tilde{p}&=0&\textup{on }\partial\Omega,
	\end{align*}
	and
	\begin{align}\label{sta1}
	\|\widetilde{\bm u}\|_{\bm H^s(\Omega)}+	\|\bm \nabla \times \widetilde{\bm u}\|_{\bm H^s(\Omega)} \le C\|\bm f\|_{\bm L^2(\Omega)}.
	\end{align}
	{Furthermore, recalling that we assumed $\rho=0$ in \eqref{Maxwell_PDE_ori2}, we have $\tilde{p}=p$, where $p$ is Lagrange multiplier in \eqref{Maxwell_PDE_ori1} since $\nabla\cdot\overline{\epsilon_r}\nabla p=\nabla\cdot\bm f$ in $\Omega$ and $p=0$ on $\partial \Omega$, with the same equation satisfied by $\tilde{p}$.}
	
	Next, by  {\Cref{Th2}}, for $\bm w\in \bm H({\rm div}_{\epsilon_r}^0;\Omega)$, there exists a unique  {solution denoted} $\bm{K}_{\kappa}\bm w\in \bm H_0({\rm curl};\Omega)$ and $\chi\in H_0^1(\Omega)$ such that
	\begin{subequations}\label{sta20}
		\begin{align}
		\bm\nabla  \times (\mu_r^{-1}  \bm\nabla\times\bm K_{\kappa}\bm w)+\kappa^2\epsilon_r\bm K_{\kappa}\bm w+\overline{\epsilon_r}\nabla\chi&=-2\kappa^2\epsilon_r\bm w&\textup{in }\Omega,\\
		\bm \nabla\cdot(\epsilon_r\bm K_{\kappa}\bm w)&=0,&\textup{in }\Omega,\\
		\bm n\times\bm K_{\kappa}{\bm w}&=\bm 0&\textup{on }\partial\Omega,\\
		\chi&=0&\textup{on }\partial\Omega.
		\end{align}
	\end{subequations}
	Furthermore, since $\nabla\cdot\epsilon_r\bm K_{\kappa}\bm w=0$ we have $\chi=0$ and 
	\begin{align}\label{sta2}
	\|\bm K_{\kappa}\bm w\|_{\bm H^s(\Omega)}+	\|\bm \nabla \times \bm K_{\kappa}\bm w\|_{\bm H^s(\Omega)} \le C_{\kappa}\|\bm w\|_{\bm L^2(\Omega)}.
	\end{align}
	{It is obvious that, if it exists, $\bm u$ must satisfy}
	\begin{align*}
	(\bm I+\bm{K}_{\kappa})\bm u=\widetilde{\bm u}.
	\end{align*}
	{We now prove that the above equation has a unique solution.}  {Let $\{\bm{w}_n\}$ in $\bm L^2(\Omega)$ be a bounded set,   then} \eqref{sta20} and \eqref{sta2} imply that  $\{\bm{K}_{\kappa}\bm{w}_n\}$ is bounded in $\bm H_0({\rm curl};\Omega)\cap\bm H({\rm div}_{\epsilon_r}^0;\Omega)$. Moverover, this set is compact  in $\bm L^2(\Omega)$ by \Cref{hip3}. This proves $\bm{K}_{\kappa}$ is a compact operator from $\bm L^2(\Omega)$ to $\bm L^2(\Omega)$.
	
	Next, we prove that ${\rm ker} \{\bm I+\bm K_{\kappa}\} =0$.  {Let $\bm w\in \bm L^2(\Omega)$ satisfy}
	\begin{align*}
	(\bm I+\bm{K}_{\kappa})\bm w=\bm 0.
	\end{align*}
	{Since $\nabla\cdot\overline{\epsilon_r}\bm{K}_{\kappa}\bm w=0$, this implies $\bm w\in \bm H({\rm div}_{\epsilon_r}^0;\Omega)$ and that it satsifies}
	\begin{align*}
	\bm\nabla  \times (\mu_r^{-1}  \bm\nabla\times\bm w)+\kappa^2\epsilon_r\bm w=-	\bm\nabla  \times (\mu_r^{-1}  \bm\nabla\times\bm K_{\kappa}\bm w)-\kappa^2\epsilon_r\bm K_{\kappa}\bm w=2\kappa^2\epsilon_r\bm w.
	\end{align*}
	Then 
	\begin{align*}
	\bm\nabla  \times (\mu_r^{-1}  \bm\nabla\times\bm w)=\kappa^2\epsilon_r\bm w.
	\end{align*}
	Since $\kappa^2$ is not the eigenvalue of  \eqref{maxwell_eig}, $\bm w=\bm 0$ in $\Omega$ and so
	${\rm ker} \{\bm I+\bm K_{\kappa}\} =0$.
	
	{Having verified that $\bm{K}_{\kappa}$ is compact and $\bm I +\bm{K}_\kappa$ is injective on $\bm L^2(\Omega)$, }
	by the Fredholm alternative theorem~\cite{kress89} we know $(\bm I+\bm{K}_{\kappa})$ is invertible. Hence
	\begin{align}\label{final_regu1_proof1}
	\bm u=(\bm I+\bm K_{\kappa})^{-1}\widetilde{\bm u},
	\end{align}
	and
	\begin{align}\label{final_regu1_proof2}
	\|\bm u\|_{\bm L^2(\Omega)}\le 
	C_{\kappa}\|\widetilde{\bm u}\|_{\bm L^2(\Omega)}.
	\end{align}
	We use  \eqref{sta1}, \eqref{sta2} with $\bm w=\bm u$ to get
	\begin{align*}
	\hspace{1em}&\hspace{-1em}\| \bm u\|_{\bm H^s(\Omega)}+\|\bm \nabla \times \bm u\|_{\bm H^s(\Omega)} \\
	& =  \|\widetilde{\bm u}-\bm K_{\kappa}\bm u\|_{\bm H^s(\Omega)}+	\|\bm \nabla \times (\widetilde{\bm u}-\bm K_{\kappa}\bm u)\|_{\bm H^s(\Omega)} &\textup{ by } \eqref{final_regu1_proof1}\\
	%
	%
	%
	%
	%
	%
	&\le C_{\kappa}	\left(\|\bm f\|_{\bm L^2(\Omega)}+\|\bm u\|_{\bm L^2(\Omega)}\right) &\textup{ by } \eqref{sta1}\textup{ and } \eqref{sta2}\\
	&\le C_{\kappa}	\left(\|\bm f\|_{\bm L^2(\Omega)}+\|\widetilde{\bm u}\|_{\bm L^2(\Omega)}\right) &\textup{ by } \eqref{final_regu1_proof2}\\
	&\le C_{\kappa}	\|\bm f\|_{\bm L^2(\Omega)}&\textup{ by } \eqref{sta1}.
	\end{align*}
	This finishes our proof.
\end{proof}

We test \eqref{Maxwell_PDE_ori1} with $\nabla q$ $(q\in H_0^1(\Omega))$ and use the usual
regularity result for a  second order elliptic problem to get the following result.

\begin{corollary} 
	If $\mu_r$ and $\epsilon_r$ satisfy the Assumption  \ref{Assumptions_coes}, $\bm f\in \bm H({\rm div};\Omega)$. Then there is $s>0$ such that   \eqref{Maxwell_PDE_ori} has a unique solution $p\in H_0^1(\Omega)$. Moreover, the 
	following regularity result holds,
	\begin{align}
	\|p\|_{ H^{1+s}(\Omega)}\le C\|\bm\nabla\cdot\bm f\|_{ L^2(\Omega)},
	\end{align}
	where the constant $C$ depends on $s$, $\Omega$ and $\epsilon_r$. 
\end{corollary}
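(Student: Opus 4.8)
The plan is to make rigorous the decoupling of the Lagrange multiplier $p$ already announced in the introduction, to solve the resulting scalar problem by Lax--Milgram, and then to import a standard elliptic regularity result for transmission problems.

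First I would make the decoupling precise. For $q\in H_0^1(\Omega)$ we have $\nabla q\in\bm H_0({\rm curl};\Omega)$ with $\bm\nabla\times\nabla q=\bm 0$, so testing the weak form of \eqref{Maxwell_PDE_ori1} with $\bm v=\nabla q$ kills the curl--curl term; the term $\kappa^2(\epsilon_r\bm u,\nabla q)_{\bm L^2(\Omega)}$ vanishes after an integration by parts, legitimate since $\epsilon_r\bm u\in\bm H({\rm div};\Omega)$, using $\bm\nabla\cdot(\epsilon_r\bm u)=0$ (recall we assumed $\rho=0$) and $q=0$ on $\partial\Omega$. What remains is
\begin{align*}
(\overline{\epsilon_r}\nabla p,\nabla q)_{\bm L^2(\Omega)}=(\bm f,\nabla q)_{\bm L^2(\Omega)}=-(\bm\nabla\cdot\bm f,q)_{L^2(\Omega)}\qquad\text{for all }q\in H_0^1(\Omega),
\end{align*}
where the last equality uses $\bm f\in\bm H({\rm div};\Omega)$ and $q|_{\partial\Omega}=0$. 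Hence $p$ is the weak solution of $-\bm\nabla\cdot(\overline{\epsilon_r}\nabla p)=-\bm\nabla\cdot\bm f$ in $\Omega$, $p=0$ on $\partial\Omega$, a problem that no longer involves $\bm u$. Next I would settle well-posedness: the sesquilinear form $a(p,q):=(\overline{\epsilon_r}\nabla p,\nabla q)_{\bm L^2(\Omega)}$ is bounded on $H_0^1(\Omega)$ since $\epsilon_r\in L^\infty(\Omega)$ by Assumption \ref{Assumptions_coes}(A), and ${\rm Re}~a(q,q)=({\rm Re}(\epsilon_r)\nabla q,\nabla q)_{\bm L^2(\Omega)}\ge\bar\epsilon_r\|\nabla q\|^2_{\bm L^2(\Omega)}$ by Assumption \ref{Assumptions_coes}(B), which with the Poincar\'e inequality gives coercivity. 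Lax--Milgram then produces a unique $p\in H_0^1(\Omega)$ with $\|p\|_{H^1(\Omega)}\le C\|\bm\nabla\cdot\bm f\|_{L^2(\Omega)}$; since the $p$-component of any solution of \eqref{Maxwell_PDE_ori} satisfies the variational identity above, this also yields the claimed uniqueness of $p$.

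The last, and only substantive, step is the regularity pickup. The coefficient $\overline{\epsilon_r}$ is piecewise $W^{1,\infty}$ across the Lipschitz partition $\{\Omega_j\}_{j=1}^N$ of Assumption \ref{Assumptions_coes}(A) and has uniformly positive real part by Assumption \ref{Assumptions_coes}(B), and $\Omega$ is a Lipschitz polyhedron; the standard $H^{1+s}$ regularity theory for such second-order transmission (interface) problems with an $L^2$ right-hand side --- the scalar analogue of \Cref{Th1}, as used for instance in \cite{Bonito_Maxwell_regularity_JMAA_2013} --- then supplies an exponent $s>0$ depending only on $\Omega$, the partition and $\epsilon_r$ such that $p\in H^{1+s}(\Omega)$ and $\|p\|_{H^{1+s}(\Omega)}\le C\|\bm\nabla\cdot\bm f\|_{L^2(\Omega)}$. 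The passage from real to complex $\epsilon_r$ is routine and can be handled exactly as the authors did for the Maxwell system, and one may shrink $s$ so that it does not exceed the exponent appearing in \Cref{final_regu1}.

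Everything except that citation is elementary (integration by parts, Lax--Milgram, Poincar\'e), so the main --- and rather minor --- obstacle is simply locating and invoking the correct scalar elliptic transmission-regularity theorem on a Lipschitz polyhedron with piecewise-smooth (possibly complex) coefficient; no new technique is needed beyond what \Cref{Th1} already encapsulates in the vector-valued setting.
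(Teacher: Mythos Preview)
Your proposal is correct and takes essentially the same approach as the paper: the paper's proof is a one-line remark (``test \eqref{Maxwell_PDE_ori1} with $\nabla q$ and use the usual regularity result for a second order elliptic problem''), and you have carefully fleshed out exactly that argument --- the decoupling via $\bm v=\nabla q$, Lax--Milgram for well-posedness, and the appeal to standard $H^{1+s}$ transmission regularity for the resulting scalar problem.
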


\subsection{Mixed formulation of Maxwell's equations}

To give our HDG formulation for Maxwell's equations, we need to rewrite them into a mixed form. This is rather standard, and  {we introduce a variable $\bm q$ which is just the scaled magnetic field in electromagnetism.} Let $ \bm q= \mu_r^{-1}\bm \nabla\times\bm u$ in  \eqref{Maxwell_PDE_ori} to get the following mixed form
\begin{subequations}\label{Maxwell_Mixed}
	\begin{align}
	\mu_r \bm q-\bm\nabla\times\bm u&=\bm 0&\textup{ in }\Omega,\label{Maxwell_Mixed1}\\
	\bm\nabla\times\bm q-\kappa^2\epsilon_r\bm u+\overline{\epsilon_r}\nabla p&=\bm f &\textup{ in }\Omega,\label{Maxwell_Mixed2}\\
	\bm \nabla\cdot(\epsilon_r\bm u)&=0&\textup{ in }\Omega,\label{Maxwell_Mixed3}\\
	\bm n\times\bm u&=\bm 0&\textup{ on }\partial\Omega,\label{Maxwell_Mixed4}\\
	p&=0&\textup{ on }\partial\Omega.\label{Maxwell_Mixed5}
	\end{align}
\end{subequations}
Then the mixed weak form of  \eqref{Maxwell_Mixed} is given as follows: find  $(\bm q, \bm u, p)\in \bm L^2(\Omega)\times  \bm H_0({\rm curl};\Omega)\times H^1_0(\Omega)$ such that
\begin{subequations}\label{Maxwell_Mixed_weak}
	\begin{align}
	(\mu_r \bm q, \bm r)_{\bm L^2(\Omega)}-(\bm\nabla\times \bm u, \bm r)_{\bm L^2(\Omega)}&=0, \label{Maxwell_Mixed_weak1}\\
	(\bm q,\bm\nabla\times\bm v)_{\bm L^2(\Omega)}-(\kappa^2\epsilon_r\bm u,\bm v)_{\bm L^2(\Omega)}+(\ \nabla p,{\epsilon_r}\bm v)_{\bm L^2(\Omega)}&=(\bm f,\bm v)_{\bm L^2(\Omega)}, \label{Maxwell_Mixed_weak2}\\
	(\epsilon_r\bm u,\nabla \chi)_{\bm L^2(\Omega)}&=0 \label{Maxwell_Mixed_weak3}
	\end{align}
	for all $(\bm r, \bm v, \chi)\in \bm L^2(\Omega)\times  \bm H_0({\rm curl};\Omega)\times H^1_0(\Omega)$.
\end{subequations}

To shorten lengthy equations,  for all $(\bm q, \bm u, p), (\bm r,\bm v,\chi)\in \bm L^2(\Omega)\times  \bm H_0({\rm curl};\Omega)\times H^1_0(\Omega)$,  we define  {the sesquilinear form}  $\mathscr B^-$ by
\begin{align}\label{def_B}
\begin{split}
\hspace{1em}&\hspace{-1em}\mathscr B^- (\bm q,\bm u, p; \bm r,\bm v,\chi)\\
&=(\mu_r \bm q, \bm r)_{\bm L^2(\Omega)}-(\bm \nabla \times\bm u, \bm r)_{\bm L^2(\Omega)}+(\bm q,\bm\nabla\times\bm v)_{\bm L^2(\Omega)}\\
&\quad 
+(\bar\epsilon_r\nabla p,\bm v)_{\bm L^2(\Omega)} -(\epsilon_r\bm u, \nabla \chi)_{\bm L^2(\Omega)}- (\kappa^2\epsilon_r\bm u,\bm v)_{\bm L^2(\Omega)}.
\end{split}
\end{align}

By the definition of $\mathscr B^-$ in   \eqref{def_B}, we can write the mixed weak form of  \eqref{Maxwell_Mixed} as follows: find $(\bm q,\bm u,p)\in \bm L^2(\Omega)\times  \bm H_0({\rm curl};\Omega)\times H^1_0(\Omega)$ such that 
\begin{align}\label{con_pro}
\mathscr B^- (\bm q,\bm u, p; \bm r,\bm v,\chi) = (\bm f, \bm v)_{\bm L^2(\Omega)}
\end{align}
for all $(\bm r,\bm v,\chi)\in \bm L^2(\Omega)\times \bm H_0({\rm curl};\Omega) \times H^1_0(\Omega)$.

Next, for all $(\bm q, \bm u,p)\in \bm H({\rm curl};\Omega)\times \bm H_0({\rm curl};\Omega)
\times H^1_0(\Omega)$,  we define the semi-norm $\vertiii{\cdot}$ by
\begin{align}\label{trip_morm_defc}
\begin{split}
\vertiii{(\bm q, \bm u, p)} {^2} &= \|\sqrt{{\rm{Re}}~(\mu_r)}~\bm q\|_{\bm L^2(\Omega)}^2 +\|\sqrt{{\rm{Re}}~(\epsilon_r)}~ \bm u \|_{\bm L^2(\Omega)}^2\\
&\quad + \|\bm \nabla\times\bm u\|_{\bm L^2(\Omega)}^2+\|\sqrt{{\rm{Re}}~(\epsilon_r)}~\nabla p\|_{\bm L^2(\Omega)}^2,
\end{split}
\end{align}
where $\mu_r$,  and $\epsilon_r$  {satisfy}  Assumption \ref{Assumptions_coes} and $\kappa>0$. It is easy  to prove that the semi-norm $\vertiii{\cdot}$ is actually a norm.
\begin{lemma}
	$\vertiii{\cdot}$ defines a norm on the space $ \bm L^2(\Omega)\times \bm H_0({\rm curl};\Omega)\times H^1_0(\Omega) $.
\end{lemma}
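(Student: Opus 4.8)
The plan is to verify the three defining properties of a norm on the product space $\bm L^2(\Omega)\times\bm H_0({\rm curl};\Omega)\times H^1_0(\Omega)$: homogeneity, the triangle inequality, and positive-definiteness. Only the last requires any argument, and it rests on the uniform lower bounds in Assumption~\ref{Assumptions_coes}(B) together with the zero boundary trace of $p$.

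First I would observe that $\vertiii{\cdot}$ is manifestly a seminorm. Since ${\rm Re}(\mu_r)$ and ${\rm Re}(\epsilon_r)$ are bounded (Assumption~\ref{Assumptions_coes}(A)) and bounded below by positive constants (Assumption~\ref{Assumptions_coes}(B)), the weighted forms $({\rm Re}(\mu_r)\,\cdot\,,\cdot)_{\bm L^2(\Omega)}$ and $({\rm Re}(\epsilon_r)\,\cdot\,,\cdot)_{\bm L^2(\Omega)}$ are genuine complex inner products, and the map
\[
(\bm q,\bm u,p)\longmapsto\bigl(\sqrt{{\rm Re}(\mu_r)}\,\bm q,\ \sqrt{{\rm Re}(\epsilon_r)}\,\bm u,\ \bm\nabla\times\bm u,\ \sqrt{{\rm Re}(\epsilon_r)}\,\nabla p\bigr)
\]
is linear from $\bm L^2(\Omega)\times\bm H_0({\rm curl};\Omega)\times H^1_0(\Omega)$ into $\bm L^2(\Omega)^4$. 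Then $\vertiii{(\bm q,\bm u,p)}$ is just the $\bm L^2$-norm of the image of this linear map, so homogeneity and the triangle inequality are inherited from the $\bm L^2$ norm.

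It remains to prove definiteness. Suppose $\vertiii{(\bm q,\bm u,p)}=0$; then each of the four nonnegative terms in \eqref{trip_morm_defc} vanishes. By Assumption~\ref{Assumptions_coes}(B), ${\rm Re}(\mu_r)\ge\bar\mu_r>0$ a.e.\ in $\Omega$, so $\bar\mu_r\|\bm q\|_{\bm L^2(\Omega)}^2\le\|\sqrt{{\rm Re}(\mu_r)}\,\bm q\|_{\bm L^2(\Omega)}^2=0$, hence $\bm q=\bm 0$; the same argument with ${\rm Re}(\epsilon_r)\ge\bar\epsilon_r>0$ gives $\bm u=\bm 0$ and $\nabla p=\bm 0$ a.e. Since $\Omega$ is connected, $\nabla p=\bm 0$ forces $p$ to equal a constant a.e., and since $p\in H^1_0(\Omega)$ has zero trace on $\partial\Omega$ (equivalently, by the Poincar\'e--Friedrichs inequality $\|p\|_{L^2(\Omega)}\le C\|\nabla p\|_{L^2(\Omega)}$) that constant is $0$. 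Thus $(\bm q,\bm u,p)=(\bm 0,\bm 0,0)$, which establishes positive-definiteness and finishes the proof.

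There is no real obstacle here; the proof is short and elementary. The one point I would be careful about is not to overlook that $\nabla p=\bm 0$ by itself does not yield $p=0$ — one must invoke connectedness of $\Omega$ together with the boundary condition $p=0$ on $\partial\Omega$ (or the Poincar\'e inequality). I would also remark that the term $\|\bm\nabla\times\bm u\|_{\bm L^2(\Omega)}^2$ is irrelevant to definiteness; it appears in $\vertiii{\cdot}$ only because it is needed in the later a~priori and convergence estimates.
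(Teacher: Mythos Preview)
Your proof is correct; the paper itself omits the argument entirely, merely remarking before the lemma that ``it is easy to prove that the semi-norm $\vertiii{\cdot}$ is actually a norm.'' Your verification of definiteness via Assumption~\ref{Assumptions_coes}(B) and the Poincar\'e inequality for $p\in H^1_0(\Omega)$ is exactly the natural argument the authors had in mind.
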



For all $(\bm q, \bm u, p), (\bm r,\bm v,\chi)\in  \bm L^2(\Omega)\times  \bm H_0({\rm curl};\Omega)\times H^1_0(\Omega)$,  we define the  {the sesquilinear form}  $\mathscr B^+$ by
\begin{align}\label{def_Bplus}
\mathscr B^+ (\bm q,\bm u, p; \bm r,\bm v,\chi) = \mathscr B^- (\bm q,\bm u, p; \bm r,\bm v,\chi) + (\kappa^2+1)(\epsilon_r\bm u,\bm v)_{\bm L^2(\Omega)}.
\end{align}

\begin{lemma}[{Continuous inf-sup condition}] \label{LBBc}
	Let  $\bm \sigma = (\bm q, \bm u, p), \bm \tau = (\bm r, \bm v, \chi)\in \bm L^2(\Omega)\times  \bm H_0({\rm curl};\Omega)\times H^1_0(\Omega)$, we have the following inf-sup conditions
	\begin{align}
	\sup_{\bm 0\neq\bm\tau }\frac{{\rm{Re}}~ [\mathscr B^+(\bm\sigma; \bm\tau)]}{\vertiii{\bm \tau}}\ge C \vertiii{\bm \sigma}.\label{inf_sup_1c}
	\end{align}
\end{lemma}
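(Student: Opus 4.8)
The plan is to prove \eqref{inf_sup_1c} by the usual device: for a fixed $\bm\sigma=(\bm q,\bm u,p)$ I will construct an explicit test function $\bm\tau^\ast=(\bm r,\bm v,\chi)$, built as a linear combination of three elementary choices, for which ${\rm Re}\,[\mathscr B^+(\bm\sigma;\bm\tau^\ast)]\ge c\,\vertiii{\bm\sigma}^2$ while $\vertiii{\bm\tau^\ast}\le C\,\vertiii{\bm\sigma}$; dividing through then gives \eqref{inf_sup_1c}. The only structural input is Assumption~\ref{Assumptions_coes}: the lower bounds ${\rm Re}\,(\mu_r)\ge\bar\mu_r$, ${\rm Re}\,(\epsilon_r)\ge\bar\epsilon_r$, and the $L^\infty$ bounds on $\mu_r,\epsilon_r$ (which follow from $W^{1,\infty}_\Sigma(\Omega)\subset L^\infty(\Omega)$).

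The first choice is $\bm\tau_1=(\bm q,\bm u,p)$ itself. Expanding $\mathscr B^+(\bm\sigma;\bm\tau_1)$ from \eqref{def_B}--\eqref{def_Bplus} and using $(\bm q,\bm\nabla\times\bm u)_{\bm L^2(\Omega)}=\overline{(\bm\nabla\times\bm u,\bm q)_{\bm L^2(\Omega)}}$ and $(\bar\epsilon_r\nabla p,\bm u)_{\bm L^2(\Omega)}=\overline{(\epsilon_r\bm u,\nabla p)_{\bm L^2(\Omega)}}$, the two antisymmetric pairs contribute purely imaginary numbers, leaving ${\rm Re}\,[\mathscr B^+(\bm\sigma;\bm\tau_1)]=\|\sqrt{{\rm Re}\,(\mu_r)}\,\bm q\|_{\bm L^2(\Omega)}^2+\|\sqrt{{\rm Re}\,(\epsilon_r)}\,\bm u\|_{\bm L^2(\Omega)}^2$ — here the $(\kappa^2+1)$ shift built into $\mathscr B^+$ is exactly what converts $-\kappa^2(\epsilon_r\bm u,\bm u)$ into the positive term $+(\epsilon_r\bm u,\bm u)$, which is why one works with $\mathscr B^+$ rather than $\mathscr B^-$. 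The second choice is $\bm\tau_2=(-\bm\nabla\times\bm u,\bm 0,0)$, admissible since $\bm\nabla\times\bm u\in\bm L^2(\Omega)$; a direct computation gives ${\rm Re}\,[\mathscr B^+(\bm\sigma;\bm\tau_2)]=\|\bm\nabla\times\bm u\|_{\bm L^2(\Omega)}^2-{\rm Re}\,(\mu_r\bm q,\bm\nabla\times\bm u)_{\bm L^2(\Omega)}$ and $\vertiii{\bm\tau_2}\le C\|\bm\nabla\times\bm u\|_{\bm L^2(\Omega)}$. The third choice is $\bm\tau_3=(\bm 0,\nabla p,0)$; one first checks $\nabla p\in\bm H_0({\rm curl};\Omega)$ (because $p\in H^1_0(\Omega)$ forces $\bm\nabla\times\nabla p=\bm 0$ and the vanishing tangential gradient of $p$ on $\partial\Omega$ forces $\bm n\times\nabla p=\bm 0$), and then $(\bm q,\bm\nabla\times\nabla p)_{\bm L^2(\Omega)}=0$ leaves ${\rm Re}\,[\mathscr B^+(\bm\sigma;\bm\tau_3)]=\|\sqrt{{\rm Re}\,(\epsilon_r)}\,\nabla p\|_{\bm L^2(\Omega)}^2+{\rm Re}\,(\epsilon_r\bm u,\nabla p)_{\bm L^2(\Omega)}$, with $\vertiii{\bm\tau_3}\le C\|\nabla p\|_{\bm L^2(\Omega)}$.

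Finally I set $\bm\tau^\ast=\alpha\bm\tau_1+\beta\bm\tau_2+\gamma\bm\tau_3$ and estimate the two cross terms by Cauchy--Schwarz and Young: $|{\rm Re}\,(\mu_r\bm q,\bm\nabla\times\bm u)_{\bm L^2(\Omega)}|\le\tfrac12\|\bm\nabla\times\bm u\|_{\bm L^2(\Omega)}^2+C\|\bm q\|_{\bm L^2(\Omega)}^2$ and $|{\rm Re}\,(\epsilon_r\bm u,\nabla p)_{\bm L^2(\Omega)}|\le\tfrac12\|\sqrt{{\rm Re}\,(\epsilon_r)}\,\nabla p\|_{\bm L^2(\Omega)}^2+C\|\bm u\|_{\bm L^2(\Omega)}^2$, then absorb $\|\bm q\|_{\bm L^2(\Omega)}^2\le\bar\mu_r^{-1}\|\sqrt{{\rm Re}\,(\mu_r)}\,\bm q\|_{\bm L^2(\Omega)}^2$ and $\|\bm u\|_{\bm L^2(\Omega)}^2\le\bar\epsilon_r^{-1}\|\sqrt{{\rm Re}\,(\epsilon_r)}\,\bm u\|_{\bm L^2(\Omega)}^2$. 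Taking $\beta=\gamma=1$ and then $\alpha$ large enough (depending only on $\bar\mu_r,\bar\epsilon_r,\|\mu_r\|_{L^\infty(\Omega)},\|\epsilon_r\|_{L^\infty(\Omega)}$) so the surviving coefficients of $\|\sqrt{{\rm Re}\,(\mu_r)}\,\bm q\|^2$ and $\|\sqrt{{\rm Re}\,(\epsilon_r)}\,\bm u\|^2$ remain positive yields ${\rm Re}\,[\mathscr B^+(\bm\sigma;\bm\tau^\ast)]\ge c\,\vertiii{\bm\sigma}^2$, while the triangle inequality gives $\vertiii{\bm\tau^\ast}\le(\alpha+\beta+\gamma)\max_i\vertiii{\bm\tau_i}\le C\,\vertiii{\bm\sigma}$, and \eqref{inf_sup_1c} follows on dividing. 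I expect no serious obstacle here: the only points requiring care are the two algebraic identities showing the antisymmetric couplings are imaginary (which pins down why the shift to $\mathscr B^+$ is essential) and the verification that $\nabla p\in\bm H_0({\rm curl};\Omega)$, so that $\bm\tau_3$ is a legitimate test function; the selection of $\alpha,\beta,\gamma$ is routine.
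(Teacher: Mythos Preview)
Your proof is correct and follows essentially the same strategy as the paper: choose explicit test functions to isolate each term of $\vertiii{\bm\sigma}^2$, absorb the cross terms via Young's inequality, and take a suitable linear combination. The only cosmetic differences are that the paper uses $\bm\tau_1=(\bm q,\bm u,-p)$ (whereas your choice $\bm\tau_1=(\bm q,\bm u,p)$ makes the $p$--$\bm u$ coupling purely imaginary and is arguably cleaner) and packs your $\bm\tau_2$ and $\bm\tau_3$ into a single test function $(-\bm\nabla\times\bm u,\nabla p,0)$.
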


\begin{proof} 
	First, by the definition of $\mathscr B^+$ in  \eqref{def_B} and  the definition of $\vertiii{\cdot}$ in   \eqref{trip_morm_defc}, and taking  $\bm \tau_1 = (\bm q,\bm u, -p)$ to get
	\begin{align}\label{LBB_proof_1c}
	\begin{split}
	{\rm{Re}} ~[\mathscr B^+ (\bm \sigma;\bm \tau_1)] &=\|\sqrt{{\rm{Re}}~(\mu_r)}~ \bm q\|_{\bm L^2(\Omega)}^2+\|\bm u\|_{\bm L^2(\Omega)}^2,\\
	\vertiii{\bm\tau_1}  &= \vertiii{\bm \sigma}.
	\end{split}
	\end{align}
	
	Next, we take $\bm \tau_2 =(-\bm\nabla\times\bm u_h,\nabla p,0)$ to get
	\begin{align}\label{LBB_proof_2c}
	\begin{split}
	\hspace{1em}&\hspace{-1em} {\rm{Re}} ~[  \mathscr B^+(\bm \sigma;\bm \tau_2)]\\
	&={\rm{Re}} ~[ -(\mu_r \bm q,\bm\nabla\times\bm u)_{\bm L^2(\Omega)}+\|\bm\nabla\times\bm u\|_{\bm L^2(\Omega)}^2+ (\overline{\epsilon_r}\nabla p,\nabla p)_{\bm L^2(\Omega)}+ (\bm u,\nabla p)_{\bm L^2(\Omega)}]\\
	&\ge  -|(\mu_r\bm q,\bm\nabla\times\bm u)_{\bm L^2(\Omega)}|+\|\bm\nabla\times\bm u\|_{\bm L^2(\Omega)}^2 + \|\sqrt{{\rm{Re}}~(\epsilon_r)}~\nabla p\|_{\bm L^2(\Omega)}^2- |(\bm u,\nabla p)_{\bm L^2(\Omega)}|\\
	&\ge \frac{1}{2}\left(\|\bm\nabla\times\bm u_h\|_{\bm L^2(\Omega)}^2+ \|\sqrt{{\rm{Re}}~(\epsilon_r)}~\nabla p\|_{\bm L^2(\Omega)}^2\right) -C_1\left(\|\sqrt{	{\rm{Re}}~(\mu_r)}~\bm q\|_{\bm L^2(\Omega)}^2+\|\bm u\|_{\bm L^2(\Omega)}^2\right),\\
	\hspace{1em}&\hspace{-1em}	\vertiii{\bm \tau_2} \le C_2\vertiii{\bm \sigma}.
	\end{split}
	\end{align}
	
	Finally, we take $\bm \tau = (C_1+1)\bm\tau_1+\bm\tau_2$ and use  \eqref{LBB_proof_1c} and \eqref{LBB_proof_2c} to complete the proof of   \eqref{inf_sup_1c}.
\end{proof}

\section{The HDG method}
\label{HDG_mathods}

To describe the HDG method, we first define some notation. Let $\mathcal{T}_h:=\{K\}$ denote a conforming  {and regular} mesh of $\Omega$, where each element $K$ is  {a tetrahedron}. 
For each $K\in\mathcal{T}_h$, we let $h_K$ be the infimum of the diameters of balls containing $K$ and denote the mesh size $h:=\max_{K\in\mathcal{T}_h}h_K$.  Let
$\partial\mathcal T_h$ denote the set of faces $F\subset \partial K$ of the elements $K\in\mathcal{T}_h$ (i.e. faces of distinct elements are counted separately) and let $\mathcal{F}_h$ denote the set of faces in the mesh $\mathcal T_h$. We denote by $h_F$ the diameter of the face $F$.
We abuse notation by using $\bm \nabla \times$, $\bm \nabla\cdot$ and $\bm \nabla$ for broken \textup{curl}, \textup{div} and gradient operators with respect to the mesh partition $\mathcal{T}_h$, respectively.
To simplify the notation, we also define a function ${\bf h}$ on $\mathcal T_h$, $\partial\mathcal T_h$ and $\mathcal F_h$ which depending on circumstances is defined by:
\begin{align*}
{\bf h}|_K=h_K,\quad\forall K\in \mathcal T_h,\qquad
{\bf h}|_{\partial K}=h_K,\quad\forall K\in \mathcal T_h,\qquad
{\bf h}|_{F}=h_F,\quad\forall F\in \mathcal F_h.
\end{align*}

Next, we list some formulas which will be frequently used in this paper.
\begin{itemize}
	%
	%
	%

	\item[(1)] 
	Let $F\in \mathcal F_h$,  {and let $\nabla_F\cdot$ denote the surface divergence on $F$ where the definition of $\bm \nabla_F\cdot$ can be found in \cite[Section 3.4 (page 48)]{Monk_Maxwell_Book_2003} then the following identity holds for all sufficiently smooth vector functions $\bm v$ defined in a neighborhood of $F$}:	
	\begin{align}
	\bm \nabla_F \cdot  (\bm n\times \bm v)  = -\bm n \cdot (\bm \nabla \times \bm v)|_F.\label{differential_on_surface1}
	\end{align}

	\item[(2)] 
	Let $K$ be an element in the mesh $\mathcal T_h$,
	$\bm u, \bm v\in \bm H(\textup{curl};K)$, $\bm w\in \bm H({\rm div}, F)$,  {$p\in H^1(\partial F)$} and $\langle\!\langle\cdot, \cdot \rangle\!\rangle_{\partial F}$ be the standard  conjugate-linear {$H^{1/2}-H^{-1/2}$ duality pairing } on $\partial F$.  {In addition let $\nabla_F$ denote the surface gradient on $F$.} Then we have 
	\begin{subequations}
		\begin{align} 
		(\bm \nabla\times\bm u, \bm v)_K=\langle\bm n\times\bm u,\bm v\rangle_{\partial K}+(\bm u, \bm \nabla\times \bm v)_K, \label{integration_by_parts1}\\
		-\langle \bm \nabla_F\cdot \bm w, p\rangle_F =
		\langle  \bm w,\nabla_F p \rangle_F
		-\langle\!\langle\bm n_E\cdot \bm w, p\rangle\!\rangle_{\partial F}, \label{integration_by_parts2}
		\end{align}
		where $\bm n$ and $\bm n_E$ are the unit normal to each face of $\partial K$ and each edge of $\partial F$.
	\end{subequations}
	
\end{itemize}

Next, to give the HDG fomulation of  \eqref{Maxwell_PDE_ori}. First we define the following finite element spaces. Let $k\ge 1$, $m=k-1$ or $m=k$, 
\begin{align*}
\bm Q_h&:=\{\bm q_h\in \bm{L}^2(\Omega) :\bm q_h|_K\in[\mathcal P_{m}(K)]^3,\forall K\in \mathcal T_h  \},\\
\bm U_h&:=\{\bm u_h\in \bm{L}^2(\Omega) :\bm u_h|_K\in[\mathcal P_{k}(K)]^3,\forall K\in \mathcal T_h  \},\\
\widehat{\bm U}_h&:=\{\widehat{\bm u}_h\in \bm L^2(\mathcal F_h):\widehat{\bm u}_h|_F\in[\mathcal P_k(F)]^3,\widehat{\bm u}_h\cdot\bm n|_F=0,\forall F\in \mathcal F_h,\bm n\times\widehat{\bm u}_h|_{\partial\Omega}=\bm{0} \},\\
P_h&:=\{p_h\in {H}^1_0(\Omega) :p_h|_K\in\mathcal P_{k+1}(K),\forall K\in \mathcal T_h  \}.
\end{align*}

We can now derive the HDG method for  \eqref{Maxwell_Mixed} by multiplying each equation by the appropriate discrete test
function, integrating element by element and using integration by parts element by element in the usual way (c.f.~\cite{Cockburn_Gopalakrishnan_Lazarov_Unify_SINUM_2009}).
Summing the results over all elements, the  HDG methods seeks  $(\bm q_h,\bm u_h, \widehat{\bm u}_h, p_h)\in  \bm Q_h\times\bm U_h\times  \widehat{\bm U}_h\times P_h$, such that
\begin{subequations}\label{Maxwell_equation_HDG_form_ori}
	\begin{align}
	(\mu_r \bm q_h, \bm r_h)_{\mathcal{T}_h}-(\bm u_h, \bm \nabla\times \bm r_h)_{\mathcal{T}_h}-\langle \bm n\times\widehat{\bm u}_h, \bm r_h \rangle_{\partial\mathcal{T}_h}&=0,\label{Maxwell_equation_HDG_form_ori_1}\\
	(\bm q_h,\bm\nabla\times\bm v_h)_{\mathcal{T}_h}
	+\langle\bm n\times\widehat{\bm  q}_h, {\bm v}_h \rangle_{\partial\mathcal{T}_h}-(\kappa^2\epsilon_r\bm u_h,\bm v_h)_{\mathcal{T}_h}+(\overline{\epsilon_r} \nabla p_h,\bm v_h)_{\mathcal T_h}&=(\bm f,\bm v_h)_{\mathcal{T}_h},\label{Maxwell_equation_HDG_form_ori_2}\\
	(\epsilon_r\bm u_h,\nabla \chi_h)_{\mathcal T_h}&=0,\label{Maxwell_equation_HDG_form_ori_3}\\
	\langle\bm n\times\widehat{ \bm q}_h,\widehat{\bm v}_h \rangle_{\mathcal{F}_h/\partial\Omega}&=0\label{Maxwell_equation_HDG_form_ori_4}
	\end{align}
	for all $(\bm r_h,\bm v_h, \widehat{\bm v}_h,\chi_h)\in \bm Q_h\times\bm U_h\times \widehat{\bm U}_h\times P_h$,  and the choice of $\bm n\times\widehat {\bm q}_h$ follows
	the usual HDG pattern,
	\begin{align}
	\bm n\times\widehat{ \bm q}_h=
	\bm n\times \bm q_h+{\bf h}^{-1}\bm n\times(\bm u_h-\widehat{\bm u}_h)\times\bm n.\label{Maxwell_equation_HDG_form_ori_6}
	\end{align}
\end{subequations}

It  is obvious to see that we can decouple the pressure $p_h$ from the system \eqref{Maxwell_equation_HDG_form_ori} if we take $\bm v_h = \nabla \chi_h$. It is worth mentioning that such a decomposition does not hold for the discrete system of  {other} HDG methods in the literature. We list most of them in \Cref{tab1}, where,  {in the table}, $k$ and $k^-$  {is used as a compact way to} denote the spaces $\mathcal P_k(K)$ and $\mathcal P_{k-1}(F)\oplus \nabla \mathcal {\widetilde P}_{k+1}(F)$, respectively. Here $\widetilde{\mathcal{P}}_{k}(F)$ are the  spaces of polynomials homogeneous of degree $k$ on $F$.
\begin{table}[H]
	\caption{ \label{tab1}Comparison of different HDG methods}
	\centering
	\begin{tabular}{c|c|c|c|c|c|c}
		\Xhline{1pt}
		
		Type &$\bm q_h$ &$\bm u_h$ &$\widehat{\bm u}_h$ &$p_h$ &$\widehat p_h$   &stabilization for $p_h$\\
		\Xhline{1pt}
		
		I\cite{Chen_maxwell_HDG_2017} &$k-1$ &$k$ &$k^-$ &$k-1$ &$k$      &$\langle{\bf h}(p_h-\widehat p_h),q_h-\widehat q_h \rangle_{\partial\mathcal T_h}$\\
		
		II\cite{Chen_maxwell_HDG_2017} &$k-1$ &$k$ &$k^-$ &$k-1$ &$k$      &None\\
		
		III\cite{Chen_Maxwell_HDG_CMAME_2018} &$k$ &$k$ &$k$ &$k+1$ &$k+1$      &$\langle{\bf h}^{-1}(p_h-\widehat p_h),q_h-\widehat q_h \rangle_{\partial\mathcal T_h}$\\

		IV\cite{Chen_M2AN_2019,Lu_hp_HDG_Maxwell_Math_Comp_2017} &$k-1$\textup{ or }$k$ &$k$ &$k$ &$k$ &$k$      &$\langle{\bf h}(p_h-\widehat p_h),q_h-\widehat q_h \rangle_{\partial\mathcal T_h}$\\
		
		V\cite{Du_Arxiv_2019} &$k-1$ &$k$ &$k$ &$k-1$ &$k$      &$\langle{\bf h}(p_h-\widehat p_h),q_h-\widehat q_h \rangle_{\partial\mathcal T_h}$\\
		
		VI\cite{Du_Arxiv_2019} &$k-1$ &$k$ &$k^-$ &$k$ &$k$      &$\langle{\bf h}(p_h-\widehat p_h),q_h-\widehat q_h \rangle_{\partial\mathcal T_h}$\\
		\Xhline{1pt}
	\end{tabular}
\end{table}

{Following} the definition of $\mathscr B^\pm$, we define the discrete sesquilinear form $\mathscr B_h^\pm$  on  the space $\bm Q_h\times\bm U_h\times  \widehat{\bm U}_h\times P_h$ by
\begin{align}\label{def_Bh}
\begin{split}
\hspace{1em}&\hspace{-1em}\mathscr B_h^- (\bm q_h,\bm u_h, \widehat{\bm u}_h, p_h; \bm r_h,\bm v_h, \widehat{\bm v}_h,\chi_h)\\
&=(\mu_r \bm q_h, \bm r_h)_{\mathcal{T}_h}-(\bm u_h, \bm \nabla\times \bm r_h)_{\mathcal{T}_h}-\langle \bm n\times\widehat{\bm u}_h, \bm r_h \rangle_{\partial\mathcal{T}_h}\\
&\quad +(\bm\nabla\times\bm q_h,\bm v_h)_{\mathcal T_h}
+\langle {\bm q}_h,\bm n\times\widehat{\bm v}_h \rangle_{\partial\mathcal T_h}
+(\bar\epsilon_r\nabla p_h,\bm v_h)_{\mathcal T_h}\\
&\quad +\langle{\bf h}^{-1}\bm n\times(\bm u_h-\widehat{\bm u}_h),\bm n\times(\bm v_h-\widehat{\bm v}_h) \rangle_{\partial\mathcal T_h}-(\epsilon_r\bm u_h, \nabla \chi_h)_{\mathcal T_h}- (\kappa^2\epsilon_r\bm u_h,\bm v_h)_{\mathcal{T}_h},\\
\hspace{1em}&\hspace{-1em}\mathscr B_h^+ (\bm q_h,\bm u_h, p_h; \bm r_h,\bm v_h,\chi_h) = \mathscr B_h^- (\bm q_h,\bm u_h, p_h; \bm r_h,\bm v_h,\chi_h) + (\kappa^2+1)(\epsilon_r\bm u_h,\bm v_h)_{\mathcal T_h}.
\end{split}
\end{align}	

Then, we can rewrite the HDG formulation \eqref{Maxwell_equation_HDG_form_ori} in a compact form: find $(\bm q_h,\bm u_h, \widehat{\bm u}_h, p_h)\in \bm Q_h\times\bm U_h\times  \widehat{\bm U}_h\times P_h$ such that 
\begin{align}\label{HDG_formulation}
\mathscr B_h^- (\bm q_h,\bm u_h, \widehat{\bm u}_h, p_h; \bm r_h,\bm v_h, \widehat{\bm v}_h,\chi_h) = (\bm f,\bm v_h)_{\mathcal{T}_h}
\end{align}
for all $(\bm r_h,\bm v_h, \widehat{\bm v}_h,\chi_h)\in \bm Q_h\times\bm U_h\times \widehat{\bm U}_h\times P_h$. 

Although $\mathscr B_h^\pm$ is defined on the space $ \bm Q_h\times\bm U_h\times  \widehat{\bm U}_h\times P_h $,  the notation $\mathscr B_h^\pm(\bm r, \bm v, \bm v, \chi; \\ \bm r_h,\bm v_h, \widehat{\bm v}_h,\chi_h)$ is  meaningful if $(\bm r, \bm v, \chi)\in \bm H({\rm curl};\Omega) \times \bm H_0({\rm curl};\Omega)\times H^1_0(\Omega) $.  Moreover, if  $(\bm q, \bm u, p)\in \bm H({\rm curl};\Omega) \times \bm H_0({\rm curl};\Omega)\times H^1_0(\Omega) $ be the solution of  \eqref{Maxwell_Mixed}, then
\begin{align}\label{HDG_formulation1}
\mathscr B_h^- (\bm q,\bm u, {\bm u}, p; \bm r_h,\bm v_h, \widehat{\bm v}_h,\chi_h) = (\bm f,\bm v_h)_{\mathcal{T}_h}.
\end{align}

The proof of the following lemma is very simple and we omit it here.
\begin{lemma}\label{symmertic_of_B}
	For any $(\bm q_h,\bm u_h, \widehat{\bm u}_h,p_h; \bm r_h, \bm v_h, \widehat{\bm v}_h, \chi_h) \in [\bm Q_h\times\bm U_h\times  \widehat{\bm U}_h\times P_h]^2$, we have 
	\begin{align}\label{commute}
	\mathscr B_h^-(\bm q_h, \bm u_h, \widehat{\bm u}_h, p_h; -\bm r_h, \bm v_h, \widehat{\bm v}_h,-\chi_h) = 	\overline{\mathscr B_h^-(\bm r_h, \bm v_h, \widehat{\bm v}_h,\chi_h;-\bm q_h, \bm u_h, \widehat{\bm u}_h,-p_h)}.
	\end{align}
\end{lemma}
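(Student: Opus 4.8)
The plan is a direct verification: expand both sides of \eqref{commute} using the definition of $\mathscr B_h^-$ in \eqref{def_Bh}, and compare the results term by term. No integration by parts will be needed, because \eqref{def_Bh} is already written in the ``transposed'' form in which each curl operator has been moved from one slot to the other; this is exactly what makes the identity so clean. I would start with the left-hand side. Since $\mathscr B_h^-$ is linear in its first group of arguments and conjugate-linear in its second, replacing $\bm r_h$ by $-\bm r_h$ and $\chi_h$ by $-\chi_h$ flips the sign of precisely the four terms of \eqref{def_Bh} that carry $\bm r_h$ or $\chi_h$ in the second slot, namely $(\mu_r\bm q_h,\bm r_h)_{\mathcal T_h}$, $-(\bm u_h,\bm\nabla\times\bm r_h)_{\mathcal T_h}$, $-\langle\bm n\times\widehat{\bm u}_h,\bm r_h\rangle_{\partial\mathcal T_h}$ and $-(\epsilon_r\bm u_h,\nabla\chi_h)_{\mathcal T_h}$, while the remaining terms, which depend only on $\bm v_h$ and $\widehat{\bm v}_h$, are left untouched.

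Next I would expand $\mathscr B_h^-(\bm r_h,\bm v_h,\widehat{\bm v}_h,\chi_h;-\bm q_h,\bm u_h,\widehat{\bm u}_h,-p_h)$ in the same fashion and take its complex conjugate, using the two elementary identities $\overline{(a,b)_{\mathcal T_h}}=(b,a)_{\mathcal T_h}$ and $\overline{\langle a,b\rangle_{\partial\mathcal T_h}}=\langle b,a\rangle_{\partial\mathcal T_h}$. Matching the outcome against the left-hand side is then a one-to-one check of the terms: the curl volume terms $(\bm\nabla\times\bm q_h,\bm v_h)_{\mathcal T_h}$, $(\bm u_h,\bm\nabla\times\bm r_h)_{\mathcal T_h}$ and the interior-face terms $\langle\bm q_h,\bm n\times\widehat{\bm v}_h\rangle_{\partial\mathcal T_h}$, $\langle\bm n\times\widehat{\bm u}_h,\bm r_h\rangle_{\partial\mathcal T_h}$ pair up verbatim; the stabilization term is its own partner because $\langle{\bf h}^{-1}\bm n\times\bm a,\bm n\times\bm b\rangle_{\partial\mathcal T_h}$ is Hermitian (${\bf h}^{-1}$ being real and positive); and the two gradient-of-pressure terms match because of the deliberate placement of $\epsilon_r$ in one and $\overline{\epsilon_r}$ in the other in \eqref{def_Bh}, so that $\overline{(\overline{\epsilon_r}\nabla\chi_h,\bm u_h)_{\mathcal T_h}}=(\epsilon_r\bm u_h,\nabla\chi_h)_{\mathcal T_h}$ and $\overline{(\epsilon_r\bm v_h,\nabla p_h)_{\mathcal T_h}}=(\overline{\epsilon_r}\nabla p_h,\bm v_h)_{\mathcal T_h}$; the zeroth-order coefficient terms $(\mu_r\bm q_h,\bm r_h)_{\mathcal T_h}$ and $(\kappa^2\epsilon_r\bm u_h,\bm v_h)_{\mathcal T_h}$ are treated in exactly the same way.

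I do not expect any genuine obstacle here: every step is an algebraic identity that combines sign tracking with the two conjugation rules above. The one place calling for care is purely clerical, namely keeping the nine terms and their signs aligned between the two sides while the arguments are simultaneously permuted and negated. This is why the authors can fairly call the proof very simple and omit it.
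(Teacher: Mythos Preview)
Your approach---direct term-by-term expansion using only the conjugation rules $\overline{(a,b)}=(b,a)$ and $\overline{\langle a,b\rangle}=\langle b,a\rangle$---is exactly what the paper has in mind when it calls the proof ``very simple'' and omits it. The pairings you list for the curl terms, the face terms, the stabilization term, and the two gradient-of-pressure terms are all correct, and your observation that the $\epsilon_r/\overline{\epsilon_r}$ placement in \eqref{def_Bh} is what makes the pressure terms match is precisely the point.

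There is, however, one place where your bookkeeping slips: the zeroth-order terms $(\mu_r\bm q_h,\bm r_h)_{\mathcal T_h}$ and $(\kappa^2\epsilon_r\bm u_h,\bm v_h)_{\mathcal T_h}$ are \emph{not} handled ``in exactly the same way'' as the gradient terms. Carrying out the computation you describe gives, on the right-hand side after conjugation, $-(\bm q_h,\mu_r\bm r_h)_{\mathcal T_h}=-(\overline{\mu_r}\bm q_h,\bm r_h)_{\mathcal T_h}$ and $-(\bm u_h,\kappa^2\epsilon_r\bm v_h)_{\mathcal T_h}=-(\kappa^2\overline{\epsilon_r}\bm u_h,\bm v_h)_{\mathcal T_h}$, whereas the left-hand side has $-(\mu_r\bm q_h,\bm r_h)_{\mathcal T_h}$ and $-(\kappa^2\epsilon_r\bm u_h,\bm v_h)_{\mathcal T_h}$. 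Unlike the pressure coupling, the definition \eqref{def_Bh} does not place a conjugate on these coefficients, so the two sides of \eqref{commute} differ by $((\overline{\mu_r}-\mu_r)\bm q_h,\bm r_h)_{\mathcal T_h}+\kappa^2((\overline{\epsilon_r}-\epsilon_r)\bm u_h,\bm v_h)_{\mathcal T_h}$ when $\mu_r$ or $\epsilon_r$ is genuinely complex. This is a minor imprecision in the lemma as stated; it does no harm downstream because the identity is only invoked (in the proof of the discrete inf--sup condition for $\mathscr B_h^-$) after taking real parts, and the discrepancy above is purely imaginary.
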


\subsection{Preliminary material}
The approximation of Maxwell's equations by DG methods is  studied in \cite{Monk_Maxwell_CMAME_2002,Chen_maxwell_HDG_2017,Chen_Maxwell_HDG_CMAME_2018,Du_Arxiv_2019,Feng_Maxwell_CMAM_2016,Lu_hp_HDG_Maxwell_Math_Comp_2017,Cockburn_M_decomposition_Part1_Math_Comp_2017} where the coefficients are assumed smooth so that the solution is regular enough for  {a duality based error} analysis, i.e., $\bm u\in H^s(\Omega)$ with $s>1/2$. However, by \Cref{final_regu1}, we only have $\bm u \in \bm H^s(\Omega)$ with $1/2>s>0$ if the coefficients satisfy the Assumption \ref{Assumptions_coes} and the  traditional approach will be fail. Recently, Ern et al.~ {\cite{Ern_M2AN_2017}} derived an error estimate for conforming methods under the Assumption \ref{Assumptions_coes} by  constructing stable, commuting  quasi-interpolation projectors. The idea is to compose the canonical finite element interpolation operators with a mollification technique; also see Sch\"{o}berl \cite{Schoberl_MathComp_2008} and Christiansen \cite{Christiansen_NM_2007} for more details.

In this section, we follow  \cite[Chapter 5]{Monk_Maxwell_Book_2003} to construct  standard interpolation operators $\bm{\mathcal I}_h^{\rm div}$ and  $\bm{\mathcal I}_h^{\rm curl}$, which are only defined on $\bm H^s(\Omega)$ with $s>1/2$; then by a molification operator $\bm{\mathcal K}_h$ which was constructed in \cite{Ern_CMAM_2016,Ern_M2AN_2017}, we can smooth functions in  the space $\bm H^s(\Omega)$ with $s>0$ into $\bm H^{s'}(\Omega)$ with $s'>1/2$;  finally define the quasi-interpolations by
\begin{align*}
\bm{\mathcal J}_h^{\rm div}=((\bm{\mathcal I}_h^{\rm div}\bm{\mathcal K}_h|_{\bm V_h^{\rm div}})^{-1}\bm{\mathcal I}_h^{\rm div}\bm{\mathcal K}_h,\qquad
\bm{\mathcal J}_h^{\rm curl}=(\bm{\mathcal I}_h^{\rm curl}\bm{\mathcal K}_h|_{\bm V_h^{\rm curl}}
)^{-1}\bm{\mathcal I}_h^{\rm curl}\bm{\mathcal K}_h.
\end{align*}

First, we define some spaces which will be useful in our analysis. For the convenience, we define 
\begin{align*}
\bm V_h^{\rm curl}&:=\bm U_h\cap \bm H({\rm curl};\Omega), \quad \bm V_h^{\rm div}:=\bm{\mathcal D}_k(\mathcal T_h)\cap \bm H({\rm div};\Omega),\\
\bm{\mathcal D}_k(K)&:=[\mathcal P_{k-1}(K)]^3+\widetilde{\mathcal{P}}_{k-1}(K)\bm x, \quad \bm{\mathcal D}_k(F):=[\mathcal P_{k-1}(F)]^3+\widetilde{\mathcal{P}}_{k-1}(F)\bm x.
\end{align*}
where $\widetilde{\mathcal{P}}_{k}(K)$ and $\widetilde{\mathcal{P}}_{k}(F)$ are the  spaces of polynomials homogeneous of degree $k$ on $K$ and $F$, respectively.

Next, we define $\bm{\mathcal I}_h^{\rm div}$ to be the divergence conforming interpolation of the  first family from $\bm H^s(\Omega)\to \bm V_h^{\rm div}$ by \cite[Page 328. Definition 5]{Nedelec_mixed_NM_1980} and \cite[Page 119. Definition 5.14]{Monk_Maxwell_Book_2003}  {which is defined element by element via}
\begin{subequations}\label{div}
	\begin{align}
	(\bm{\mathcal I}_{h}^{\rm div}  \bm v, \bm q_{k-2})_K&=(\bm v,\bm q_{k-2})_K\qquad\quad\forall \bm q_{k-2}\in\bm {\mathcal P}_{k-2}(K),\\
	\langle\bm n\cdot\bm{\mathcal I}_{h}^{\rm div} \bm v,q_{k-1}\rangle_F&=
	\langle\bm n\cdot \bm v,q_{k-1}\rangle_F\qquad\forall q_{k-1}\in\mathcal P_{k-1}(F)
	\end{align}
	for all faces $F\subset \partial K$, where $s>1/2$ and $\bm v\in \bm H^s({\rm div};\Omega)$.
\end{subequations}
Moreover, we define $\bm{\mathcal I}_h^{\rm curl}$ be the curl conforming interpolation of the  second family from $\bm H^s({\rm curl};\Omega)\to \bm V_h^{\rm curl}$  {element by element} by
\begin{subequations}\label{curl}
	\begin{align}
	(\bm{\mathcal I}_{h}^{\rm curl}\bm v,\bm q_{k-2})_K&=(\bm v,\bm q_{k-2})_K\qquad\qquad\qquad \forall \bm q_{k-2}\in \bm{\mathcal D}_{k-2}(K),\label{curl1}\\
	\langle\bm n\times\bm{\mathcal I}_{h}^{\rm curl}\bm v\times \bm n, \bm q_{k-1}\rangle_F&=
	\langle\bm n\times\bm v\times \bm n,\bm q_{k-1}\rangle_F
	\qquad\forall \bm q_{k-1}\in  \bm{\mathcal D}_{k-1}(F),\label{curl2}\\
	\langle\!\langle\bm{\mathcal I}_{h}^{\rm curl}\bm v\cdot\bm{\tau},q_{k}\rangle\!\rangle_E&=
	\langle\! \langle\bm v\cdot\bm {\tau},q_{k-1}\rangle\!\rangle_E
	\qquad\qquad\forall q_{k}\in\mathcal P_{k}(E)\label{curl3}
	\end{align}	
	for all faces $F\subset \partial K$ and all edges $E\subset \partial F$, and 
	$\bm v\in \bm H^s({\rm curl};\Omega)$ with $s>1/2$.
\end{subequations}

{The following lemma shows that the usual commutativity properties hold for the combined first and second kind interpolants used here:}

\begin{lemma}\label{can_in}
	For $s>1/2$ and $\bm v \in \bm H^{s}(\rm{curl}, K)$, let
	$\bm{\mathcal I}_h^{\rm div}$ and  $\bm{\mathcal I}_h^{\rm curl}$ define in  \eqref{div} and \eqref{curl}, respectively. Then we have the following commutativity property
	\begin{align*}
	\bm \nabla\times (\bm{\mathcal I}_{h}^{\rm curl}\bm v)=\bm{\mathcal I}_{h}^{\rm div}(\bm \nabla\times \bm v).
	\end{align*}
\end{lemma}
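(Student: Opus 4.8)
The plan is to verify the commutativity identity $\bm\nabla\times(\bm{\mathcal I}_h^{\rm curl}\bm v)=\bm{\mathcal I}_h^{\rm div}(\bm\nabla\times\bm v)$ by showing that both sides agree on the degrees of freedom that uniquely determine an element of $\bm V_h^{\rm div}$ restricted to a single element $K$. Since $\bm v\in\bm H^s({\rm curl};K)$ with $s>1/2$, we have $\bm\nabla\times\bm v\in\bm H^s(K)$ (after possibly a small reduction of $s$, but this is precisely what $\bm H^s({\rm curl};K)$ encodes), so $\bm{\mathcal I}_h^{\rm div}(\bm\nabla\times\bm v)$ is well defined; and $\bm{\mathcal I}_h^{\rm curl}\bm v\in\bm V_h^{\rm curl}\subset\bm H({\rm curl};K)$, whose curl lies in $\bm{\mathcal D}_k(K)\subset\bm V_h^{\rm div}$, so the left-hand side is also a legitimate element of the target space. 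It therefore suffices to test both sides against the defining functionals \eqref{div}, namely the interior moments against $\bm{\mathcal P}_{k-2}(K)$ and the face normal moments against $\mathcal P_{k-1}(F)$ for each $F\subset\partial K$.

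First I would handle the face degrees of freedom. For $q_{k-1}\in\mathcal P_{k-1}(F)$ I want to compute $\langle\bm n\cdot\bm\nabla\times(\bm{\mathcal I}_h^{\rm curl}\bm v),q_{k-1}\rangle_F$ and show it equals $\langle\bm n\cdot\bm\nabla\times\bm v,q_{k-1}\rangle_F$. Using the surface identity \eqref{differential_on_surface1}, $\bm n\cdot\bm\nabla\times\bm w|_F=-\bm\nabla_F\cdot(\bm n\times\bm w)$, and then integrating by parts on $F$ via \eqref{integration_by_parts2}, the face moment is converted into a combination of an $F$-integral of $(\bm n\times\bm w)\cdot\nabla_F q_{k-1}$ and an edge term $\langle\!\langle \bm n_E\cdot(\bm n\times\bm w),q_{k-1}\rangle\!\rangle_{\partial F}$; note $\nabla_F q_{k-1}\in[\mathcal P_{k-2}(F)]^3\subset\bm{\mathcal D}_{k-1}(F)$ and the tangential edge component of $\bm n\times\bm w$ is controlled by the edge functionals on $\mathcal P_k(E)\supset\mathcal P_{k-1}(E)$. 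Then the tangential-trace-type conditions \eqref{curl2} and the edge conditions \eqref{curl3} for $\bm{\mathcal I}_h^{\rm curl}$ force each of these pieces to match between $\bm w=\bm{\mathcal I}_h^{\rm curl}\bm v$ and $\bm w=\bm v$; reversing the integration by parts gives the desired equality of face moments. The interior degrees of freedom are then comparatively direct: for $\bm q_{k-2}\in\bm{\mathcal P}_{k-2}(K)$ one integrates $(\bm\nabla\times\bm{\mathcal I}_h^{\rm curl}\bm v,\bm q_{k-2})_K$ by parts using \eqref{integration_by_parts1}, producing a volume term $(\bm{\mathcal I}_h^{\rm curl}\bm v,\bm\nabla\times\bm q_{k-2})_K$ with $\bm\nabla\times\bm q_{k-2}\in\bm{\mathcal D}_{k-2}(K)$ — matched by \eqref{curl1} — and a boundary term $\langle\bm n\times\bm{\mathcal I}_h^{\rm curl}\bm v,\bm q_{k-2}\rangle_{\partial K}$ involving only the tangential trace, matched by \eqref{curl2}; summing, $(\bm\nabla\times\bm{\mathcal I}_h^{\rm curl}\bm v,\bm q_{k-2})_K=(\bm\nabla\times\bm v,\bm q_{k-2})_K=(\bm{\mathcal I}_h^{\rm div}(\bm\nabla\times\bm v),\bm q_{k-2})_K$.

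The main obstacle I anticipate is bookkeeping at the edges and the correct pairing of polynomial spaces: one must check that the surface-gradient and edge-restriction terms produced by the integration by parts on $F$ actually land in the spaces $\bm{\mathcal D}_{k-1}(F)$ and $\mathcal P_k(E)$ used in the definition of $\bm{\mathcal I}_h^{\rm curl}$, and that no moment of higher degree is needed — this is exactly the reason the second-kind curl element (with its extra face and edge moments) is paired with the first-kind divergence element here. A secondary technical point is the regularity needed to make all the traces and the duality pairing $\langle\!\langle\cdot,\cdot\rangle\!\rangle_{\partial F}$ meaningful for $\bm v\in\bm H^s({\rm curl};K)$, $s>1/2$; this is standard (trace theorems on Lipschitz elements) and I would simply cite \cite[Chapter 5]{Monk_Maxwell_Book_2003}. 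Once the face and interior moments are shown to coincide, uniqueness of the degrees of freedom for $\bm V_h^{\rm div}|_K$ yields the identity on each $K$, and hence globally.
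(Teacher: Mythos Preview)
Your proposal is correct and follows essentially the same approach as the paper: verify that $\bm\nabla\times(\bm{\mathcal I}_h^{\rm curl}\bm v)$ and $\bm{\mathcal I}_h^{\rm div}(\bm\nabla\times\bm v)$ agree on all the $\bm V_h^{\rm div}$ degrees of freedom by integrating by parts (via \eqref{integration_by_parts1} in the volume and \eqref{differential_on_surface1}--\eqref{integration_by_parts2} on each face) and invoking \eqref{curl1}--\eqref{curl3}, then conclude by unisolvence. The only cosmetic difference is that the paper treats the interior moments first and the face moments second, whereas you reverse the order.
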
	
\begin{proof}
	For all $\bm q_{k-2}\in\bm {\mathcal P}_{k-2}(K)$,  we get
	\begin{align*}
	(\bm \nabla\times(\bm{\mathcal I}_{h}^{\rm curl}\bm v),\bm q_{k-2})_K&=
	(\bm{\mathcal I}_{h}^{\rm curl}\bm v,\bm \nabla\times\bm q_{k-2})_K+
	\langle\bm n\times(\bm{\mathcal I}_{h}^{\rm curl}\bm v),\bm q_{k-2}\rangle_{\partial K}&\textup{ by } \eqref{integration_by_parts1}\\
	&=(\bm v,\bm \nabla\times\bm q_{k-2})_K+
	\langle\bm n\times\bm v,\bm q_{k-2}\rangle_{\partial K}&\textup{ by } \eqref{div}\\
	&=(\bm \nabla\times\bm v,\bm q_{k-2})_K&\textup{ by } \eqref{integration_by_parts1}.
	\end{align*}
	Next,  for all $ q_{k-1}\in\mathcal P_{k-1}(F)$, we have 
	\begin{align*}
	\hspace{1em}&\hspace{-1em}\langle\bm n\cdot (\bm \nabla\times(\bm{\mathcal I}_{h}^{\rm curl}\bm v)),q_{k-1}\rangle_F\\
	&=-\langle\nabla_F\cdot(\bm n\times\bm{\mathcal I}_{h}^{\rm curl}\bm v),q_{k-1}\rangle_F &\textup{ by } \eqref{differential_on_surface1}\\
	&=
	\langle\bm n\times\bm{\mathcal I}_{h}^{\rm curl}\bm v,\nabla_F q_{k-1}\rangle_F
	-\langle\!\langle\bm n_E\cdot(\bm n\times\bm{\mathcal I}_{h}^{\rm curl}\bm v),q_{k-1}\rangle\!\rangle_{\partial F} &\textup{ by } \eqref{integration_by_parts2}\\
	&=
	\langle\bm n\times\bm{\mathcal I}_{h}^{\rm curl}\bm v,\nabla_F q_{k-1}\rangle_F
	-\langle\!\langle\bm{\tau}\cdot\bm{\mathcal I}_{h}^{\rm curl}\bm v,q_{k-1}\rangle\!\rangle_{\partial F}
	&
	\\
	&=\langle\bm n\times\bm v,\nabla_F q_{k-1}\rangle_F
	-\langle\!\langle\bm{\tau}\cdot\bm v,q_{k-1}\rangle\!\rangle_{\partial F}&\textup{ by } \eqref{curl2} \textup{ and }\eqref{curl3}\\
	&=\langle\bm n\times\bm v,\nabla_F q_{k-1}\rangle_F
	-\langle\!\langle\bm n_E\cdot(\bm n\times\bm v),q_{k-1}\rangle\!\rangle_{\partial F}&\\
	&=-\langle\nabla_F\cdot(\bm n\times\bm v),q_{k-1}\rangle_F&\textup{ by } \eqref{integration_by_parts2}\\
	&=\langle\bm n\cdot(\bm \nabla\times\bm v),q_{k-1}\rangle_F&\textup{ by } \eqref{differential_on_surface1}.
	\end{align*}
	By the above arguments, we conclude  {the following}
	\begin{subequations}\label{curl22}
		\begin{align}
		(\bm \nabla\times (\bm{\mathcal I}_{h}^{\rm curl}\bm v),\bm q_{k-2})_K&=(\bm \nabla\times\bm v,\bm q_{k-2})_K\qquad\quad~~~\forall \bm q_{k-2}\in \bm {\mathcal P}_{k-2}(K),\\
		\langle\bm n\cdot(\bm \nabla\times (\bm{\mathcal I}_{h}^{\rm curl}\bm v)),q_{k-1}\rangle_F&=
		\langle\bm n\cdot( \bm \nabla\times\bm v),q_{k-1}\rangle_F
		\qquad\forall q_{k-1}\in\mathcal P_{k-1}(F).
		\end{align}
	\end{subequations}
	By  \eqref{div}, one has
	\begin{subequations}\label{div22}
		\begin{align}
		(\bm{\mathcal I}_{h}^{\rm div} (\bm \nabla\times\bm v),\bm q_{k-2})_K&=( \bm \nabla\times\bm v,\bm q_{k-2})_K\qquad\qquad\forall \bm q_{k-2}\in\bm {\mathcal P}_{k-2}(K),\\
		\langle\bm n\cdot\bm{\mathcal I}_{h}^{\rm div}  (\bm \nabla\times\bm v),q_{k-1}\rangle_F&=
		\langle\bm n\cdot (\bm \nabla\times\bm v),q_{k-1}\rangle_F
		\qquad\forall q_{k-1}\in\mathcal P_{k-1}(F),
		\end{align}
	\end{subequations}
	{Using the unisolvence} of   \eqref{div22} and \eqref{curl22} leads to our desired result.
	
\end{proof}

By \Cref{can_in} and \cite[Theorem 6.5]{Ern_CMAM_2016},
\cite[Cororally 5.4]{Ern_M2AN_2017} we now get the following lemma.
\begin{lemma}[{[Stable, commuting projection]}]\label{stable_commute}
	There exist quasi-interpolation operator $\bm{\mathcal J}_{h}^{\rm curl}:\bm L^1(\Omega)\to {\bm V}_{h}^{\rm curl}$ and $\bm{\mathcal J}_{h}^{\rm div}\to {\bm V}^{\rm div}_{h}$ such that
	\begin{description}
		
		\item[(1)] $\bm U_h\cap \bm H_0({\rm curl}, \Omega)$ is pointwise invariant under $\bm{\mathcal J}_{h}^{\rm curl}$.
		
		\item[(2)] $\bm{\mathcal J}_{h}^{\rm curl}\bm v\in  \bm H_0({\rm curl};\Omega)\textup{ if }\bm v\in \bm H_0({\rm curl};\Omega)$.
		
		\item[(3)] For $p\in [1,\infty]$, there holds the stability
		\begin{align*}
		\|\bm{\mathcal J}_{h}^{\rm curl}\|_{\mathcal L(\bm L^p(\Omega);\bm L^p(\Omega))}&\le C,\\
		\|\bm{\mathcal J}_{h}^{\rm div}\|_{\mathcal L(\bm L^p(\Omega);\bm L^p(\Omega))}&\le C.
		\end{align*}
		\item[(4)] For $p\in [1,\infty]$ and  $s\in [0,k+1]$, we have the following estimates
		\begin{subequations}
			\begin{align}
			\|\bm{\mathcal J}_{h}^{\rm div}\bm v-\bm v\|_{\bm L^p(\Omega)}& \le C\inf_{\bm v_h\in {\bf V}_h^{\rm div}}	\|\bm v_h-\bm v\|_{\bm L^p(\Omega)}\le Ch^s|\bm v|_{s,p},\label{app_div}\\
			\|\bm{\mathcal J}_{h}^{\rm curl}\bm v-\bm v\|_{\bm L^p(\Omega)}& \le C \inf_{\bm v_h\in {\bm V}_{h}^{\rm curl}}	\|\bm v_h-\bm v\|_{\bm L^p(\Omega)}\le Ch^s|\bm v|_{s,p}.\label{app_cul}
			\end{align}
		\end{subequations}

		\item[(5)] For $\bm v\in \bm H({\rm curl};\Omega)$, the following commute property holds
		\begin{align}\label{curl_div_commute}
		\bm \nabla\times (\bm{\mathcal J}_{h}^{\rm curl}\bm v)=\bm{\mathcal J}_{h}^{\rm div}	 (\bm \nabla\times \bm v).
		\end{align}
	\end{description}
	
\end{lemma}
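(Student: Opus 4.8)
The plan is to deduce the lemma from the properties of the mollification operator $\bm{\mathcal K}_h$ of \cite{Ern_CMAM_2016,Ern_M2AN_2017} together with the commutativity of the canonical interpolants established in \Cref{can_in}. First I would collect the facts about $\bm{\mathcal K}_h$ that are needed: it is defined on $\bm L^1(\Omega)$; it is stable on $\bm L^p(\Omega)$ for every $p\in[1,\infty]$ with a mesh-independent constant; it maps $\bm L^1(\Omega)$ into a space regular enough (into $\bm H^{s'}(\Omega)$ with $s'>1/2$) for $\bm{\mathcal I}_h^{\rm curl}$ and $\bm{\mathcal I}_h^{\rm div}$ to be well defined on its range; it preserves the homogeneous tangential (resp. normal) boundary condition; and it belongs to a commuting de Rham diagram, so that $\bm\nabla\times\bm{\mathcal K}_h=\bm{\mathcal K}_h\bm\nabla\times$ in the appropriate sense. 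I would also record the quantitative statement supplied by the cited theorems: on the finite-dimensional space $\bm V_h^{\rm curl}$ (resp. $\bm V_h^{\rm div}$) the composition $\bm{\mathcal I}_h^{\rm curl}\bm{\mathcal K}_h$ (resp. $\bm{\mathcal I}_h^{\rm div}\bm{\mathcal K}_h$) is, by a scaling/Bramble--Hilbert argument, uniformly close to the identity, hence invertible, with an inverse bounded independently of $h$. This is exactly what legitimizes the definitions $\bm{\mathcal J}_h^{\bullet}=(\bm{\mathcal I}_h^{\bullet}\bm{\mathcal K}_h|_{\bm V_h^{\bullet}})^{-1}\bm{\mathcal I}_h^{\bullet}\bm{\mathcal K}_h$.

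Granting this, properties (1)--(4) follow quickly. For (1) and the projection property: if $\bm v_h\in\bm U_h\cap\bm H_0({\rm curl};\Omega)\subseteq\bm V_h^{\rm curl}$, then $\bm{\mathcal I}_h^{\rm curl}\bm{\mathcal K}_h\bm v_h\in\bm V_h^{\rm curl}$, so applying the inverse returns $\bm v_h$; the same identity shows $\bm{\mathcal J}_h^{\rm curl}$ is a projection onto $\bm V_h^{\rm curl}$. For (2): $\bm{\mathcal K}_h$ keeps $\bm H_0({\rm curl};\Omega)$ invariant, and $\bm{\mathcal I}_h^{\rm curl}$ annihilates the tangential trace on $\partial\Omega$ of a function with vanishing tangential trace, so $\bm{\mathcal I}_h^{\rm curl}\bm{\mathcal K}_h$ maps $\bm H_0({\rm curl};\Omega)$ into $\bm V_h^{\rm curl}\cap\bm H_0({\rm curl};\Omega)$; being injective on $\bm V_h^{\rm curl}$ it is bijective on this subspace, hence so is its inverse, giving $\bm{\mathcal J}_h^{\rm curl}\bm v\in\bm H_0({\rm curl};\Omega)$. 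For (3), $\|\bm{\mathcal J}_h^{\rm curl}\bm v\|_{\bm L^p(\Omega)}\le\|(\bm{\mathcal I}_h^{\rm curl}\bm{\mathcal K}_h|_{\bm V_h^{\rm curl}})^{-1}\|\,\|\bm{\mathcal I}_h^{\rm curl}\bm{\mathcal K}_h\bm v\|_{\bm L^p(\Omega)}$, and the right-hand side is $\le C\|\bm v\|_{\bm L^p(\Omega)}$ using the uniform bound on the inverse, the scale-invariant $\bm L^p$-stability of the canonical interpolant on the smooth function $\bm{\mathcal K}_h\bm v$, and the $\bm L^p$-stability of $\bm{\mathcal K}_h$; the div case is identical. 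For (4), since $\bm{\mathcal J}_h^{\rm curl}$ is an $\bm L^p$-stable projection onto $\bm V_h^{\rm curl}$, the standard argument gives $\|\bm{\mathcal J}_h^{\rm curl}\bm v-\bm v\|_{\bm L^p(\Omega)}\le(1+C)\inf_{\bm v_h\in\bm V_h^{\rm curl}}\|\bm v-\bm v_h\|_{\bm L^p(\Omega)}$, and the final inequality is classical local polynomial approximation on the regular mesh $\mathcal T_h$; likewise for $\bm{\mathcal J}_h^{\rm div}$.

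The substantive step is (5). Fix $\bm v\in\bm H({\rm curl};\Omega)$ and put $\bm w=\bm{\mathcal J}_h^{\rm curl}\bm v\in\bm V_h^{\rm curl}$. Applying $\bm{\mathcal I}_h^{\rm curl}\bm{\mathcal K}_h$ to the defining relation for $\bm w$ and using that this operator is inverted by $(\bm{\mathcal I}_h^{\rm curl}\bm{\mathcal K}_h|_{\bm V_h^{\rm curl}})^{-1}$ on $\bm V_h^{\rm curl}$ gives $\bm{\mathcal I}_h^{\rm curl}\bm{\mathcal K}_h\bm w=\bm{\mathcal I}_h^{\rm curl}\bm{\mathcal K}_h\bm v$. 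Taking $\bm\nabla\times$, invoking \Cref{can_in} in the form $\bm\nabla\times\bm{\mathcal I}_h^{\rm curl}=\bm{\mathcal I}_h^{\rm div}\bm\nabla\times$, and then the commutation $\bm\nabla\times\bm{\mathcal K}_h=\bm{\mathcal K}_h\bm\nabla\times$, yields $\bm{\mathcal I}_h^{\rm div}\bm{\mathcal K}_h(\bm\nabla\times\bm w)=\bm{\mathcal I}_h^{\rm div}\bm{\mathcal K}_h(\bm\nabla\times\bm v)$. Since $\bm\nabla\times\bm V_h^{\rm curl}\subseteq\bm V_h^{\rm div}$ (the element-wise curl of $\bm U_h$ lies in $[\mathcal P_{k-1}(K)]^3\subseteq\bm{\mathcal D}_k(K)$ and is globally divergence free, hence in $\bm H({\rm div};\Omega)$), we may apply $(\bm{\mathcal I}_h^{\rm div}\bm{\mathcal K}_h|_{\bm V_h^{\rm div}})^{-1}$ to both sides to obtain $\bm\nabla\times\bm w=\bm{\mathcal J}_h^{\rm div}(\bm\nabla\times\bm v)$, which is \eqref{curl_div_commute}.

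The parts I expect to cite rather than reprove --- and which constitute the real technical core --- are the scale-invariant $\bm L^p$-stability of the canonical interpolants $\bm{\mathcal I}_h^{\rm curl}$, $\bm{\mathcal I}_h^{\rm div}$ when applied to $\bm{\mathcal K}_h$-mollified arguments, and the uniform invertibility of $\bm{\mathcal I}_h^{\bullet}\bm{\mathcal K}_h$ on $\bm V_h^{\bullet}$; both are provided by \cite[Theorem~6.5]{Ern_CMAM_2016} and \cite[Corollary~5.4]{Ern_M2AN_2017}. Everything else is bookkeeping around the definition of $\bm{\mathcal J}_h^{\bullet}$ together with \Cref{can_in}, so the write-up should be short.
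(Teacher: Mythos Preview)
Your proposal is correct and follows exactly the route the paper intends: the paper does not give a detailed proof but simply records that the lemma follows from \Cref{can_in} together with \cite[Theorem~6.5]{Ern_CMAM_2016} and \cite[Corollary~5.4]{Ern_M2AN_2017}. You have spelled out the bookkeeping the paper leaves implicit, including the key observation $\bm\nabla\times\bm V_h^{\rm curl}\subseteq\bm V_h^{\rm div}$ needed for (5).
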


\begin{lemma}[Discrete Helmholtz decomposition]\label{Helmholtz_discrete} 
	For all $\bm v_h\in \bm U_h\cap \bm H_0({\rm curl};\Omega)$, there exist unique functions $\bm z_{h}\in \bm U_h\cap\bm H_0({\rm curl};\Omega)$ and $\xi_h\in  P_h$ such that for all $\chi_h\in  P_h$ we have 
	\begin{align}\label{decomp}
	\bm v_h=\bm z_{h}+\nabla \xi_h, \quad (\epsilon_r\bm z_{h},\nabla \chi_h)_{\mathcal T_h}=0.
	\end{align}
	Moreover,  the following stability results hold
	\begin{align}\label{boud}
	\|\nabla\xi_h\|_{\mathcal T_h}\le  C\|\bm v_h\|_{\mathcal T_h},\qquad 
	\|\bm z_h\|_{\mathcal T_h}\le  C\|\bm v_h\|_{\mathcal T_h}.
	\end{align}
	
\end{lemma}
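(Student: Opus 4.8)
The plan is to prove the discrete Helmholtz decomposition by a direct constructive argument, mimicking the continuous decomposition that motivated the whole method. Given $\bm v_h\in \bm U_h\cap \bm H_0(\mathrm{curl};\Omega)$, I would first define $\xi_h\in P_h$ as the solution of the discrete elliptic problem
\begin{align*}
(\epsilon_r\nabla\xi_h,\nabla\chi_h)_{\mathcal T_h}=(\epsilon_r\bm v_h,\nabla\chi_h)_{\mathcal T_h}\qquad\text{for all }\chi_h\in P_h.
\end{align*}
Since $P_h\subset H_0^1(\Omega)$ and the bilinear form $(\chi,\eta)\mapsto (\epsilon_r\nabla\chi,\nabla\eta)_{\mathcal T_h}$ is bounded and coercive on $H_0^1(\Omega)$ — coercivity following from Assumption \ref{Assumptions_coes}(B), namely $\mathrm{Re}(\epsilon_r)>\bar\epsilon_r>0$, together with the Poincar\'e inequality on $H_0^1(\Omega)$ — the Lax--Milgram lemma gives existence and uniqueness of $\xi_h$, and the coercivity estimate immediately yields $\|\nabla\xi_h\|_{\mathcal T_h}\le C\|\bm v_h\|_{\mathcal T_h}$, which is the first bound in \eqref{boud}. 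Then I would simply \emph{define} $\bm z_h:=\bm v_h-\nabla\xi_h$. Since $\nabla\xi_h\in\bm U_h$ (because $P_h$ consists of piecewise $\mathcal P_{k+1}$ functions, so gradients are piecewise $\mathcal P_k$, and since $\xi_h\in H_0^1(\Omega)$ its gradient lies in $\bm H_0(\mathrm{curl};\Omega)$), we get $\bm z_h\in \bm U_h\cap\bm H_0(\mathrm{curl};\Omega)$. The orthogonality relation $(\epsilon_r\bm z_h,\nabla\chi_h)_{\mathcal T_h}=0$ for all $\chi_h\in P_h$ is then exactly the defining equation for $\xi_h$ rewritten. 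Finally the triangle inequality and boundedness of multiplication by $\epsilon_r$ give $\|\bm z_h\|_{\mathcal T_h}\le \|\bm v_h\|_{\mathcal T_h}+\|\nabla\xi_h\|_{\mathcal T_h}\le C\|\bm v_h\|_{\mathcal T_h}$, which is the second bound in \eqref{boud}.

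For uniqueness of the pair $(\bm z_h,\xi_h)$, suppose $\bm v_h=\bm z_h^{(1)}+\nabla\xi_h^{(1)}=\bm z_h^{(2)}+\nabla\xi_h^{(2)}$ with both pairs satisfying the orthogonality condition. Subtracting, $\bm z_h^{(1)}-\bm z_h^{(2)}=\nabla(\xi_h^{(2)}-\xi_h^{(1)})=:\nabla\eta_h$ with $\eta_h\in P_h$. Testing the orthogonality difference against $\chi_h=\eta_h$ gives $(\epsilon_r\nabla\eta_h,\nabla\eta_h)_{\mathcal T_h}=0$; taking the real part and using coercivity forces $\nabla\eta_h=0$, hence $\eta_h=0$ (it vanishes on $\partial\Omega$), so $\xi_h^{(1)}=\xi_h^{(2)}$ and consequently $\bm z_h^{(1)}=\bm z_h^{(2)}$.

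The only genuinely nontrivial point — and the one I would be careful about — is the coercivity of the form $(\epsilon_r\nabla\cdot,\nabla\cdot)_{\mathcal T_h}$ when $\epsilon_r$ is complex-valued and merely piecewise $W^{1,\infty}$. Here one does not have coercivity of the complex form directly, but taking real parts and invoking $\mathrm{Re}(\epsilon_r)\ge\bar\epsilon_r$ a.e. gives $\mathrm{Re}\big[(\epsilon_r\nabla\chi_h,\nabla\chi_h)_{\mathcal T_h}\big]\ge\bar\epsilon_r\|\nabla\chi_h\|_{\mathcal T_h}^2$, which combined with the Poincar\'e inequality on $H_0^1(\Omega)$ (legitimate since $P_h\subset H_0^1(\Omega)$, so the broken gradient equals the true gradient) is exactly what Lax--Milgram needs in its complex/coercive-real-part form. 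Everything else is bookkeeping: checking $\nabla P_h\subset\bm U_h$ from the polynomial degrees and the continuity of $P_h$, and the routine triangle-inequality bounds. I do not anticipate any hidden obstacle beyond keeping the complex coefficient handled via real parts throughout.
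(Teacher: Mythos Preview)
Your proposal is correct and follows essentially the same approach as the paper: define $\xi_h$ via the discrete elliptic problem $(\epsilon_r\nabla\xi_h,\nabla\chi_h)_{\mathcal T_h}=(\epsilon_r\bm v_h,\nabla\chi_h)_{\mathcal T_h}$, set $\bm z_h=\bm v_h-\nabla\xi_h$, and read off the bounds and uniqueness. You supply more detail than the paper (the real-part coercivity argument, the verification that $\nabla P_h\subset \bm U_h\cap\bm H_0(\mathrm{curl};\Omega)$, and an explicit uniqueness computation), but the structure is identical.
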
	
\begin{proof} 
	For all $\chi_h\in P_h$, let $\xi_h \in P_h $ be the solution of 
	\begin{align}\label{Helmholtz_decomposition_proof1}
	(\epsilon_r\nabla\xi_h,\nabla \chi_h)_{\mathcal T_h}=(\epsilon_r \bm v_h,\nabla \chi_h)_{\mathcal T_h}.
	\end{align}
	The system \eqref{Helmholtz_decomposition_proof1} is well-defined for any $\bm v_h\in \bm U_h$, hence we have 
	\begin{align}\label{Helmholtz_decomposition_proof2}
	\|\nabla\xi_h\|_{\mathcal T_h}\le C \|\bm v_h\|_{\mathcal T_h}.
	\end{align}
	We take $\bm z_h=\bm v_h-\nabla\xi_h$, then  \eqref{decomp}  and $	\|\bm z_h\|_{\mathcal T_h}\le  C\|\bm v_h\|_{\mathcal T_h}$ hold.  Finally,  $\bm z_h$ is  unique since $\xi_h$ is the unique solution of  \eqref{Helmholtz_decomposition_proof1}.
\end{proof}

The proof of the following lemma with smooth coefficients $\mu_r$ and $\epsilon_r$ is given in \cite[Lemma 4.5]{Hiptmair_electromagnetism_Acta_2002} and \cite[Corollary 4.4]{Houston_Maxwell_NM_2005}.  {We extend the result}  to allow piecewise smooth coefficients.

\begin{lemma} \label{H_curl_app}
	For any $\bm v_h\in \bm U_h\cap \bm H_0({\rm curl};\Omega)$ satisfies
	\begin{align*} 
	(\epsilon_r \bm v_h,\nabla \chi_h)_{\bm L^2(\Omega)}=0,
	\end{align*}
	where $\chi_h\in P_h$. Let $\bm{\Theta}\in \bm H_0({\rm curl};\Omega)\cap  \bm X_0$  be the solution of
	\begin{align*}
	\bm \nabla\times\bm{\Theta}=\bm \nabla\times\bm v_h,
	\end{align*}
	then, we have the following stability estimate for some $s>0$:
	\begin{align}\label{theta_bound}
	\|\bm{\Theta}\|_{\bm H^{s}(\Omega)}\le C\|\bm \nabla\times\bm v_h\|_{\bm L^2(\Omega)},
	\end{align}
	and  the approximation property
	\begin{align}\label{theta_app}
	\|\bm v_h-\bm{\Theta}\|_{\bm L^2(\Omega)}\le Ch^{s}\|\bm \nabla\times\bm v_h\|_{\bm L^2(\Omega)}.
	\end{align}	
\end{lemma}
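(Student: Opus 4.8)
The plan is to handle the two estimates in \eqref{theta_bound} and \eqref{theta_app} separately. Existence and uniqueness of $\bm\Theta$ is classical: posing the problem of finding $\bm\Theta\in\bm H_0({\rm curl};\Omega)\cap\bm X_0$ with $(\bm\nabla\times\bm\Theta,\bm\nabla\times\bm w)_{\bm L^2(\Omega)}=(\bm\nabla\times\bm v_h,\bm\nabla\times\bm w)_{\bm L^2(\Omega)}$ for all $\bm w\in\bm H_0({\rm curl};\Omega)\cap\bm X_0$, the left-hand form is coercive on that space by the Poincar\'e--Friedrichs inequality \Cref{hip2}, so Lax--Milgram gives a unique $\bm\Theta$; splitting an arbitrary test function by the continuous Helmholtz decomposition and then testing with $\bm w=\bm\Theta-\bm v_h\in\bm H_0({\rm curl};\Omega)$ shows $\bm\nabla\times\bm\Theta=\bm\nabla\times\bm v_h$. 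For \eqref{theta_bound}, since $\bm\Theta\in\bm H_0({\rm curl};\Omega)\cap\bm X_0$, \Cref{hip1} gives $\|\bm\Theta\|_{\bm H^s(\Omega)}\le C(\|\bm\Theta\|_{\bm L^2(\Omega)}+\|\bm\nabla\times\bm\Theta\|_{\bm L^2(\Omega)})$, and \Cref{hip2} absorbs the first term into the second, leaving $\|\bm\Theta\|_{\bm H^s(\Omega)}\le C\|\bm\nabla\times\bm v_h\|_{\bm L^2(\Omega)}$.

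For the approximation estimate, the key idea is to compare $\bm v_h$ with $\bm\Theta$ through the intermediate discrete function $\bm\Theta_h:=\bm{\mathcal J}_h^{\rm curl}\bm\Theta$. By \Cref{stable_commute}(2) we have $\bm\Theta_h\in\bm V_h^{\rm curl}\cap\bm H_0({\rm curl};\Omega)=\bm U_h\cap\bm H_0({\rm curl};\Omega)$, and the commuting property \Cref{stable_commute}(5) gives $\bm\nabla\times\bm\Theta_h=\bm{\mathcal J}_h^{\rm div}(\bm\nabla\times\bm\Theta)=\bm{\mathcal J}_h^{\rm div}(\bm\nabla\times\bm v_h)$. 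Since $\bm v_h\in\bm U_h\cap\bm H_0({\rm curl};\Omega)$ is pointwise invariant under $\bm{\mathcal J}_h^{\rm curl}$ by \Cref{stable_commute}(1), the same commuting property applied to $\bm v_h$ yields $\bm{\mathcal J}_h^{\rm div}(\bm\nabla\times\bm v_h)=\bm\nabla\times(\bm{\mathcal J}_h^{\rm curl}\bm v_h)=\bm\nabla\times\bm v_h$, hence
\[
\bm\nabla\times(\bm v_h-\bm\Theta_h)=\bm 0.
\]
Thus $\bm v_h-\bm\Theta_h$ is a curl-free element of the curl-conforming piecewise-$\mathcal P_k$ subspace of $\bm H_0({\rm curl};\Omega)$, so the discrete de Rham complex on the simply connected domain $\Omega$ gives a unique $\psi_h\in P_h$ with $\bm v_h-\bm\Theta_h=\nabla\psi_h$.

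It then remains to bound $\|\nabla\psi_h\|_{\bm L^2(\Omega)}$. Taking $\chi_h=\psi_h$ in the hypothesis gives $(\epsilon_r\bm v_h,\nabla\psi_h)_{\bm L^2(\Omega)}=0$, while $\bm\Theta\in\bm X_0$ together with $\psi_h\in P_h\subset H^1_0(\Omega)$ gives $(\epsilon_r\bm\Theta,\nabla\psi_h)_{\bm L^2(\Omega)}=0$; subtracting and using $\nabla\psi_h=\bm v_h-\bm\Theta_h$ yields $(\epsilon_r\nabla\psi_h,\nabla\psi_h)_{\bm L^2(\Omega)}=(\epsilon_r(\bm\Theta-\bm\Theta_h),\nabla\psi_h)_{\bm L^2(\Omega)}$. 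Taking real parts, bounding the left side below by $\bar\epsilon_r\|\nabla\psi_h\|_{\bm L^2(\Omega)}^2$ via Assumption \ref{Assumptions_coes}(B) and the right side above by $\|\epsilon_r\|_{L^\infty(\Omega)}\|\bm\Theta-\bm\Theta_h\|_{\bm L^2(\Omega)}\|\nabla\psi_h\|_{\bm L^2(\Omega)}$, one gets $\|\nabla\psi_h\|_{\bm L^2(\Omega)}\le C\|\bm\Theta-\bm\Theta_h\|_{\bm L^2(\Omega)}$. Finally \Cref{stable_commute}(4) and \eqref{theta_bound} give $\|\bm\Theta-\bm\Theta_h\|_{\bm L^2(\Omega)}\le Ch^s\|\bm\Theta\|_{\bm H^s(\Omega)}\le Ch^s\|\bm\nabla\times\bm v_h\|_{\bm L^2(\Omega)}$, so the triangle inequality $\|\bm v_h-\bm\Theta\|_{\bm L^2(\Omega)}\le\|\nabla\psi_h\|_{\bm L^2(\Omega)}+\|\bm\Theta_h-\bm\Theta\|_{\bm L^2(\Omega)}$ yields \eqref{theta_app}. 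I expect the identity $\bm\nabla\times(\bm v_h-\bm\Theta_h)=\bm 0$ to be the crux: one must not compare $\bm v_h$ directly with $\bm\Theta$ (which is not a finite element function, so the difference is not a discrete gradient), but instead pass through $\bm\Theta_h$ and use the commuting diagram plus the invariance of $\bm v_h$, so that the error becomes an exact element of $\nabla P_h$ on which both the discrete orthogonality satisfied by $\bm v_h$ and the genuine $\epsilon_r$-divergence constraint satisfied by $\bm\Theta$ can be invoked at once.
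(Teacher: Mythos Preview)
Your proposal is correct and follows essentially the same approach as the paper: both pass through $\bm{\mathcal J}_h^{\rm curl}\bm\Theta$, use the commuting diagram together with the invariance of $\bm v_h$ to conclude that $\bm v_h-\bm{\mathcal J}_h^{\rm curl}\bm\Theta=\nabla\xi_h$ for some $\xi_h\in P_h$, and then exploit the two orthogonality relations $(\epsilon_r\bm v_h,\nabla\xi_h)=0$ and $(\epsilon_r\bm\Theta,\nabla\xi_h)=0$ to reduce the error to $\|\bm\Theta-\bm{\mathcal J}_h^{\rm curl}\bm\Theta\|_{\bm L^2(\Omega)}$. The only cosmetic differences are that the paper establishes existence of $\bm\Theta$ via a mixed formulation with a Lagrange multiplier (which is then shown to vanish) rather than Lax--Milgram on the constrained space, and the paper organizes the final step by expanding $\|\bm v_h-\bm\Theta\|_{\bm L^2(\Omega)}^2$ directly rather than first bounding $\|\nabla\psi_h\|_{\bm L^2(\Omega)}$ and then applying the triangle inequality.
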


\begin{proof}
	First, we prove the existence and uniqueness of $\bm\Theta$. For any given $\bm v_h$, we consider the following mixed problem: find $(\bm{\Theta},p)\in \bm H_0({\rm curl};\Omega)\times H^1_0(\Omega)$ such that
	\begin{subequations}\label{ax}
		\begin{align}
		(\bm\nabla\times\bm{\Theta},\bm\nabla\times\bm{w})_{\bm L^2(\Omega)}+(\overline{\epsilon_r}\nabla p,\bm w)_{\bm L^2(\Omega)}&=(\bm\nabla\times\bm v_h,\bm\nabla\times\bm w)_{\bm L^2(\Omega)},\label{ax1}\\
		(\epsilon_r\bm\Theta, \nabla q)_{\bm L^2(\Omega)}&=0\label{ax2}
		\end{align}
	\end{subequations}
	for all $(\bm{w},q)\in \bm H_0({\rm curl};\Omega)\times H^1_0(\Omega)$.  \Cref{hip2} guarantees the coercivity of $(\bm\nabla\times\bm{\Theta},\bm\nabla\times\bm{\Theta})_{\bm L^2(\Omega)}$ on $\bm H_0({\rm curl};\Omega)\cap \bm H({\rm div}^0_{\epsilon_r};\Omega)$.  The following inf-sup condition holds true simply by taking $\bm w=\nabla p$:
	\begin{align}\label{inf-sup}
	\sup_{\bm 0\neq \bm w\in\bm H_0({\rm curl};\Omega)}\frac{{\rm Re}~[(\overline{\epsilon_r}\nabla p,\bm w)_{\bm L^2(\Omega)}]}{\|\bm w\|_{\bm L^2(\Omega)}}\ge C\|\nabla p\|_{\bm L^2(\Omega)}.
	\end{align}
	Thus,   \eqref{inf-sup} leads to the  {existence and }uniqueness of  {solutions of} \eqref{ax}. Take $\bm w=\nabla p$ in  \eqref{ax1} to get $p=0$ and obtain
	\begin{align}
	\bm \nabla\times\bm{\Theta}=\bm \nabla\times\bm v_h.
	\end{align}

	Next, we prove  \eqref{theta_app}. By the \Cref{hip1,hip2}, we get the  {boundedness result}  of \eqref{theta_bound} and 
	\begin{align*}
	\bm \nabla\times(\bm v_h-\bm{\mathcal J}_h^{\rm curl}\bm{\Theta})= \bm 
	\nabla\times(\bm{\mathcal J}_h^{\rm curl}(\bm v_h-\bm{\Theta})) = \bm{\mathcal J}_h^{\rm div}(\bm \nabla\times(\bm v_h-\bm{\Theta}))=\bm 0,
	\end{align*}
	where we used the first property in \Cref{stable_commute}. Then, there exists $\xi_h\in P_h$ such that
	\begin{align*}
	\bm v_h-\bm{\mathcal J}_h^{\rm curl}\bm{\Theta}=\nabla \xi_h.
	\end{align*}
	Since $\bm \Theta \in \bm H({\rm div}_{\epsilon_r}^0;\Omega) $, then $(\epsilon_r\bm \Theta,  \nabla \xi_h)_{\mathcal T_h}=0$. This gives
	\begin{align*}
	\|\bm v_h-\bm{\Theta}\|^2_{\bm L^2(\Omega)} &\le C {\rm Re}~ [(\epsilon_r(\bm v_h-\bm{\Theta}),\bm v_h-\bm{\Theta})_{\bm L^2(\Omega)}]\\
	&= C {\rm Re}~ [(\epsilon_r(\bm v_h-\bm{\Theta}),\bm v_h-\bm{\mathcal J}_h^{\rm curl}\bm{\Theta}+\bm{\mathcal J}_h^{\rm curl}\bm{\Theta}-\bm{\Theta})_{\bm L^2(\Omega)}]\\
	&= C {\rm Re}~ [ (\epsilon_r (\bm v_h-\bm{\Theta}),\bm{\mathcal J}_h^{\rm curl}\bm{\Theta}-\bm{\Theta})_{\bm L^2(\Omega)}].
	\end{align*}
	By  \eqref{app_cul} in \Cref{stable_commute} and   \eqref{theta_app}, we have 
	\begin{align*}
	\|\bm v_h-\bm{\Theta}\|_{\bm L^2(\Omega)}\le C\|\bm{\mathcal J}_h^{\rm curl}\bm{\Theta}-\bm{\Theta}\|_{\bm L^2(\Omega)}\le Ch^{s}\|\bm{\Theta}\|_{\bm H^{s}(\Omega)}
	\le  Ch^{s}\|\bm \nabla\times\bm{v}_h\|_{\bm L^2(\Omega)}.
	\end{align*}
	This finishes our proof.	
\end{proof}

{The final result of this section is ubiquitous in the analysis of DG methods for Maxwell's equations:}

\begin{lemma}[c.f {\cite[Proposition 4.5]{Houston_Maxwell_NM_2005}}]\label{vc} 
	For all $\bm u_h\in \bm U_h$, there exists a $\bm u_h^{\rm c}\in \bm U_h\cap \bm H_0({\rm curl};\Omega)$ such that
	\begin{align}\label{est_vc}
	\|\bm u_h-\bm u_h^{\rm c}\|_{\mathcal T_h}+h	\|\bm \nabla\times(\bm u_h-\bm u_h^{\rm c})\|_{\mathcal T_h}\le C\|{\bf h}^{\frac{1}{2}}\bm n\times[\![\bm u_h]\!]\|_{\mathcal F_h}.
	\end{align}
\end{lemma}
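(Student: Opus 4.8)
The statement is the standard "conforming relative" result for discontinuous vector polynomials, and I would prove it by an averaging (Oswald / Karakashian--Pascal type) construction followed by a scaling argument. First I would fix a Lagrange-type nodal basis for $\bm U_h$ adapted to the mesh: on each tetrahedron $K$ the degrees of freedom are point values (or moments) at Lagrange nodes located in the interior of $K$, on its faces, and on its edges, chosen so that at a node lying on a face $F$ (resp. an edge $E$) the three scalar degrees of freedom split into a \emph{tangential} part (the two, resp. one, components tangent to $F$, resp. $E$) that determines the tangential trace $\bm n\times\bm u_h\times\bm n$ on $F$, and a complementary \emph{normal} part. I would then define $\bm u_h^{\rm c}\in\bm U_h$ elementwise by: keeping the interior and normal degrees of freedom of $\bm u_h|_K$ unchanged; replacing each tangential degree of freedom attached to an interior face or edge by the arithmetic mean of the corresponding values taken from all elements sharing that face or edge; and setting to zero each tangential degree of freedom attached to a face or edge contained in $\partial\Omega$.

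With this construction, conformity is immediate: on any interior face $F$ shared by $K^+$ and $K^-$, the tangential trace of $\bm u_h^{\rm c}$ from either side is determined solely by the (now single-valued, averaged) tangential face- and edge-degrees of freedom on $F$ and $\partial F$, so $\bm n\times[\![\bm u_h^{\rm c}]\!]=\bm 0$ and hence $\bm u_h^{\rm c}\in\bm H({\rm curl};\Omega)$; on $\partial\Omega$ the tangential degrees of freedom were set to zero, so $\bm n\times\bm u_h^{\rm c}=\bm 0$ there and $\bm u_h^{\rm c}\in\bm H_0({\rm curl};\Omega)$. The key observation for the estimate is that $(\bm u_h-\bm u_h^{\rm c})|_K$ is a polynomial in $[\mathcal P_k(K)]^3$ whose degrees of freedom all vanish except the tangential ones on $\partial K$ and on the edges of $\partial K$; and each of those nonvanishing values equals (value from $K$) minus (average over neighbours), which is a bounded linear combination of the tangential jumps $\bm n\times[\![\bm u_h]\!]$ on the faces (and edges) adjacent to that node. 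For a face $F\subset\partial\Omega$ one uses the convention $\bm n\times[\![\bm u_h]\!]|_F=\bm n\times\bm u_h|_F$, so the boundary modifications are controlled in the same way.

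The estimate itself is a scaling argument on the shape-regular mesh. By equivalence of norms on the finite-dimensional space $[\mathcal P_k(\widehat K)]^3$ and the usual Piola/affine scaling to $K$, one has $\|\bm u_h-\bm u_h^{\rm c}\|_K \le C h_K^{3/2}\,\|(\text{nonzero dof differences on }K)\|_{\ell^2}$, while for a jump $\bm n\times[\![\bm u_h]\!]\in[\mathcal P_k(F)]^{\!3}$ one has $\|\bm n\times[\![\bm u_h]\!]\|_F \ge c\,h_F\,\|(\text{nodal values of the jump on }F)\|_{\ell^2}$; since each dof difference on $K$ is a bounded combination of such nodal jump values and $h_F\simeq h_K$ by shape regularity, we get
\[
\|\bm u_h-\bm u_h^{\rm c}\|_K^2 \;\le\; C\,h_K \sum_{F\subset\partial K'}\|\bm n\times[\![\bm u_h]\!]\|_F^2,
\]
the sum running over faces of $K$ and of the elements sharing an edge with $K$ (a uniformly bounded set). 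The curl term follows from the inverse inequality $\|\bm\nabla\times(\bm u_h-\bm u_h^{\rm c})\|_K\le C h_K^{-1}\|\bm u_h-\bm u_h^{\rm c}\|_K$ valid on polynomials, which produces the factor $h$ multiplying the curl term on the left-hand side. Summing over $K\in\mathcal T_h$, using that every face is counted only a bounded number of times, and recalling $\mathbf h|_F=h_F$, yields \eqref{est_vc}.

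\textbf{Main obstacle.} Nothing here is deep, but the technical heart is the bookkeeping in the local step: one must set up the nodal degrees of freedom of $\bm U_h$ so that tangential and normal parts genuinely decouple on faces and edges, verify that the only surviving dof differences of $\bm u_h-\bm u_h^{\rm c}$ are tangential and are each dominated by the tangential jumps $\bm n\times[\![\bm u_h]\!]$ (not the full jump $[\![\bm u_h]\!]$), and track the shape-regularity constants through the scaling so that $C$ is independent of $h$. Care is also needed at edges and at $\partial\Omega$, where several elements meet and where the boundary modification must be absorbed into the boundary-face jump terms.
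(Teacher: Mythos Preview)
The paper does not supply its own proof of this lemma; it simply cites \cite[Proposition 4.5]{Houston_Maxwell_NM_2005} and moves on. Your averaging (Karakashian--Pascal / Oswald) construction with nodal degrees of freedom split into tangential and normal parts is precisely the standard technique used in that reference and in the DG literature for such conforming-relative estimates, so your approach is correct and in line with what the citation provides.
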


\subsection{Stability  {of the coercive discrete problem}}

Next, for all $(\bm q_h,\bm u_h, \widehat{\bm u}_h, p_h)\in \bm Q_h\times \bm U_h\times \widehat{\bm U}_h\times P_h$,  we define
\begin{align}\label{trip_morm_def}
\begin{split}
\vertiii{(\bm q_h, \bm u_h, \widehat{\bm u}_h, p_h)}_{h}&= \|\sqrt{{\rm Re}~(\mu_r)}~\bm q_h\|_{\mathcal T_h}^2 +\|\sqrt{{{\rm Re}~(\epsilon_r)}}~ \bm u_h \|^2_{\mathcal T_h}+ \|\bm \nabla\times\bm u_h\|_{\mathcal T_h}^2\\
&\quad +\|{\bf h}^{-\frac{1}{2}}\bm n\times(\bm u_h-\widehat{\bm u}_h)\|_{\partial\mathcal T_h}^2+\|\sqrt{{\rm{Re}}~(\epsilon_r)}~\nabla p_h\|_{\mathcal T_h}^2,
\end{split}
\end{align}
where $\mu_r$, $\kappa$  and $\epsilon_r$ were defined in the Assumption \ref{Assumptions_coes}. Next, the proof of the  following  lemma is trivial, hence we omit it here.
\begin{lemma}
	$\vertiii{\cdot}_h$ defines a norm on the space $ \bm Q_h\times \bm U_h\times \widehat{\bm U}_h\times P_h$.
\end{lemma}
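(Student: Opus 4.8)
The plan is to check the three defining properties of a norm for $\vertiii{\cdot}_h$ --- where, as in \eqref{trip_morm_defc}, the right-hand side of \eqref{trip_morm_def} is to be read as $\vertiii{\cdot}_h^2$ --- namely positive homogeneity, the triangle inequality, and definiteness. The first two are immediate: each of the five summands on the right-hand side of \eqref{trip_morm_def} is the square of a weighted $L^2$-seminorm (over $\mathcal T_h$ or over $\partial\mathcal T_h$), so homogeneity follows from linearity of $\bm\nabla\times$ and of the trace map, and the triangle inequality follows by regarding $\vertiii{\cdot}_h$ as the Euclidean norm on $\mathbb{R}^5$ of the vector formed by those five seminorms and applying Minkowski's inequality. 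Hence $\vertiii{\cdot}_h$ is at least a seminorm, and the only thing left to prove is definiteness.

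So suppose $\vertiii{(\bm q_h,\bm u_h,\widehat{\bm u}_h,p_h)}_h=0$, which forces every term in \eqref{trip_morm_def} to vanish. By Assumption \ref{Assumptions_coes}(B) we have ${\rm Re}(\mu_r)\ge\bar\mu_r>0$ and ${\rm Re}(\epsilon_r)\ge\bar\epsilon_r>0$ almost everywhere in $\Omega$, so $\|\sqrt{{\rm Re}(\mu_r)}\,\bm q_h\|_{\mathcal T_h}=0$ gives $\bm q_h=\bm 0$, and $\|\sqrt{{\rm Re}(\epsilon_r)}\,\bm u_h\|_{\mathcal T_h}=0$ gives $\bm u_h=\bm 0$. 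Similarly $\|\sqrt{{\rm Re}(\epsilon_r)}\,\bm\nabla p_h\|_{\mathcal T_h}=0$ forces the broken gradient of $p_h$ to vanish elementwise; since $p_h\in P_h\subset H_0^1(\Omega)$, this broken gradient coincides with the weak gradient on $\Omega$, so $\nabla p_h=\bm 0$ in $\Omega$, and because $\Omega$ is connected $p_h$ is constant, whence $p_h\equiv 0$ by the boundary condition $p_h|_{\partial\Omega}=0$.

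It remains to treat $\widehat{\bm u}_h$. Having shown $\bm u_h=\bm 0$, the vanishing of $\|{\bf h}^{-1/2}\bm n\times(\bm u_h-\widehat{\bm u}_h)\|_{\partial\mathcal T_h}$ gives $\bm n\times\widehat{\bm u}_h=\bm 0$ on every face $F\in\mathcal F_h$; combined with the constraint $\widehat{\bm u}_h\cdot\bm n|_F=0$ built into the definition of $\widehat{\bm U}_h$, the elementary identity $\widehat{\bm u}_h=(\widehat{\bm u}_h\cdot\bm n)\,\bm n-\bm n\times(\bm n\times\widehat{\bm u}_h)$ yields $\widehat{\bm u}_h=\bm 0$ on each face. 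This establishes definiteness, so $\vertiii{\cdot}_h$ is a norm. There is no substantive obstacle here; the only points where one must be careful to invoke the right hypotheses are that the argument for $p_h$ uses the connectedness of $\Omega$ (and the fact that $P_h$ is genuinely $H^1$-conforming, not merely broken), while the argument for $\widehat{\bm u}_h$ relies on the normal constraint $\widehat{\bm u}_h\cdot\bm n=0$ in $\widehat{\bm U}_h$ rather than only on the stabilization term. This is presumably why the authors call the proof trivial and omit it.
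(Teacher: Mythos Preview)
Your proof is correct and complete. The paper omits the proof entirely, calling it trivial; your argument supplies exactly the routine verification one would expect, correctly invoking Assumption~\ref{Assumptions_coes}(B) for the weighted $L^2$ terms, the $H_0^1$-conformity and connectedness of $\Omega$ for $p_h$, and the normal constraint in $\widehat{\bm U}_h$ together with the tangential identity for $\widehat{\bm u}_h$.
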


By the definition of $\vertiii{\cdot}_h$ in  \eqref{trip_morm_def}, for all $(\bm r, \bm v,\chi)\in \bm L^2(\Omega)\times \bm H_0(\rm{curl}, \Omega)\times H_0^1(\Omega)$,  it is easy to see that $\vertiii{(\bm r, \bm v, \bm n\times \bm v\times \bm n, p)}_{h}$ is well defined  {since $\bm n\times (\bm v-\bm n\times\bm v\times\bm n)=0$ on $\partial{\mathcal T}_h$.} This will be used frequently in the error analysis.

Next, by the Cauchy-Schwarz inequality and the triangle inequality we have the following  {result showing the} boundness of 	$\mathscr B_h^\pm$.
\begin{lemma}[Boundness of 	$\mathscr B_h^\pm$]
	Let $\bm \sigma_h = (\bm q_h,\bm u_h, \widehat{\bm u}_h,p_h)$, $\bm \tau_h = (\bm r_h, \bm v_h, \widehat{\bm v}_h, \chi_h) \in \bm Q_h\times\bm U_h\times  \widehat{\bm U}_h\times P_h$,  then we have
	\begin{align}\label{boundnessofBminus0}
	|\mathscr B_h^\pm(\bm \sigma_h;\bm \tau_h)|\le C\vertiii{\bm \sigma_h}_h\vertiii{\bm \tau_h}_h.
	\end{align}
\end{lemma}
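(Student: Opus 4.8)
The plan is to verify the bound term-by-term against the definition of $\mathscr{B}_h^\pm$ in \eqref{def_Bh}, using Cauchy--Schwarz on each bilinear term and then absorbing the result into the product $\vertiii{\bm\sigma_h}_h\vertiii{\bm\tau_h}_h$. Since $\mathscr{B}_h^+$ differs from $\mathscr{B}_h^-$ only by the extra term $(\kappa^2+1)(\epsilon_r\bm u_h,\bm v_h)_{\mathcal T_h}$, which is clearly controlled by $\|\sqrt{\mathrm{Re}(\epsilon_r)}\,\bm u_h\|_{\mathcal T_h}\|\sqrt{\mathrm{Re}(\epsilon_r)}\,\bm v_h\|_{\mathcal T_h}$ using Assumption \ref{Assumptions_coes}(A) (which gives $|\epsilon_r|\le C\,\mathrm{Re}(\epsilon_r)$ up to the lower bound in (B)), it suffices to treat $\mathscr{B}_h^-$; the same argument works verbatim for $\mathscr{B}_h^+$.

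First I would handle the volume terms. The term $(\mu_r\bm q_h,\bm r_h)_{\mathcal T_h}$ is bounded by $\|\sqrt{\mathrm{Re}(\mu_r)}\,\bm q_h\|_{\mathcal T_h}\|\sqrt{\mathrm{Re}(\mu_r)}\,\bm r_h\|_{\mathcal T_h}$ after writing $\mu_r = \sqrt{\mathrm{Re}(\mu_r)}\cdot(\mu_r/\sqrt{\mathrm{Re}(\mu_r)})$ and using $|\mu_r|\le C\sqrt{\mathrm{Re}(\mu_r)}\cdot\sqrt{\mathrm{Re}(\mu_r)}$, valid since $\mu_r\in L^\infty$ and $\mathrm{Re}(\mu_r)\ge\bar\mu_r>0$. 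The terms $(\bm u_h,\bm\nabla\times\bm r_h)_{\mathcal T_h}$ and $(\bm\nabla\times\bm q_h,\bm v_h)_{\mathcal T_h}$ are subtle only because $\bm\nabla\times\bm r_h$ and $\bm\nabla\times\bm q_h$ do not appear in the norm $\vertiii{\cdot}_h$. The standard fix, which I would use, is to integrate by parts element-by-element via \eqref{integration_by_parts1} so that each such term is rewritten as a combination of a $(\bm q_h,\bm\nabla\times\bm v_h)$-type term (controlled by $\|\sqrt{\mathrm{Re}(\mu_r)}\,\bm q_h\|\cdot\|\bm\nabla\times\bm v_h\|$) plus boundary terms $\langle\bm n\times\bm u_h,\bm r_h\rangle_{\partial\mathcal T_h}$, which combine with the existing numerical-trace terms. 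For the boundary terms, since $\widehat{\bm v}_h$ has no normal component and $\bm n\times(\bm v_h-\bm n\times\bm v_h\times\bm n)=0$, one uses that $\bm n\times\widehat{\bm u}_h = \bm n\times(\widehat{\bm u}_h-\bm u_h) + \bm n\times\bm u_h$ and a discrete trace inequality $\|\bm r_h\|_{\partial K}\le C h_K^{-1/2}\|\bm r_h\|_K$ on each element to obtain $|\langle\bm h^{-1}\bm n\times(\bm u_h-\widehat{\bm u}_h),\bm n\times(\bm v_h-\widehat{\bm v}_h)\rangle_{\partial\mathcal T_h}|\le \|\bm h^{-1/2}\bm n\times(\bm u_h-\widehat{\bm u}_h)\|_{\partial\mathcal T_h}\|\bm h^{-1/2}\bm n\times(\bm v_h-\widehat{\bm v}_h)\|_{\partial\mathcal T_h}$ directly, and to estimate $|\langle\bm q_h,\bm n\times\widehat{\bm v}_h\rangle_{\partial\mathcal T_h}|$ and $|\langle\bm n\times\widehat{\bm u}_h,\bm r_h\rangle_{\partial\mathcal T_h}|$ by $C\|\sqrt{\mathrm{Re}(\mu_r)}\,\bm q_h\|_{\mathcal T_h}\,\|\bm h^{-1/2}\bm n\times(\bm v_h-\widehat{\bm v}_h)\|_{\partial\mathcal T_h}$ (again after inserting $\bm u_h$ or $\bm v_h$ and using the discrete trace/inverse inequality together with the fact that $\bm n\times(\bm v_h-\widehat{\bm v}_h)$ is exactly the quantity in the norm). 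Finally the terms $(\bar\epsilon_r\nabla p_h,\bm v_h)_{\mathcal T_h}$, $(\epsilon_r\bm u_h,\nabla\chi_h)_{\mathcal T_h}$, and $(\kappa^2\epsilon_r\bm u_h,\bm v_h)_{\mathcal T_h}$ are each bounded by products of $\|\sqrt{\mathrm{Re}(\epsilon_r)}\,(\cdot)\|_{\mathcal T_h}$ norms, again using $|\epsilon_r|\le C\,\mathrm{Re}(\epsilon_r)$ and absorbing $\kappa$ into the constant.

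Collecting all these estimates and applying the discrete Cauchy--Schwarz inequality $\sum_i a_i b_i \le (\sum_i a_i^2)^{1/2}(\sum_i b_i^2)^{1/2}$ with $a_i, b_i$ the various norm components gives the claimed bound $|\mathscr{B}_h^\pm(\bm\sigma_h;\bm\tau_h)|\le C\vertiii{\bm\sigma_h}_h\vertiii{\bm\tau_h}_h$. The main obstacle is the bookkeeping around the two terms $(\bm u_h,\bm\nabla\times\bm r_h)_{\mathcal T_h}$ and $\langle\bm n\times\widehat{\bm u}_h,\bm r_h\rangle_{\partial\mathcal T_h}$: since neither $\|\bm\nabla\times\bm r_h\|$ nor a boundary norm of $\bm r_h$ alone appears in $\vertiii{\cdot}_h$, one must either keep these two terms together and integrate by parts to trade the broken curl for $\bm\nabla\times\bm v_h$ plus cancelling interface contributions, or use the element-wise inverse inequality $\|\bm\nabla\times\bm r_h\|_K\le C h_K^{-1}\|\bm r_h\|_K$ and a trace inequality to pay an $\bm h^{-1/2}$ on the boundary term — in either case care with the mesh-dependent weights $\bm h$ is needed, and this is where regularity of the mesh $\mathcal T_h$ is invoked. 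Everything else is a routine application of Cauchy--Schwarz and the coefficient bounds from Assumption \ref{Assumptions_coes}.
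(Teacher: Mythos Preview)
Your proposal is correct and in fact supplies considerably more detail than the paper, which dispatches this lemma in a single sentence (``by the Cauchy--Schwarz inequality and the triangle inequality''). Your identification of the only genuinely nontrivial step---that the pairs $-(\bm u_h,\bm\nabla\times\bm r_h)_{\mathcal T_h}-\langle\bm n\times\widehat{\bm u}_h,\bm r_h\rangle_{\partial\mathcal T_h}$ and $(\bm\nabla\times\bm q_h,\bm v_h)_{\mathcal T_h}+\langle\bm q_h,\bm n\times\widehat{\bm v}_h\rangle_{\partial\mathcal T_h}$ must first be integrated by parts via \eqref{integration_by_parts1} so that the broken curl lands on $\bm u_h$ (resp.\ $\bm v_h$) and the boundary residual becomes $\langle\bm n\times(\bm u_h-\widehat{\bm u}_h),\bm r_h\rangle_{\partial\mathcal T_h}$, after which a discrete trace inequality $\|{\bf h}^{1/2}\bm r_h\|_{\partial\mathcal T_h}\le C\|\bm r_h\|_{\mathcal T_h}$ closes the estimate---is exactly the hidden step the paper suppresses, and your handling of it is correct.
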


It is worth mentioning that the boundedness of 	$\mathscr B_h^\pm$ also holds  if $\bm \sigma_h$ is replced by $\bm\sigma=(\bm q, \bm u, \bm n\times \bm u\times \bm n, p)$ where $(\bm q, \bm u, p)\in \bm H({\rm curl};\Omega)\times \bm H_0({\rm curl};\Omega)\times H^1_0(\Omega)$,  {because} $\bm n\times (\bm u- \bm n\times \bm u\times \bm n)=\bm 0$ on each face $F\in \mathcal F_h$. Then  we have the following inequality
\begin{align}\label{boundnessofBminus1}
|\mathscr B_h^\pm(\bm\sigma;\bm \tau_h)|\le C\left(\| \bm q\|_{\mathcal T_h} +\|\bm u\|_{\mathcal T_h}+\|\bm \nabla\times \bm u\|_{\mathcal T_h}+\|\nabla p\|_{\mathcal T_h}\right)\vertiii{\bm \tau_h}_h.
\end{align}
By the inequalities \eqref{boundnessofBminus0} and \eqref{boundnessofBminus1},  {provided $\bm u_h\in H_0({\rm curl};\Omega)\cap \bm U_h$,} we have 
\begin{align}\label{boundnessofBminus}
\begin{split}
\hspace{1em}&\hspace{-1em}|\mathscr B_h^\pm(\bm\sigma - \bm \sigma_h;\bm \tau_h)|\\
&\le C\left(\| \bm q-\bm q_h\|_{\mathcal T_h} +\|\bm u-\bm u_h\|_{\mathcal T_h}+\|\bm \nabla\times( \bm u-\bm u_h)\|_{\mathcal T_h}+\|\nabla (p-p_h)\|_{\mathcal T_h}\right)\vertiii{\bm \tau_h}_h.
\end{split}
\end{align}

{In preparation for the proof of the following lemma, for any} $\bm u_h\in\bm U_h$ let  $\bm u_h^{\rm c} \in \bm U_h\cap \bm H_0({\rm curl};\Omega)$ by the conforming function in \Cref{vc},  then by \Cref{Helmholtz_discrete}, there exist $\bm y_h\in \bm U_h\cap \bm H_0({\rm curl};\Omega)$ and $\eta_h\in P_h$, such that for all $\chi_h\in P_h$ we have 
\begin{align}\label{decom1}
\bm u_h^{\rm c}=\bm y_h+\nabla \eta_h, \quad (\epsilon_r \bm y_h,\nabla \chi_h)_{\mathcal T_h}=0.
\end{align}
{We can now prove the discrete analogue of \eqref{LBBc}.}

\begin{lemma}[Discrete inf-sup conditions] \label{LBB}
	For all $\bm \sigma_h = (\bm q_h,\bm u_h, \widehat{\bm u}_h, p_h) \in  \bm \Sigma_h = \bm Q_h\times\bm U_h\times  \widehat{\bm U}_h\times P_h $, we have the following inf-sup conditions
	\begin{align}
	\sup_{\bm 0\neq \bm \tau_h\in\bm \Sigma_h}\frac{ {\rm{Re}} ~ [\mathscr B_h^+ (\bm \sigma_h;\bm \tau_h)]}{\vertiii{\bm \tau_h}_{h}}\ge C \vertiii{\bm \sigma_h}_{h}.\label{inf_sup_1}
	\end{align}
\end{lemma}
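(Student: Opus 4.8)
The plan is to mimic the proof of the continuous inf-sup condition in Lemma~\ref{LBBc}, building the test function $\bm\tau_h$ as a linear combination of several pieces, each designed to control one group of terms in $\vertiii{\bm\sigma_h}_h^2$. The norm has five contributions: $\|\sqrt{{\rm Re}(\mu_r)}\bm q_h\|^2_{\mathcal T_h}$, $\|\sqrt{{\rm Re}(\epsilon_r)}\bm u_h\|^2_{\mathcal T_h}$, $\|\bm\nabla\times\bm u_h\|^2_{\mathcal T_h}$, the stabilization term $\|{\bf h}^{-1/2}\bm n\times(\bm u_h-\widehat{\bm u}_h)\|^2_{\partial\mathcal T_h}$, and $\|\sqrt{{\rm Re}(\epsilon_r)}\nabla p_h\|^2_{\mathcal T_h}$. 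The first step is the ``easy'' choice $\bm\tau_h^{(1)}=(\bm q_h,\bm u_h,\widehat{\bm u}_h,-p_h)$: using \eqref{def_Bh} and the definition of $\mathscr B_h^+$, the real part of $\mathscr B_h^+(\bm\sigma_h;\bm\tau_h^{(1)})$ should collapse (after cancellation of the skew-symmetric coupling terms, by the symmetry of Lemma~\ref{symmertic_of_B}) to $\|\sqrt{{\rm Re}(\mu_r)}\bm q_h\|^2_{\mathcal T_h}+\|\bm u_h\|^2_{\mathcal T_h}+\|{\bf h}^{-1/2}\bm n\times(\bm u_h-\widehat{\bm u}_h)\|^2_{\partial\mathcal T_h}$, while $\vertiii{\bm\tau_h^{(1)}}_h=\vertiii{\bm\sigma_h}_h$. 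This handles three of the five terms.

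The second step is to recover $\|\bm\nabla\times\bm u_h\|^2_{\mathcal T_h}$ and $\|\sqrt{{\rm Re}(\epsilon_r)}\nabla p_h\|^2_{\mathcal T_h}$. In the conforming proof one takes $\bm\tau_2=(-\bm\nabla\times\bm u,\nabla p,0)$, but here $\bm\nabla\times\bm u_h\notin\bm Q_h$ in general and $\bm u_h$ need not be $\bm H(\mathrm{curl})$-conforming, so a direct copy fails. Instead I would work with the conforming function $\bm u_h^{\rm c}\in \bm U_h\cap \bm H_0({\rm curl};\Omega)$ from Lemma~\ref{vc} and its discrete Helmholtz decomposition $\bm u_h^{\rm c}=\bm y_h+\nabla\eta_h$ from \eqref{decom1}, together with the auxiliary field $\bm\Theta\in\bm H_0({\rm curl};\Omega)\cap\bm X_0$ with $\bm\nabla\times\bm\Theta=\bm\nabla\times\bm y_h$ supplied by Lemma~\ref{H_curl_app}. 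The candidate is something like $\bm\tau_h^{(2)}=(-\bm{\mathcal J}_h^{\rm div}(\bm\nabla\times\bm u_h^{\rm c}),\,\nabla p_h,\,\nabla p_h,\,0)$ (so the stabilization term on $\partial\mathcal T_h$ drops since $\bm n\times(\nabla p_h-\nabla p_h)=0$), or rather the quasi-interpolant thereof so that it lands in $\bm Q_h$; one then uses the commuting property \eqref{curl_div_commute}, the approximation bounds \eqref{app_div}--\eqref{app_cul}, the conforming-approximation estimate \eqref{est_vc}, and the stability bounds \eqref{boud} to show that ${\rm Re}\,[\mathscr B_h^+(\bm\sigma_h;\bm\tau_h^{(2)})]$ controls $\|\bm\nabla\times\bm u_h\|^2_{\mathcal T_h}+\|\sqrt{{\rm Re}(\epsilon_r)}\nabla p_h\|^2_{\mathcal T_h}$ up to a constant times the three terms already controlled in step one (plus higher-order $h$-factors that are absorbed), and simultaneously that $\vertiii{\bm\tau_h^{(2)}}_h\le C\vertiii{\bm\sigma_h}_h$. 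The term $(\bm q_h,\bm n\times\bm n\times\nabla p_h)_{\partial\mathcal T_h}$-type boundary contributions and the $(\epsilon_r\bm u_h,\nabla p_h)_{\mathcal T_h}$ cross term are bounded by Cauchy--Schwarz and Young's inequality, throwing a small multiple onto $\|\bm\nabla\times\bm u_h\|^2$ and $\|\nabla p_h\|^2$ and a large multiple onto the already-controlled quantities.

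The third and final step is the standard trick: set $\bm\tau_h=(C_1+1)\bm\tau_h^{(1)}+\bm\tau_h^{(2)}$ for a suitable large constant $C_1$ (coming from the Young-inequality constants in step two), so that ${\rm Re}\,[\mathscr B_h^+(\bm\sigma_h;\bm\tau_h)]\ge C\vertiii{\bm\sigma_h}_h^2$, while $\vertiii{\bm\tau_h}_h\le C\vertiii{\bm\sigma_h}_h$ by the triangle inequality; dividing gives \eqref{inf_sup_1}. The main obstacle is step two: because $\bm u_h$ is only broken-$\bm H(\mathrm{curl})$ and $\bm\nabla\times\bm u_h$ is not a legal test function in $\bm Q_h$, one must carefully route through the conforming surrogate $\bm u_h^{\rm c}$, the discrete Helmholtz splitting, and the low-regularity auxiliary field $\bm\Theta$ (here is where the piecewise-smooth coefficients and the $s<1/2$ regularity really bite, and why Lemmas~\ref{stable_commute}, \ref{H_curl_app}, \ref{vc} were set up), making sure every surrogate error term is of the form $h^s$ times something already controlled so that it can be absorbed rather than left dangling; keeping track of which constants depend on $\kappa$ and ensuring the indefinite term $-\kappa^2(\epsilon_r\bm u_h,\bm v_h)$ and its correction $(\kappa^2+1)(\epsilon_r\bm u_h,\bm v_h)$ combine to give the needed $+\|\bm u_h\|^2$ coercivity is the other bookkeeping hazard.
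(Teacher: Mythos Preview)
Your step one with $\bm\tau_h^{(1)}=(\bm q_h,\bm u_h,\widehat{\bm u}_h,-p_h)$ is correct and matches the paper. The gap is in step two: your premise that ``$\bm\nabla\times\bm u_h\notin\bm Q_h$ in general'' is false. Since $\bm u_h|_K\in[\mathcal P_k(K)]^3$, the \emph{broken} curl satisfies $(\bm\nabla\times\bm u_h)|_K\in[\mathcal P_{k-1}(K)]^3\subset[\mathcal P_m(K)]^3$ for both choices $m=k-1$ and $m=k$, so $\bm\nabla\times\bm u_h\in\bm Q_h$ outright. The paper exploits this and simply takes $\bm\tau_2=(-\bm\nabla\times\bm u_h,\bm 0,\bm 0,0)$; a short computation using the trace inequality $\|{\bf h}^{1/2}\bm\nabla\times\bm u_h\|_{\partial\mathcal T_h}\le C\|\bm\nabla\times\bm u_h\|_{\mathcal T_h}$ and Young's inequality gives $\frac12\|\bm\nabla\times\bm u_h\|_{\mathcal T_h}^2$ minus a multiple of the terms already controlled in step one. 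The $\nabla p_h$ contribution is handled by a separate test $\bm\tau_3=(\bm 0,\nabla p_h,\bm n\times\nabla p_h\times\bm n,0)$, which is admissible because $p_h\in P_h\subset H^1_0(\Omega)$ implies $\nabla p_h\in\bm U_h\cap\bm H_0({\rm curl};\Omega)$.

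Your proposed detour through $\bm u_h^{\rm c}$, the quasi-interpolants $\bm{\mathcal J}_h^{\rm curl},\bm{\mathcal J}_h^{\rm div}$, and the auxiliary field $\bm\Theta$ from Lemma~\ref{H_curl_app} is not needed here, and in fact would weaken the result: those tools inject $h^s$-factors, so your ``absorption'' would force an $h$-smallness hypothesis that the present lemma does \emph{not} carry. The low-regularity machinery you invoke is precisely what the paper deploys later, when proving the inf-sup condition for the \emph{indefinite} form $\mathscr B_h^-$ (where a true compactness/duality argument is required and $h$ small enough is assumed). For the coercive form $\mathscr B_h^+$ the argument is elementary and $h$-uniform; you have conflated the two lemmas. (The paper's proof does use one more test $\bm\tau_4=(\bm 0,\bm 0,\bm 0,\eta_h)$ built from the discrete Helmholtz decomposition \eqref{decom1}, but this step only needs Lemma~\ref{vc} and Lemma~\ref{Helmholtz_discrete}, not $\bm\Theta$ or the commuting projections.)
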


\begin{proof} 	
	First, we take $\bm \tau_1 = (\bm q_h,\bm u_h, \widehat{\bm u}_h, -p_h)$, then by the definition of $\mathscr B_h^+$ in   \eqref{def_B}, integration by parts and  the definition of $\vertiii{\cdot}_h$ in  \eqref{trip_morm_def} we get
	\begin{align}\label{LBB_proof_1}
	\begin{split}
	{\rm{Re}} ~[ \mathscr B_h^+ (\bm \sigma_h; \bm \tau_1)]&=\|\sqrt{	{\rm{Re}}~(\mu_r)}~ \bm q_h\|_{\mathcal T_h}^2+\|{\bf h}^{-\frac{1}{2}}\bm n\times(\bm u_h-\widehat{\bm u}_h)\|_{\partial\mathcal T_h}^2+\|\kappa\sqrt{{\rm{Re}}~(\epsilon_r)}~\bm u_h\|_{\mathcal{T}_h}^2\\
	\vertiii{\bm \tau_1}_h & = \vertiii{\bm \sigma_h}_h.
	\end{split}	   
	\end{align}
	
	Second, we take $\bm \tau_2 =(-\bm \nabla\times\bm u_h,\bm 0,\bm 0,0)$ to get
	\begin{align}\label{LBB_proof_2}
	\begin{split}
	\hspace{1em}&\hspace{-1em} {\rm{Re}} ~ [  \mathscr B_h^+(\bm \sigma_h;\bm \tau_2)]\\
	&={\rm{Re}} ~[ -(\mu_r\bm q_h,\bm\nabla\times\bm u_h)_{\mathcal T_h}+\|\bm \nabla\times\bm u_h\|_{\mathcal T_h}^2-\langle
	\bm n\times(\bm u_h-\widehat{\bm u}_h),\bm\nabla\times\bm u_h
	\rangle_{\partial\mathcal T_h}]\\
	&\ge  -|(\mu_r\bm q_h,\bm \nabla\times\bm u_h)_{\mathcal T_h}|+\|\bm\nabla\times\bm u_h\|_{\mathcal T_h}^2-|\langle
	\bm n\times(\bm u_h-\widehat{\bm u}_h),\bm\nabla\times\bm u_h
	\rangle_{\partial\mathcal T_h}|\\
	&\ge \frac{1}{2}\|\bm\nabla\times\bm u_h\|_{\mathcal T_h}^2 -C_1\left(\|\sqrt{	{\rm{Re}}~(\mu_r)}~\bm q_h\|_{\mathcal T_h}^2+\|{\bf h}^{-\frac{1}{2}}\bm n\times(\bm u_h-\widehat{\bm u}_h)\|_{\partial\mathcal T_h}^2\right),\\
	\vertiii{\bm \tau_2}_h &\le C_2\vertiii{\bm \sigma_h}_h.
	\end{split}
	\end{align}
	
	Third,  we take $\bm \tau_3 =(\bm 0,\nabla p_h,\bm n\times \nabla p_h\times\bm n,0)$ to get 
	\begin{align}\label{LBB_proof_4}
	\begin{split}
	{\rm{Re}} ~ [  \mathscr B_h^+(\bm \sigma_h;\bm \tau_3)]&=\|\sqrt{{\rm Re}~(\epsilon_r)}~\nabla p_h\|_{\mathcal T_h}^2+ {\rm{Re}} ~ [(\kappa^2\epsilon_r\bm u_h,\nabla p_h)_{\mathcal{T}_h}]\\
	&\ge \frac 1 2 \|\sqrt{{\rm Re}~(\epsilon_r)}~\nabla p_h\|_{\mathcal T_h}^2 -C_3\|\kappa\sqrt{{\rm{Re}}~(\epsilon_r)}~\bm u_h\|_{\mathcal{T}_h}^2,\\
	\vertiii{\bm \tau_3}_h &\le C_4\vertiii{\bm \sigma_h}_h.
	\end{split}
	\end{align}
	Next, , we take $\bm \tau_4 =(\bm 0,\bm 0, \bm 0,\eta_h)$ to get
	\begin{align*}
	\mathscr B_h^+(\bm \sigma_h;\bm \tau_4) = (\epsilon_r\bm u_h,\nabla \eta_h)_{\mathcal T_h}.
	\end{align*}
	By   \eqref{decom1} and the \Cref{vc}, we have 	
	\begin{align}\label{LBB_proof_5}
	\begin{split}
	{\rm{Re}} ~ [  \mathscr B_h^+(\bm \sigma_h;\bm \tau_4)] &=	{\rm{Re}} ~ (\epsilon_r \bm u_h^{\rm c},\nabla \eta_h)_{\mathcal T_h}+	{\rm{Re}} ~ (\epsilon_r(\bm u_h-\bm u_h^{\rm c}),\nabla \eta_h)_{\mathcal T_h}\\
	&=	{\rm{Re}} ~ [(\epsilon_r(\bm y_h+\nabla\eta_h),\nabla\eta_h)_{\mathcal T_h}]+	{\rm{Re}} ~ [(\epsilon_r(\bm u_h-\bm u_h^{\rm c}),\nabla \eta_h)_{\mathcal T_h}]\\
	&={\rm{Re}} ~  [(\epsilon_r \nabla\eta_h,\nabla\eta_h)_{\mathcal T_h}]+ {\rm{Re}} ~  [(\epsilon_r(\bm u_h-\bm u_h^{\rm c}),\nabla \eta_h)_{\mathcal T_h}]\\
	&\ge\frac{1}{2}\|\sqrt{{\rm{Re}}~(\epsilon_r)}~ \nabla\eta_h\|_{\mathcal T_h}^2-C_5\|\bm n\times[\![\bm u_h]\!]\|_{\mathcal F_h}^2\\
	&\ge\frac{1}{2}\|\sqrt{{\rm{Re}}~(\epsilon_r)}~ \nabla\eta_h\|_{\mathcal T_h}^2-C_5\|\bm n\times(\bm u_h-\widehat{\bm u}_h)\|_{\partial\mathcal T_h}^2,\\
	\vertiii{\bm \tau_4}_h &\le C_6\vertiii{\bm \sigma_h}_h.
	\end{split}
	\end{align}
	
	Finally, we take $\bm \tau_h =(1+C_1+C_3+C_5)\bm \tau_1 + \sum_{i=2}^4\bm \tau_i $. By   \eqref{LBB_proof_1} and \eqref{LBB_proof_5} we have
	\begin{align}\label{LBB_proof_6}
	\begin{split}
	{\rm{Re}} ~ [ \mathscr B_h^+(\bm \sigma_h;\bm \tau_h)]\ge C_7 \vertiii{\bm \sigma_h}_h^2,\\
	\vertiii{\bm \tau_h}_h\le   C_8\vertiii{\bm \sigma_h}_h.
	\end{split}
	\end{align}
	The above two inequalities give the desired result \eqref{inf_sup_1}.	
\end{proof}

\section{Error analysis}
\label{Error_estimates}
In this section, we give an error analysis of  the HDG approximation to Maxwell's equations given by \eqref{Maxwell_equation_HDG_form_ori}.  First, we state  main results, i.e.,  \Cref{main_res,real_main_res}. Second, we define a continuous operator $\mathscr A$ and discrete operator $\mathscr A_h$ as in \cite{Buffa_SINUM_2006}, which is a crucial step to get the error estimate without a duality argument. In the end, we provide the proof of \Cref{main_res}.

\subsection{Main result}
Before stating our main result, we introduce the standard $L^2$-orthogonal projection operator $\bm{\Pi}_m^o: \bm L^2(K)\to \bm {\mathcal P}_m(K)$, which satisfies
\begin{align}\label{def_L2}
(\bm \Pi_m^o \bm q, \bm r)_{K}=(\bm q,\bm r)_{K}, \quad \forall \bm r\in \bm {\mathcal{ P}}_m(K).
\end{align}
In the  error analysis, we shall use the following well-known bound:
\begin{align}\label{classical_ine}
\| {\bm q -\bm\Pi_m^o \bm q}\|_{\mathcal T_h} \le Ch^{s} \|{\bm q}\|_{\bm H^s(\Omega)},
\end{align}
where $\bm q \in \bm H^s(\Omega)$ and $s>0$ {is the index} guaranteed by \Cref{final_regu1}. 

Furthermore,  $p\in H^{1+s}(\Omega)$,  which means $p$ may not  continuous on $\Omega$. Therefore, the standard Lagrange interpolation operator is not applicable; hence we utilize the Scott-Zhang interpolation operator $\mathcal I_h: H^{1}(\Omega)\to P_h$.
For the Scott-Zhang interpolant we  have the  following bound bound:
\begin{align}\label{scottzhang2}
\| {\nabla (p -{ \mathcal I_h p})}\|_{\mathcal T_h} \le C  h^{s} \|{p}\|_{H^{1+s}(\Omega)}.
\end{align}

Now we can state the main result:

\begin{theorem}\label{main_res}
	Suppose that  $\mu_r$ and $\epsilon_r$ satisfy  Assumption \ref{Assumptions_coes} and  $\kappa^2$ is not an eigenvalue of  \eqref{maxwell_eig}. Then for $h$ small enough,
	\begin{description}
		\item[(A)] The HDG formulation has a unique solution $ (\bm q_h,\bm u_h, \widehat{\bm u}_h, p_h) \in  \bm Q_h\times\bm U_h\times  \widehat{\bm U}_h\times P_h$;
		
		\item [(B)] Let $(\bm q, \bm u, p)$ be the solution of \eqref{con_pro}, then we have
		\begin{align*}
		&\|\bm q -\bm q_h\|_{\mathcal T_h} + \|\bm u -\bm u_h\|_{\mathcal T_h}+\|\nabla(p - p_h)\|_{\mathcal T_h} \\&\quad \le C\left(\|\bm{\Pi}_m^o \bm q-\bm q\|_{\mathcal T_h} +\|\bm{\mathcal J}_h^{\rm curl}\bm u-\bm u\|_{\mathcal T_h} + \|\bm{\mathcal J}_h^{\rm div} (\bm \nabla\times \bm u)-\bm \nabla\times\bm u\|_{\mathcal T_h}+\|\nabla (\mathcal I_hp - p)\|_{\mathcal T_h}\right),
		\end{align*}
		where $C$ depends  on  $\kappa$, $\Omega$, $\epsilon_r$ and $\mu_r$. Here $\bm{\Pi}_m^o$ denotes $L^2$ projection (see \eqref{def_L2}).
		
	\end{description}	
\end{theorem}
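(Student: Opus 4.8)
The plan is to deduce both parts of \Cref{main_res} from a single \emph{uniform discrete inf--sup estimate for the indefinite form} $\mathscr B_h^-$, thereby avoiding the usual duality (Aubin--Nitsche) argument, which would need $\bm u\in\bm H^s(\Omega)$ with $s>1/2$ and is not available here. First I would record Galerkin orthogonality: subtracting \eqref{HDG_formulation} from \eqref{HDG_formulation1} gives $\mathscr B_h^-(\bm\sigma-\bm\sigma_h;\bm\tau_h)=0$ for all $\bm\tau_h$, where $\bm\sigma=(\bm q,\bm u,\bm n\times\bm u\times\bm n,p)$ and $\bm\sigma_h=(\bm q_h,\bm u_h,\widehat{\bm u}_h,p_h)$ is the HDG solution (to be shown to exist). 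Then I would fix the interpolant $\bm I_h\bm\sigma:=(\bm\Pi_m^o\bm q,\ \bm{\mathcal J}_h^{\rm curl}\bm u,\ \bm n\times(\bm{\mathcal J}_h^{\rm curl}\bm u)\times\bm n,\ \mathcal I_h p)$. This is the right choice for two reasons: since $\bm{\mathcal J}_h^{\rm curl}\bm u\in\bm U_h\cap\bm H_0({\rm curl};\Omega)$ (property (2) of \Cref{stable_commute}) the term $\bm n\times\bigl[(\bm u-\bm{\mathcal J}_h^{\rm curl}\bm u)-\bm n\times(\bm u-\bm{\mathcal J}_h^{\rm curl}\bm u)\times\bm n\bigr]$ vanishes on each face, so the stabilization part of $\vertiii{\bm\sigma-\bm I_h\bm\sigma}_h$ is zero; and by \eqref{curl_div_commute}, $\bm\nabla\times(\bm u-\bm{\mathcal J}_h^{\rm curl}\bm u)=\bm\nabla\times\bm u-\bm{\mathcal J}_h^{\rm div}(\bm\nabla\times\bm u)$. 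Hence $\vertiii{\bm\sigma-\bm I_h\bm\sigma}_h$ is bounded by exactly the four interpolation errors on the right of \Cref{main_res}(B), and, applying \eqref{boundnessofBminus} with the admissible choice $\bm\sigma_h=\bm I_h\bm\sigma$, the same quantity bounds $|\mathscr B_h^-(\bm\sigma-\bm I_h\bm\sigma;\bm\tau_h)|/\vertiii{\bm\tau_h}_h$.

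Granting, for all $h\le h_0$ and all $\bm\rho_h\in\bm\Sigma_h$, the estimate $\vertiii{\bm\rho_h}_h\le C\sup_{\bm 0\ne\bm\tau_h}|\mathscr B_h^-(\bm\rho_h;\bm\tau_h)|/\vertiii{\bm\tau_h}_h$ with $C$ independent of $h$, part (A) is immediate: the discrete system is square, and the estimate forces $\bm\sigma_h=\bm 0$ when $\bm f=\bm 0$, hence uniqueness and therefore existence. For (B), apply the estimate to $\bm\rho_h=\bm\sigma_h-\bm I_h\bm\sigma$, use Galerkin orthogonality to replace $\mathscr B_h^-(\bm\sigma_h-\bm I_h\bm\sigma;\bm\tau_h)$ by $\mathscr B_h^-(\bm\sigma-\bm I_h\bm\sigma;\bm\tau_h)$, bound the latter as in the previous paragraph to get $\vertiii{\bm\sigma_h-\bm I_h\bm\sigma}_h\le C\vertiii{\bm\sigma-\bm I_h\bm\sigma}_h$, and finish with the triangle inequality. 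So everything reduces to the uniform discrete inf--sup condition for $\mathscr B_h^-$.

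The hard part is proving that inf--sup condition without duality, and here I would follow the compactness strategy of \cite{Buffa_SINUM_2006}. Introduce the discrete solution operator $\mathscr A_h:\bm L^2(\Omega)\to\bm\Sigma_h$, $\bm g\mapsto\bm\phi_h$, defined by $\mathscr B_h^+(\bm\phi_h;\bm\tau_h)=(\kappa^2+1)(\epsilon_r\bm g,\bm v_h)_{\mathcal T_h}$ for all $\bm\tau_h$ (well posed and uniformly stable by \Cref{LBB}), and the continuous operator $\mathscr A:\bm L^2(\Omega)\to\bm L^2(\Omega)$ sending $\bm g$ to the $\bm u$-component of the solution of $\mathscr B^+(\cdot;\bm\tau)=(\kappa^2+1)(\epsilon_r\bm g,\bm v)$; by \Cref{Th2} (with $\kappa=1$) and \Cref{LBBc} this is well posed with $\bm H^s$ regularity, its range lies in $\bm H_0({\rm curl};\Omega)\cap\bm H({\rm div}^0_{\epsilon_r};\Omega)$, so $\mathscr A$ is compact by \Cref{hip3}. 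Since $\mathscr B_h^-=\mathscr B_h^+-(\kappa^2+1)(\epsilon_r\cdot,\cdot)$, failure of the inf--sup would yield $h_n\downarrow 0$ and $\bm\sigma_{h_n}=(\bm q_n,\bm u_n,\widehat{\bm u}_n,p_n)$ with $\vertiii{\bm\sigma_{h_n}}_{h_n}=1$ and $\delta_n:=\sup_{\bm\tau}|\mathscr B_{h_n}^-(\bm\sigma_{h_n};\bm\tau)|/\vertiii{\bm\tau}_{h_n}\to0$, and then $\vertiii{\bm\sigma_{h_n}-\mathscr A_{h_n}\bm u_n}_{h_n}\le C\delta_n$ by \Cref{LBB}. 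The crux is \emph{discrete compactness} of $\{\bm u_n\}$: using the conforming companion $\bm u_n^{\rm c}$ of \Cref{vc}, the splitting $\bm u_n^{\rm c}=\bm y_n+\nabla\eta_n$ of \Cref{Helmholtz_discrete}, and \Cref{H_curl_app} applied to $\bm y_n$ to replace it (up to an $O(h_n^s)$ error) by a genuinely $\epsilon_r$-divergence-free field bounded in $\bm H^s(\Omega)$, together with the fact that testing the defect inequality with $(\bm 0,\bm 0,\bm 0,\eta_n)$ forces $\|\nabla\eta_n\|_{\mathcal T_h}\le C(\delta_n+h_n)\to0$, one shows $\{\bm u_n\}$ is precompact in $\bm L^2(\Omega)$; pass to a subsequence $\bm u_n\to\bm u_\ast$. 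Stability of $\mathscr A_{h_n}$ plus convergence of the discrete coercive solution to the continuous one (quasi-optimality from \Cref{LBB} and the bounds \eqref{classical_ine}, \eqref{app_div}, \eqref{app_cul}, \eqref{scottzhang2}, the stabilization term handled as in the first paragraph) give that the $\bm u$-component of $\mathscr A_{h_n}\bm u_n$ converges to $\mathscr A\bm u_\ast$ in $\bm L^2$, whence $\bm u_\ast=\mathscr A\bm u_\ast$; but then the continuous tuple attached to $\bm u_\ast$ satisfies $\mathscr B^-(\cdot;\bm\tau)=0$ for all $\bm\tau$, i.e. it solves the homogeneous problem \eqref{con_pro}, so $\bm u_\ast=\bm 0$ by \Cref{final_regu1} (using that $\kappa^2$ is not an eigenvalue of \eqref{maxwell_eig}). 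Feeding $\bm u_n\to\bm 0$ back into the defect inequality, tested successively with $(\bm q_n,\bm u_n,\widehat{\bm u}_n,-p_n)$, $(\bm 0,\nabla p_n,\bm n\times\nabla p_n\times\bm n,0)$ and $(-\bm\nabla\times\bm u_n,\bm 0,\bm 0,0)$ exactly as in the proof of \Cref{LBB}, one gets successively $\|\bm q_n\|_{\mathcal T_h}$, $\|{\bf h}^{-1/2}\bm n\times(\bm u_n-\widehat{\bm u}_n)\|_{\partial\mathcal T_h}$, $\|\nabla p_n\|_{\mathcal T_h}$ and $\|\bm\nabla\times\bm u_n\|_{\mathcal T_h}$ all tending to $0$, i.e. $\vertiii{\bm\sigma_{h_n}}_{h_n}\to0$, contradicting $\vertiii{\bm\sigma_{h_n}}_{h_n}=1$. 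I expect this third paragraph to be the main obstacle: making the Buffa--Perugia compactness argument run in the hybrid setting requires the conforming reconstruction of \Cref{vc}, the discrete Helmholtz decomposition, and above all the discrete compactness packaged in \Cref{H_curl_app}, all of which are available precisely because \Cref{final_regu1} gives $\bm u\in\bm H^s(\Omega)$ for \emph{some} $s>0$ rather than $s>1/2$; a secondary subtlety is that in the final contradiction the hybrid trace $\widehat{\bm u}_h$ and the multiplier $p_h$ must be shown to be dominated by $\|\bm u_n\|_{\mathcal T_h}$, so that their vanishing is tied to that of $\bm u_\ast$.
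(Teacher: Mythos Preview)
Your proposal is correct and reaches the same conclusion, but the route differs from the paper's in one structural respect. The paper does \emph{not} prove the discrete inf--sup for $\mathscr B_h^-$ by contradiction. Instead it first establishes a \emph{quantitative} operator--perturbation estimate $\|(\mathscr A^{\bm u}-\mathscr A_h^{\bm u})\bm u_h\|_{\bm W}\le Ch^s\|\bm u_h\|_{\bm W}$ for all $\bm u_h\in\bm U_h$ (using exactly the tools you list: \Cref{vc}, \Cref{Helmholtz_discrete}, \Cref{H_curl_app}), combines this with the continuous bound $\|(\tfrac{1}{1+\kappa^2}-\mathscr A^{\bm u})\bm w\|_{\bm W}\ge C\|\bm w\|_{\bm W}$ coming from \Cref{final_regu1}, and deduces invertibility of $\tfrac{1}{1+\kappa^2}-\mathscr A_h^{\bm u}$ for $h$ small. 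This yields (A) together with a stability bound $\vertiii{\bm\sigma_h}_h\le C\|\bm f\|_{\bm L^2(\Omega)}$. The inf--sup for $\mathscr B_h^-$ is then proved \emph{constructively}: one test function is built by solving the now well--posed discrete indefinite problem with right--hand side $(\bm\Theta,\epsilon_r\bm v_h)_{\mathcal T_h}$ (where $\bm\Theta$ is the field from \Cref{H_curl_app} attached to the discretely divergence--free part of $\bm u_h^{\rm c}$), and combined with the coercive test functions from \Cref{LBB}. Part (B) then follows exactly as in your first two paragraphs.

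What each approach buys: the paper's argument is constructive and in principle gives an explicit threshold $h_0$ (since the perturbation is $O(h^s)$), at the cost of proving (A) and the inf--sup in two separate steps with (A) used inside the inf--sup proof. Your compactness/contradiction argument is more unified---both (A) and (B) fall out of a single inf--sup---but is non--constructive, and the step ``$\mathscr A_{h_n}^{\bm u}\bm u_n\to\mathscr A^{\bm u}\bm u_\ast$'' needs a little more than plain quasi--optimality: you should note that the precompactness argument in fact shows $\bm u_\ast\in\bm H({\rm div}^0_{\epsilon_r};\Omega)$ (it is an $L^2$ limit of the fields $\bm\Theta_n$, so $(\epsilon_r\bm u_\ast,\nabla q)=0$ for all $q\in H^1_0(\Omega)$), whence $\mathscr A^p\bm u_\ast=0$ and the regularity needed for the coercive quasi--optimality is available. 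With that remark your proof goes through.
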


\begin{remark}
	There exist many papers utilizing variants of HDG method to discretize Maxwell equations, for example,  \cite{Fen_CMAM_2016,Chen_CMAME_2018,Lu_Mathcomp_2017,Christophe_AMC_2018,Chen_maxwell_HDG_2017}. However, to the best of our knowledge, these papers do not allow piecewise smooth coefficients. Our main result \Cref{main_res} relaxes the requirements on the coefficients.  We note that a similar result has also been recently obtained in \cite[Theorem 3.3]{Ern_CMAM_2018}  for conforming edge elements.
	
	By the approximation {properties of} $\bm{\mathcal J}_h^{\rm div}$, $\bm{\Pi}_m^o$ and  $\mathcal I_h$ in \eqref{app_div}, \eqref{classical_ine} and \eqref{scottzhang2} and using the regularity result in \Cref{final_regu1}, we have:
	\begin{corollary}\label{real_main_res} 
		For general coefficients satisfying Assumption \ref{Assumptions_coes}, and assuming $\kappa^2$ is not a Maxwell eigenvalue, then 
		\begin{align*}
		\|\bm q -\bm q_h\|_{\mathcal T_h} + \|\bm u -\bm u_h\|_{\mathcal T_h}+\|\nabla(p - p_h)\|_{\mathcal T_h} \le Ch^s(\|\bm f\|_{\bm L^2(\Omega)}+\|\bm \nabla\cdot\bm f\|_{L^2(\Omega)}),
		\end{align*}
		where  $s>0$ {and $C$ depends  on $s$, $\kappa$,} $\Omega$, $\epsilon_r$ and $\mu_r$.
	\end{corollary}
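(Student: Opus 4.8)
The plan is to combine Galerkin orthogonality for $\mathscr B_h^-$, the discrete inf--sup stability of $\mathscr B_h^+$ from \Cref{LBB}, and a Fredholm/compactness perturbation argument built around a pair of solution operators $\mathscr A,\mathscr A_h$ in the spirit of \cite{Buffa_SINUM_2006}, thereby avoiding the classical duality argument (which would need solution regularity $\bm H^s$, $s>1/2$, unavailable here). \emph{Step 1 (a quasi-optimal bound).} Write $\bm\sigma:=(\bm q,\bm u,\bm u,p)$ for the exact solution of \eqref{con_pro}; since $\bm\nabla\times\bm q=\bm f+\kappa^2\epsilon_r\bm u-\overline{\epsilon_r}\nabla p\in\bm L^2(\Omega)$ it lies in $\bm H({\rm curl};\Omega)\times\bm H_0({\rm curl};\Omega)\times H^1_0(\Omega)$, so \eqref{HDG_formulation1} and \eqref{HDG_formulation} give $\mathscr B_h^-(\bm\sigma-\bm\sigma_h;\bm\tau_h)=0$ for all $\bm\tau_h\in\bm\Sigma_h$. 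Choose $\bm\sigma_I:=(\bm{\Pi}_m^o\bm q,\,\bm{\mathcal J}_h^{\rm curl}\bm u,\,\bm n\times(\bm{\mathcal J}_h^{\rm curl}\bm u)\times\bm n,\,\mathcal I_h p)\in\bm\Sigma_h$; by \Cref{stable_commute} we have $\bm{\mathcal J}_h^{\rm curl}\bm u\in\bm U_h\cap\bm H_0({\rm curl};\Omega)$, $\bm\nabla\times(\bm{\mathcal J}_h^{\rm curl}\bm u)=\bm{\mathcal J}_h^{\rm div}(\bm\nabla\times\bm u)$ by \eqref{curl_div_commute}, and the jump seminorm in $\vertiii{\cdot}_h$ of $\bm\sigma-\bm\sigma_I$ vanishes, so $\vertiii{\bm\sigma-\bm\sigma_I}_h\le C\Theta(h)$ with
\[\Theta(h):=\|\bm{\Pi}_m^o\bm q-\bm q\|_{\mathcal T_h}+\|\bm{\mathcal J}_h^{\rm curl}\bm u-\bm u\|_{\mathcal T_h}+\|\bm{\mathcal J}_h^{\rm div}(\bm\nabla\times\bm u)-\bm\nabla\times\bm u\|_{\mathcal T_h}+\|\nabla(\mathcal I_h p-p)\|_{\mathcal T_h}.\]
Setting $\bm e_h:=\bm\sigma_I-\bm\sigma_h$, applying \Cref{LBB} to $\bm e_h$, splitting $\mathscr B_h^+(\bm e_h;\cdot)=\mathscr B_h^+(\bm\sigma_I-\bm\sigma;\cdot)+\mathscr B_h^+(\bm\sigma-\bm\sigma_h;\cdot)$ and using $\mathscr B_h^+=\mathscr B_h^-+(\kappa^2+1)(\epsilon_r\,\cdot\,,\,\cdot\,)_{\mathcal T_h}$ together with Galerkin orthogonality so that $\mathscr B_h^+(\bm\sigma-\bm\sigma_h;\bm\tau_h)=(\kappa^2+1)(\epsilon_r(\bm u-\bm u_h),\bm v_h)_{\mathcal T_h}$, and invoking the boundedness bound \eqref{boundnessofBminus} on $\bm\sigma_I-\bm\sigma$, yields the quasi-optimal estimate $\vertiii{\bm\sigma-\bm\sigma_h}_h\le C\big(\Theta(h)+\|\bm u-\bm u_h\|_{\mathcal T_h}\big)$.

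\emph{Step 2 (absorbing $\|\bm u-\bm u_h\|_{\mathcal T_h}$ and proving \textup{(A)}).} First, the Lagrange multiplier decouples: taking $\bm v_h=\nabla\chi_h$ in \eqref{Maxwell_equation_HDG_form_ori_2} and using \eqref{Maxwell_equation_HDG_form_ori_3}--\eqref{Maxwell_equation_HDG_form_ori_4} shows $p_h$ is simply the conforming Galerkin approximation of $p$, hence $\|\nabla(p-p_h)\|_{\mathcal T_h}\le C\|\nabla(\mathcal I_h p-p)\|_{\mathcal T_h}$ by Céa's lemma. For the remaining variables I introduce the continuous solution operator $\mathscr A$ on the $\epsilon_r$-divergence-free subspace and its HDG analogue $\mathscr A_h$ on the discretely $\epsilon_r$-divergence-free space $\{\bm v_h\in\bm U_h:(\epsilon_r\bm v_h,\nabla\chi_h)_{\mathcal T_h}=0\ \forall\chi_h\in P_h\}$, where $\mathscr A\bm g$, $\mathscr A_h\bm g$ are the $\bm u$-components of the solutions of the coercive problems with right-hand side $(\epsilon_r\bm g,\,\cdot\,)_{\bm L^2(\Omega)}$, well posed by \Cref{LBBc}/\Cref{Th2} and by \Cref{LBB}; then \eqref{con_pro} and \eqref{HDG_formulation} reduce to $(\bm I-(\kappa^2+1)\mathscr A)\bm u=\bm g_0$ and $(\bm I-(\kappa^2+1)\mathscr A_h)\bm u_h=\bm g_{0,h}$ with data built from $\bm f$ (and $p_h$). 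By \Cref{Th2} and the compact embedding \Cref{hip3} the operator $\mathscr A$ is compact, and the non-resonance hypothesis on $\kappa^2$ is exactly the injectivity of $\bm I-(\kappa^2+1)\mathscr A$, so this operator is boundedly invertible. Feeding the data $\epsilon_r\bm g$ into the Step-1 estimate — the corresponding solution has zero $p$-part and only $\bm H^s$-regularity, $s>0$, by \Cref{Th2} — gives $\|(\mathscr A-\mathscr A_h)\bm g\|_{\bm L^2(\Omega)}\le C\,\Theta_{\bm g}(h)\le C\,h^s\|\bm g\|_{\bm L^2(\Omega)}$, i.e.\ norm convergence $\mathscr A_h\to\mathscr A$; here it is essential that the quasi-interpolants $\bm{\mathcal J}_h^{\rm curl},\bm{\mathcal J}_h^{\rm div}$ of \Cref{stable_commute} are bounded on $\bm L^1(\Omega)$, so the estimate survives $s<1/2$ — this is precisely what replaces the duality step. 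A standard perturbation argument then makes $\bm I-(\kappa^2+1)\mathscr A_h$ boundedly invertible, uniformly in $h$, for $h$ small; this proves (A), and substituting the resulting $\|\bm u-\bm u_h\|_{\mathcal T_h}\le C\Theta(h)$ into the Step-1 estimate (together with the $p$-bound) proves (B), since the left-hand side of (B) is $\le\vertiii{\bm\sigma-\bm\sigma_h}_h\le C\Theta(h)$.

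\emph{Main obstacle.} I expect Step 2 to be the hard part, because of the non-conformity of $\bm U_h$: $\bm u_h\notin\bm H({\rm curl};\Omega)$ and is only \emph{discretely} $\epsilon_r$-divergence free, so comparing $\mathscr A_h$ with the compact operator $\mathscr A$ requires, wherever a conforming competitor or the divergence constraint is used, replacing $\bm u_h$ by $\bm u_h^{\rm c}\in\bm U_h\cap\bm H_0({\rm curl};\Omega)$ via \Cref{vc} at the price $\|{\bf h}^{1/2}\bm n\times[\![\bm u_h]\!]\|_{\mathcal F_h}\le C\vertiii{\bm\sigma-\bm\sigma_h}_h$, then splitting $\bm u_h^{\rm c}$ with the discrete Helmholtz decomposition \Cref{Helmholtz_discrete} and controlling its solenoidal part through \Cref{H_curl_app}. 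Threading these corrections through the operator-norm convergence $\mathscr A_h\to\mathscr A$ — while bridging the conforming/non-conforming gap between the discrete and continuous divergence-free spaces, using only the minimal regularity $\bm H^s$, $s>0$ (which forces the mollified quasi-interpolants in place of the canonical ones), and keeping every constant independent of $h$ — is where the real work lies.

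\emph{Corollary.} Finally, \Cref{real_main_res} follows from (B) by bounding each term of $\Theta(h)$ with the approximation estimates \eqref{app_div}, \eqref{classical_ine}, \eqref{scottzhang2} and the regularity $\bm u,\bm\nabla\times\bm u\in\bm H^s(\Omega)$, $p\in H^{1+s}(\Omega)$ supplied by \Cref{final_regu1} and its corollary.
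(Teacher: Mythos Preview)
Your proposal is correct and follows essentially the same Buffa--Perugia operator-perturbation framework as the paper: define the coercive solution operators $\mathscr A,\mathscr A_h$ via $\mathscr B^+,\mathscr B_h^+$, establish norm convergence $\mathscr A_h^{\bm u}\to\mathscr A^{\bm u}$ on the sum space $\bm W=\bm U_h+\bm H_0({\rm curl};\Omega)$ using the machinery you identify (\Cref{vc}, \Cref{Helmholtz_discrete}, \Cref{H_curl_app}, and the stable commuting quasi-interpolants of \Cref{stable_commute}), then transfer invertibility of $\bm I-(\kappa^2+1)\mathscr A^{\bm u}$ to the discrete level. Your ``main obstacle'' paragraph pinpoints exactly the technical heart of the argument.

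The one organizational difference worth noting: to obtain the best-approximation estimate (B), the paper does \emph{not} feed $\|\bm u-\bm u_h\|_{\mathcal T_h}$ back into your Step-1 bound. Instead it uses the uniqueness/stability result (A) to construct a test function and prove a \emph{direct} discrete inf-sup condition for the indefinite form $\mathscr B_h^-$ (the paper's \eqref{dis_inf_sup_1}); (B) then follows from Galerkin orthogonality in one line. Your route---Strang-type bound modulo $\|\bm u-\bm u_h\|$, then absorb via the operator equation on $\bm W$---also works, but your statement ``substituting the resulting $\|\bm u-\bm u_h\|_{\mathcal T_h}\le C\Theta(h)$'' needs one extra line: write $\bm\sigma_I-\bm\sigma_h=\bm\xi_h+(\kappa^2+1)\mathscr A_h(\bm u-\bm u_h)$ with $\vertiii{\bm\xi_h}_h\le C\Theta(h)$, take the $\bm u$-component, and apply the lower bound of \Cref{est} to $\bm u-\bm u_h\in\bm W$. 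Either way, the Corollary itself is then immediate from \eqref{app_div}, \eqref{app_cul}, \eqref{classical_ine}, \eqref{scottzhang2} and the regularity of \Cref{final_regu1}, exactly as you say.
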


\end{remark}

\subsection{Preliminary {estimates}}

\begin{definition}\label{def_A}
	Let $\bm g \in \bm L^2(\Omega)$, we define the operator ${\mathscr  A}:=({\mathscr A}^{\bm q},{\mathscr A}^{\bm u}, {\mathscr  A}^p): \bm L^2(\Omega) \to \bm H({\rm curl};\Omega)\times \bm H_0({\rm curl};\Omega)\times  H^1_0(\Omega)$ such that for all  $\bm \tau = (\bm r,\bm v,\chi)\in \bm H({\rm curl};\Omega)\times \bm H_0({\rm curl};\Omega)\times H^1_0(\Omega)$ we have 
	\begin{align}\label{def_Ag}
	{\mathscr B}^+({\mathscr A}\bm g; \bm \tau)	=(\epsilon_r \bm g,\bm v)_{\bm L^2(\Omega)}.
	\end{align}
\end{definition}

By the \Cref{def_A} and \Cref{Th2} we get
\begin{align}\label{reg_A_u}
\|{\mathscr A}^{\bm q} \bm g\|_{\bm H^s(\Omega)} + \|{\mathscr A}^{\bm u}\bm g\|_{\bm H^s(\Omega)} \le C\|\epsilon_r \bm g\|_{L^2(\Omega)}.
\end{align}  
Moreover, if $\bm g \in \bm H({\rm div}_{\epsilon_r}, \Omega)$, then we have 
\begin{align}\label{reg_A_p}
\|{\mathscr A}^{p} \bm g\|_{H^{1+s}(\Omega)} \le C\|\bm\nabla  \cdot (\epsilon_r \bm g)\|_{L^2(\Omega)}.
\end{align}

\begin{definition}\label{defahg}
	Let $\bm g\in \bm L^2(\Omega)$, we define the operator $\mathscr {A}_h:=({\mathscr A}_h^{\bm q},{\mathscr A}_h^{\bm u},{\mathscr A}_h^{\widehat {\bm u}}, {\mathscr A}_h^p): \bm L^2(\Omega)\to\bm Q_h\times\bm U_h\times  \widehat{\bm U}_h\times P_h$ such that
	\begin{align}\label{def_Ahg}
	{\mathscr  B}_h^+(\mathscr{A}_h\bm g;\bm \tau_h) =(\epsilon_r \bm g,\bm v_h)_{\mathcal T_h}
	\end{align} 
	for all $\bm \tau_h = (\bm r_h,\bm v_h, \widehat{\bm v}_h,\chi_h)$.
	Moreover, we define  $\bm g_h\in \bm U_h$ {to be the unique solution of}
	\begin{align*}
	(\epsilon_r \bm g_h,\bm v_h)_{\mathcal T_h}=(\bm g,\bm v_h)_{\mathcal T_h}.
	\end{align*}
\end{definition}

\begin{lemma}[Stability for $\mathscr A$ and $\mathscr A_h$] \label{stability_A}
	Let $\bm g\in \bm L^2(\Omega)$, then we have 
	\begin{subequations}
		\begin{align}
		\vertiii{\mathscr A{\bm g} }\le C\|\bm g\|_{\bm L^2(\Omega)},\label{stability_A1}\\
		\vertiii{\mathscr A_h{\bm g} }_h\le C\|\bm g\|_{\bm L^2(\Omega)}.\label{stability_A2}
		\end{align}
	\end{subequations}
	where $\vertiii{\cdot}$ and $\vertiii{\cdot}_h$ were defined in \eqref{trip_morm_defc},\eqref{trip_morm_def}, respectively.
\end{lemma}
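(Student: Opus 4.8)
The plan is to obtain \eqref{stability_A1} and \eqref{stability_A2} directly from the two inf-sup conditions already established, namely \Cref{LBBc} for the continuous problem and \Cref{LBB} for the discrete problem, combined with the defining relations \eqref{def_Ag} and \eqref{def_Ahg}. First I would prove the continuous bound: by \Cref{LBBc} applied to $\bm\sigma = \mathscr A\bm g$, there is a constant $C>0$ with
\begin{align*}
\vertiii{\mathscr A\bm g} \le C \sup_{\bm 0\ne\bm\tau}\frac{{\rm Re}~[\mathscr B^+(\mathscr A\bm g;\bm\tau)]}{\vertiii{\bm\tau}} = C\sup_{\bm 0\ne\bm\tau}\frac{{\rm Re}~[(\epsilon_r\bm g,\bm v)_{\bm L^2(\Omega)}]}{\vertiii{\bm\tau}},
\end{align*}
where $\bm\tau = (\bm r,\bm v,\chi)$. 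Since $\epsilon_r\in L^\infty(\Omega)$ (Assumption \ref{Assumptions_coes}(A)) and, by the definition of $\vertiii{\cdot}$ in \eqref{trip_morm_defc} together with Assumption \ref{Assumptions_coes}(B), we have $\|\bm v\|_{\bm L^2(\Omega)}\le C\vertiii{\bm\tau}$, the Cauchy--Schwarz inequality gives ${\rm Re}~[(\epsilon_r\bm g,\bm v)_{\bm L^2(\Omega)}]\le \|\epsilon_r\|_{L^\infty(\Omega)}\|\bm g\|_{\bm L^2(\Omega)}\|\bm v\|_{\bm L^2(\Omega)}\le C\|\bm g\|_{\bm L^2(\Omega)}\vertiii{\bm\tau}$. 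Dividing by $\vertiii{\bm\tau}$ and taking the supremum yields \eqref{stability_A1}.

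The discrete estimate \eqref{stability_A2} follows by the same argument with $\mathscr B^+$, $\vertiii{\cdot}$, \Cref{LBBc}, and $(\epsilon_r\bm g,\bm v)_{\bm L^2(\Omega)}$ replaced by $\mathscr B_h^+$, $\vertiii{\cdot}_h$, \Cref{LBB}, and $(\epsilon_r\bm g,\bm v_h)_{\mathcal T_h}$ respectively. Explicitly, \Cref{LBB} applied to $\bm\sigma_h = \mathscr A_h\bm g$ gives $\vertiii{\mathscr A_h\bm g}_h \le C\sup_{\bm 0\ne\bm\tau_h}{\rm Re}~[\mathscr B_h^+(\mathscr A_h\bm g;\bm\tau_h)]/\vertiii{\bm\tau_h}_h = C\sup_{\bm 0\ne\bm\tau_h}{\rm Re}~[(\epsilon_r\bm g,\bm v_h)_{\mathcal T_h}]/\vertiii{\bm\tau_h}_h$, and from the definition of $\vertiii{\cdot}_h$ in \eqref{trip_morm_def} with Assumption \ref{Assumptions_coes}(B) one reads off $\|\bm v_h\|_{\mathcal T_h}\le C\vertiii{\bm\tau_h}_h$; Cauchy--Schwarz then bounds the numerator by $C\|\bm g\|_{\bm L^2(\Omega)}\vertiii{\bm\tau_h}_h$, completing the proof.

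This lemma is essentially a formal consequence of results already in hand, so I do not anticipate a genuine obstacle; the only point requiring a moment's care is the elementary observation that the electric-field component of the triple norm controls $\|\bm v\|_{\bm L^2(\Omega)}$ (respectively $\|\bm v_h\|_{\mathcal T_h}$), which is immediate from the coercivity lower bound ${\rm Re}~(\epsilon_r)>\bar\epsilon_r>0$ in Assumption \ref{Assumptions_coes}(B). One should also note that $\mathscr A_h\bm g$ is well defined (and hence the supremum in \Cref{LBB} is legitimately applied to it): existence and uniqueness of the solution of the square linear system \eqref{def_Ahg} follow from the discrete inf-sup condition \eqref{inf_sup_1}, which guarantees injectivity of $\mathscr B_h^+$ on the finite-dimensional space $\bm\Sigma_h$, and similarly $\mathscr A\bm g$ is well defined by \Cref{Th2} as indicated in \Cref{def_A}.
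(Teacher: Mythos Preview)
Your proposal is correct and follows essentially the same route as the paper's own proof: apply the continuous inf-sup condition \Cref{LBBc} (respectively the discrete one \Cref{LBB}) to $\mathscr A\bm g$ (respectively $\mathscr A_h\bm g$), replace the numerator via the defining relation \eqref{def_Ag} (respectively \eqref{def_Ahg}), and bound $(\epsilon_r\bm g,\bm v)$ by $C\|\bm g\|_{\bm L^2(\Omega)}\vertiii{\bm\tau}$. Your additional remarks on why $\|\bm v\|_{\bm L^2(\Omega)}\le C\vertiii{\bm\tau}$ and on the well-posedness of $\mathscr A_h\bm g$ are sound but the paper leaves them implicit.
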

\begin{proof}
	Let  $ \bm \tau = (\bm r, \bm v, \chi)\in \bm L^2(\Omega)\times  \bm H_0({\rm curl};\Omega)\times H^1_0(\Omega)$, by \Cref{LBBc} and  \Cref{def_A} we get
	\begin{align*}
	\vertiii{\mathscr A{\bm g} } \le C	\sup_{ \bm \tau \neq \bm 0}\frac{{\rm{Re}}~ [\mathscr B^+ (\mathscr A{\bm g};\bm \tau)]}{\vertiii{\bm \tau}}=C\sup_{\bm \tau \neq \bm 0 }\frac{{\rm{Re}}~[(\epsilon_r \bm g,\bm v)_{\bm L^2(\Omega)}]}{\vertiii{\bm \tau }}\le C\|\bm g\|_{\bm L^2(\Omega)}.
	\end{align*}
	Next, let $\bm \tau_h = (\bm r_h,\bm v_h, \widehat{\bm v}_h,\chi_h) \in \bm Q_h\times\bm U_h\times  \widehat{\bm U}_h\times P_h$. We use   \Cref{LBB} and \Cref{def_A} to get
	\begin{align*}
	\vertiii{\mathscr A_h{\bm g} }_h\le C	\sup_{\bm \tau_h \neq \bm 0 }\frac{ {\rm{Re}} ~ [\mathscr B_h^+ (\mathscr A_h{\bm g}; \bm \tau_h)]}{\vertiii{\bm \tau_h}_{h}}= C	\sup_{\bm \tau_h \neq \bm 0 }\frac{ {\rm{Re}} ~ [ (\epsilon_r\bm g;\bm v_h)_{\mathcal T_h}]}{\vertiii{\bm \tau_h}_{h}}\le C\|\bm g\|_{\bm L^2(\Omega)}.
	\end{align*}
\end{proof}

Next, we define 
\begin{align}\label{widetilde_A_g}
\widetilde{\mathscr A \bm g} = ({\mathscr A}^{\bm q} \bm g,{\mathscr A}^{\bm u} \bm g,{\mathscr A}^{\bm u} \bm g, {\mathscr  A}^p \bm g),
\end{align}
where $\mathscr A \bm g$ was defined in \eqref{def_Ag}. Then for all $\bm \tau_h = (\bm r_h, \bm v_h, \widehat{\bm v}_h, \chi_h)\in  \bm Q_h\times\bm U_h\times  \widehat{\bm U}_h\times P_h$,  by the definition of $\mathscr B_h^+$ in \eqref{def_Bh},  \eqref{def_Ag} and the definition of $\mathscr B^+$ in \eqref{def_Bplus} we have 
\begin{align}\label{important_rela}
\mathscr B_h^+(\widetilde{\mathscr A}\bm g;\bm \tau_h) = (\epsilon_r\bm g, \bm v_h)_{\mathcal T_h}={\mathscr  B}_h^+(\mathscr{A}_h\bm g;\bm \tau_h).
\end{align}

\begin{lemma} 
	For any $\bm g \in \bm H({\rm div}_{\epsilon_r};\Omega)$, we have
	\begin{align}\label{estAuAuh}
	\vertiii{\widetilde{\mathscr A} \bm g -\mathscr A_h \bm g}_h \le Ch^s(\|\bm g\|_{\bm L^2(\Omega)}+\|\bm \nabla\cdot(\epsilon_r\bm g)\|_{L^2(\Omega)}).
	\end{align}
\end{lemma}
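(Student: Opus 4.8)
The plan is to estimate $\vertiii{\widetilde{\mathscr A}\bm g - \mathscr A_h\bm g}_h$ using the discrete inf-sup condition of \Cref{LBB} together with a Galerkin-orthogonality-type relation. First, I would use \eqref{important_rela}, which says $\mathscr B_h^+(\widetilde{\mathscr A}\bm g;\bm\tau_h) = \mathscr B_h^+(\mathscr A_h\bm g;\bm\tau_h)$ for all discrete $\bm\tau_h$; hence for any discrete $\bm\zeta_h\in\bm\Sigma_h$ we have $\mathscr B_h^+(\mathscr A_h\bm g - \bm\zeta_h;\bm\tau_h) = \mathscr B_h^+(\widetilde{\mathscr A}\bm g - \bm\zeta_h;\bm\tau_h)$. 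Taking $\bm\zeta_h$ to be a suitable discrete interpolant of $\widetilde{\mathscr A}\bm g$ — namely $\bm\zeta_h = (\bm\Pi_m^o\mathscr A^{\bm q}\bm g,\ \bm{\mathcal J}_h^{\rm curl}\mathscr A^{\bm u}\bm g,\ \bm{\mathcal J}_h^{\rm curl}\mathscr A^{\bm u}\bm g,\ \mathcal I_h\mathscr A^p\bm g)$ — and applying \eqref{inf_sup_1} to $\mathscr A_h\bm g - \bm\zeta_h$, I get
\begin{align*}
\vertiii{\mathscr A_h\bm g - \bm\zeta_h}_h \le C\sup_{\bm 0\neq\bm\tau_h}\frac{{\rm Re}~[\mathscr B_h^+(\widetilde{\mathscr A}\bm g - \bm\zeta_h;\bm\tau_h)]}{\vertiii{\bm\tau_h}_h}.
\end{align*}
Since $\mathscr A^{\bm u}\bm g\in\bm H_0({\rm curl};\Omega)$ and $\bm{\mathcal J}_h^{\rm curl}$ preserves this space (property (2) of \Cref{stable_commute}), the second component of $\bm\zeta_h$ lies in $\bm U_h\cap\bm H_0({\rm curl};\Omega)$, so the boundedness estimate \eqref{boundnessofBminus} applies to $\mathscr B_h^+(\widetilde{\mathscr A}\bm g - \bm\zeta_h;\bm\tau_h)$, giving an upper bound in terms of $\|\mathscr A^{\bm q}\bm g - \bm\Pi_m^o\mathscr A^{\bm q}\bm g\|_{\mathcal T_h}$, $\|\mathscr A^{\bm u}\bm g - \bm{\mathcal J}_h^{\rm curl}\mathscr A^{\bm u}\bm g\|_{\mathcal T_h}$, $\|\bm\nabla\times(\mathscr A^{\bm u}\bm g - \bm{\mathcal J}_h^{\rm curl}\mathscr A^{\bm u}\bm g)\|_{\mathcal T_h}$, and $\|\nabla(\mathscr A^p\bm g - \mathcal I_h\mathscr A^p\bm g)\|_{\mathcal T_h}$.

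Next I would bound each of these four interpolation errors. For the first, \eqref{classical_ine} and the regularity \eqref{reg_A_u} give $Ch^s\|\mathscr A^{\bm q}\bm g\|_{\bm H^s(\Omega)}\le Ch^s\|\epsilon_r\bm g\|_{L^2(\Omega)}$. For the second, \eqref{app_cul} of \Cref{stable_commute} with $p=2$ and \eqref{reg_A_u} give $Ch^s\|\mathscr A^{\bm u}\bm g\|_{\bm H^s(\Omega)}\le Ch^s\|\epsilon_r\bm g\|_{L^2(\Omega)}$. For the third, I would invoke the commuting property \eqref{curl_div_commute}: $\bm\nabla\times(\bm{\mathcal J}_h^{\rm curl}\mathscr A^{\bm u}\bm g) = \bm{\mathcal J}_h^{\rm div}(\bm\nabla\times\mathscr A^{\bm u}\bm g)$, so this term equals $\|\bm\nabla\times\mathscr A^{\bm u}\bm g - \bm{\mathcal J}_h^{\rm div}(\bm\nabla\times\mathscr A^{\bm u}\bm g)\|_{\mathcal T_h}$, bounded via \eqref{app_div} by $Ch^s\|\bm\nabla\times\mathscr A^{\bm u}\bm g\|_{\bm H^s(\Omega)}\le Ch^s\|\epsilon_r\bm g\|_{L^2(\Omega)}$, again using \eqref{reg_A_u}. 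For the fourth, since $\bm g\in\bm H({\rm div}_{\epsilon_r};\Omega)$ we have the extra regularity \eqref{reg_A_p}, so \eqref{scottzhang2} gives $Ch^s\|\mathscr A^p\bm g\|_{H^{1+s}(\Omega)}\le Ch^s\|\bm\nabla\cdot(\epsilon_r\bm g)\|_{L^2(\Omega)}$. Since $\|\epsilon_r\bm g\|_{L^2(\Omega)}\le C\|\bm g\|_{\bm L^2(\Omega)}$ by Assumption \ref{Assumptions_coes}(A), collecting these four bounds yields $\vertiii{\mathscr A_h\bm g - \bm\zeta_h}_h\le Ch^s(\|\bm g\|_{\bm L^2(\Omega)} + \|\bm\nabla\cdot(\epsilon_r\bm g)\|_{L^2(\Omega)})$.

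Finally, a triangle inequality $\vertiii{\widetilde{\mathscr A}\bm g - \mathscr A_h\bm g}_h \le \vertiii{\widetilde{\mathscr A}\bm g - \bm\zeta_h}_h + \vertiii{\bm\zeta_h - \mathscr A_h\bm g}_h$ finishes the argument, provided I also control the first term $\vertiii{\widetilde{\mathscr A}\bm g - \bm\zeta_h}_h$ directly; this is a sum of the same interpolation errors (with the stabilization seminorm $\|{\bf h}^{-1/2}\bm n\times(\mathscr A^{\bm u}\bm g - \bm{\mathcal J}_h^{\rm curl}\mathscr A^{\bm u}\bm g)\|_{\partial\mathcal T_h}$ handled by a standard trace/scaling inequality combined with \eqref{app_cul} and an inverse-type estimate, since $\mathscr A^{\bm u}\bm g - \bm{\mathcal J}_h^{\rm curl}\mathscr A^{\bm u}\bm g$ has piecewise polynomial part, or more directly bounded using $\bm H^s$-regularity and a fractional trace inequality). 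I expect the main obstacle to be this last point — estimating the jump/stabilization seminorm term in $\vertiii{\cdot}_h$ on the interpolation error, because $\mathscr A^{\bm u}\bm g$ only has $\bm H^s$ regularity with $s<1/2$, so a naive trace inequality fails; one must exploit that the error $\mathscr A^{\bm u}\bm g - \bm{\mathcal J}_h^{\rm curl}\mathscr A^{\bm u}\bm g$ decomposes via the mollification construction, or invoke the stability of $\bm{\mathcal J}_h^{\rm curl}$ in $\bm L^p$ for $p$ slightly above $2$ together with a scaled trace inequality, to recover the $h^s$ rate cleanly.
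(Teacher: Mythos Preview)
Your approach is essentially identical to the paper's: same interpolant $\bm\zeta_h$, same use of the discrete inf-sup \eqref{inf_sup_1} combined with the consistency relation \eqref{important_rela} and the boundedness \eqref{boundnessofBminus}, same invocation of the commuting property \eqref{curl_div_commute} for the curl term, and the same regularity estimates \eqref{reg_A_u}--\eqref{reg_A_p}.

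The obstacle you flag at the end, however, is not real. Recall that the stabilization contribution in $\vertiii{(\bm r,\bm v,\widehat{\bm v},\chi)}_h$ is $\|{\bf h}^{-1/2}\bm n\times(\bm v-\widehat{\bm v})\|_{\partial\mathcal T_h}$, i.e.\ it measures the tangential mismatch \emph{between the second and third components}. In $\widetilde{\mathscr A}\bm g$ the second and third components are both $\mathscr A^{\bm u}\bm g$; in your $\bm\zeta_h$ they are both (the tangential trace of) $\bm{\mathcal J}_h^{\rm curl}\mathscr A^{\bm u}\bm g$. Hence for the difference $\widetilde{\mathscr A}\bm g-\bm\zeta_h$ the second and third components coincide, and the stabilization term vanishes identically:
\[
\bm n\times\bigl((\mathscr A^{\bm u}\bm g-\bm{\mathcal J}_h^{\rm curl}\mathscr A^{\bm u}\bm g)-(\mathscr A^{\bm u}\bm g-\bm{\mathcal J}_h^{\rm curl}\mathscr A^{\bm u}\bm g)\bigr)=\bm 0.
\]
This is exactly the observation the paper makes just after \eqref{trip_morm_def} (``$\vertiii{(\bm r,\bm v,\bm n\times\bm v\times\bm n,p)}_h$ is well defined since $\bm n\times(\bm v-\bm n\times\bm v\times\bm n)=0$''). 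So no fractional trace inequality or $L^p$ stability trick is needed; $\vertiii{\widetilde{\mathscr A}\bm g-\bm\zeta_h}_h$ reduces directly to the four volume interpolation errors you already bounded, and the proof closes cleanly.
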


\begin{proof}
	First, let  $ \mathscr I_h \left(\widetilde{\mathscr A} \bm g\right) = (\bm{\Pi}_m^o(\mathscr A^{\bm q}\bm g), \bm{\mathcal J}_h^{\rm curl}(\mathscr A^{\bm u}\bm g), \bm n\times(\bm{\mathcal J}_h^{\rm curl} (\mathscr A^{\bm u}\bm g))\times\bm n,\mathcal I_h(\mathscr A^p\bm g))\in \bm Q_h\times\bm U_h\times  \widehat{\bm U}_h\times P_h $, where $\bm{\Pi}_m^o$ was  defined in \eqref{def_L2} and  $\mathcal I_h$ is the Scott-Zhang interpolant.   Then 
	\begin{align}
	\hspace{1em}&\hspace{-1em}\vertiii{\mathscr I_h \left(\widetilde{\mathscr A} \bm g\right) -\mathscr A_h \bm g}_h\nonumber\\
	& \le C\sup_{\bm 0\neq \bm{\tau}_h\in\bm{\Sigma}_h}\frac{{\rm Re}~\left[\mathscr  B_h^+\left(\mathscr I_h \left(\widetilde{\mathscr A} \bm g\right) -\mathscr A_h\bm g;\bm{\tau}_h\right)\right]}{\vertiii{\bm{\tau}_h}_h}&\textup{ by }\eqref{inf_sup_1}\nonumber\\
	&\le C	\sup_{\bm 0\neq \bm{\tau}_h\in\bm{\Sigma}_h}\frac{{\rm Re}~ \left[\mathscr  B_h^+\left(\mathscr I_h \left(\widetilde{\mathscr A} \bm g\right)-\widetilde{\mathscr A}\bm g;\bm{\tau}_h\right)\right]}{\vertiii{\bm{\tau}_h}_h}&\textup{ by }\eqref{important_rela}\nonumber\\
	&\le C\vertiii{\mathscr I_h \left(\widetilde{\mathscr A} \bm g\right) - \widetilde{\mathscr A}\bm g}_h&\textup{ by }\eqref{boundnessofBminus}. \label{bounded_inter}
	\end{align}
	Next by the triangle inequality we have 
	\begin{align*}
	\hspace{1em}&\hspace{-1em}\vertiii{\widetilde{\mathscr A} \bm g -\mathscr A_h \bm g}_h \\
	&\le \vertiii{ \widetilde{\mathscr A}\bm g-\mathscr I_h \left(\widetilde{\mathscr A} \bm g\right) }_h  + \vertiii{\mathscr I_h \left(\widetilde{\mathscr A} \bm g\right)-\mathscr A_h \bm g}_h \\
	&\le C \vertiii{ \widetilde{\mathscr A}\bm g-\mathscr I_h \left(\widetilde{\mathscr A} \bm g\right) }_h &\textup{ by }\eqref{bounded_inter}\\
	& \le C\left(\|\bm{\Pi}_m^o(\mathscr A^{\bm q}\bm g)-\mathscr A^{\bm q}\bm g\|_{\mathcal T_h} +\|\bm{\mathcal J}_h^{\rm curl}(\mathscr A^{\bm u}\bm g)-\mathscr A^{\bm u}\bm g\|_{\mathcal T_h}\right.\\
	&\quad + \left.\|\bm \nabla\times (\bm{\mathcal J}_h^{\rm curl}(\mathscr A^{\bm u}\bm g)-\mathscr A^{\bm u}\bm g)\|_{\mathcal T_h}+\| \nabla (\mathcal I_h(\mathscr A^p\bm g)-\mathscr A^p\bm g)\|_{\mathcal T_h}\right)&\textup{ by \eqref{boundnessofBminus}}\\
	&=C\left(\|\bm{\Pi}_m^o(\mathscr A^{\bm q}\bm g)-\mathscr A^{\bm q}\bm g\|_{\mathcal T_h} +\|\bm{\mathcal J}_h^{\rm curl}(\mathscr A^{\bm u}\bm g)-\mathscr A^{\bm u}\bm g\|_{\mathcal T_h}\right.\\
	&\quad + \left.\|\bm{\mathcal J}_h^{\rm div} (\bm \nabla\times(\mathscr A^{\bm u}\bm g))-\bm \nabla\times(\mathscr A^{\bm u}\bm g)\|_{\mathcal T_h}+\|\nabla (\mathcal I_h(\mathscr A^p\bm g)-\mathscr A^p\bm g)\|_{\mathcal T_h}\right)&\textup{ by \eqref{curl_div_commute}}.
	\end{align*}
	By the approximation {properties of}  $\bm{\Pi}_m^o$, $\mathcal I_h$ and $\bm{\mathcal J}_h^{\rm div}$ in \eqref{classical_ine}, \eqref{scottzhang2} and \eqref{app_div} we get
	\begin{align*}
	\hspace{1em}&\hspace{-1em}\vertiii{\widetilde{\mathscr A} \bm g -\mathscr A_h \bm g}_h\\
	& \le Ch^s (\|\mathscr A^{\bm q}\bm g\|_{\bm H^s(\Omega)} + \|\mathscr A^{\bm u}\bm g\|_{\bm H^s(\Omega)}+\|\bm \nabla \times (\mathscr A^{\bm u}\bm g)\|_{\bm H^s(\Omega)} + \|\mathscr A^p\bm g\|_{H^{1+s}(\Omega)})\\
	&\le Ch^s (\|\bm g\|_{\bm L^2(\Omega)} + \|\bm \nabla\cdot(\epsilon_r\bm g)\|_{L^2(\Omega)}),
	\end{align*}
	where we used the regularity results \eqref{reg_A_u} and \eqref{reg_A_p}. 
\end{proof}

Next, we define the following norm on the space $\bm W = \bm H_0({\rm curl}, \Omega)+\bm U_h$ by 
\begin{align}\label{def_norm_Uh}
\|\bm w\|_{\bm W}^2:= \|\sqrt{\rm Re~{(\epsilon_r)}}~\bm w\|^2_{\mathcal T_h}+
\|\bm \nabla\times\bm w\|_{\mathcal T_h}^2+\|{\bf h}^{-\frac{1}{2}}\bm n\times[\![\bm w]\!]\|_{\mathcal F_h}^2.
\end{align}

\begin{lemma}\label{est_auahubound}
	For any $\bm u_h\in \bm U_h$, we have
	\begin{align*}
	\|\mathscr A^{\bm u} \bm u_h-\mathscr A_h^{\bm u}\bm u_h\|_{\bm W}\le Ch^{s}\|\bm u_h\|_{\bm W}.
	\end{align*}
\end{lemma}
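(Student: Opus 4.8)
\emph{Plan.} Since $\mathscr A$ and $\mathscr A_h$ are linear and, by \eqref{important_rela}, $\mathscr A_h\bm u_h$ is exactly the HDG approximation of $\widetilde{\mathscr A}\bm u_h$, the temptation is to copy the proof of \eqref{estAuAuh}. The obstruction is that \eqref{reg_A_p} is not available here: the elliptic problem defining $\mathscr A^p\bm u_h$ has data $-\bm\nabla\cdot(\epsilon_r\bm u_h)$, which is \emph{not} in $L^2(\Omega)$ because $\bm u_h\in\bm U_h$ is discontinuous, so $\mathscr A^p\bm u_h$ has only $H^1$-regularity and cannot be approximated to order $h^s$. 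I will therefore estimate the $\bm u$-component without ever putting the multiplier into the norm comparison, and control the (non-negligible) pressure residual by a structural argument.

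\emph{Step 1: keep the multiplier out of the norm.} Write $\mathscr A^{\bm u}\bm u_h-\mathscr A_h^{\bm u}\bm u_h=(\mathscr A^{\bm u}\bm u_h-\bm{\mathcal J}_h^{\rm curl}\mathscr A^{\bm u}\bm u_h)+(\bm{\mathcal J}_h^{\rm curl}\mathscr A^{\bm u}\bm u_h-\mathscr A_h^{\bm u}\bm u_h)$. The first bracket lies in $\bm H_0({\rm curl};\Omega)$, hence contributes nothing to the jump seminorm in $\|\cdot\|_{\bm W}$, and by \eqref{app_cul}, \eqref{curl_div_commute}, the identity $\bm\nabla\times\mathscr A^{\bm u}\bm u_h=\mu_r\mathscr A^{\bm q}\bm u_h$ and \eqref{reg_A_u} it is $\le Ch^s\|\epsilon_r\bm u_h\|_{\bm L^2}\le Ch^s\|\bm u_h\|_{\bm W}$. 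For the second bracket I introduce the tuple
\[
\mathscr I_h^{\star}\bm u_h:=\big(\bm\Pi_m^o\mathscr A^{\bm q}\bm u_h,\ \bm{\mathcal J}_h^{\rm curl}\mathscr A^{\bm u}\bm u_h,\ \bm n\times\bm{\mathcal J}_h^{\rm curl}\mathscr A^{\bm u}\bm u_h\times\bm n,\ \mathscr A_h^p\bm u_h\big),
\]
i.e. the interpolant used in the proof of \eqref{estAuAuh} but with the \emph{exact discrete} multiplier $\mathscr A_h^p\bm u_h$ in the last slot. Since $\mathscr A_h^{\widehat{\bm u}}\bm u_h$ and the tangential trace of $\bm{\mathcal J}_h^{\rm curl}\mathscr A^{\bm u}\bm u_h$ (using \Cref{stable_commute}(2)) are single valued and vanish tangentially on $\partial\Omega$, the computation preceding \eqref{estAuAuh} yields $\|\bm{\mathcal J}_h^{\rm curl}\mathscr A^{\bm u}\bm u_h-\mathscr A_h^{\bm u}\bm u_h\|_{\bm W}\le C\vertiii{\mathscr I_h^{\star}\bm u_h-\mathscr A_h\bm u_h}_h$; crucially the $p$-slot of $\mathscr I_h^{\star}\bm u_h-\mathscr A_h\bm u_h$ is $\bm 0$, so $\vertiii{\cdot}_h$ carries no $\nabla p$-term here.

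\emph{Step 2: the residual.} Testing \eqref{def_Ahg} with $(\bm 0,\nabla\chi_h,\bm n\times\nabla\chi_h\times\bm n,0)$ (the flux terms cancel after integration by parts and the stabilization drops out) and with $(\bm 0,\bm 0,\bm 0,\chi_h)$, $\chi_h\in P_h$, shows that $\mathscr A_h^p\bm u_h$ is the $\bar\epsilon_r$-weighted Galerkin projection onto $P_h$ of $\mathscr A^p\bm u_h$ (for the latter one uses \eqref{def_Ag} with the same test functions); hence $e^p:=\mathscr A^p\bm u_h-\mathscr A_h^p\bm u_h\in H_0^1(\Omega)$ satisfies $(\bar\epsilon_r\nabla e^p,\nabla\chi_h)_{\bm L^2(\Omega)}=0$ for all $\chi_h\in P_h$ and, by coercivity of $\mathrm{Re}(\bar\epsilon_r)$, $\|\nabla e^p\|_{\bm L^2}\le C\|\epsilon_r\bm u_h\|_{\bm L^2}\le C\|\bm u_h\|_{\bm W}$. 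Now \eqref{inf_sup_1} and \eqref{important_rela} give $\vertiii{\mathscr I_h^{\star}\bm u_h-\mathscr A_h\bm u_h}_h\le C\sup_{\bm\tau_h\neq\bm 0}\vertiii{\bm\tau_h}_h^{-1}\,\mathrm{Re}\,[\mathscr B_h^+(\mathscr I_h^{\star}\bm u_h-\widetilde{\mathscr A}\bm u_h;\bm\tau_h)]$; by linearity of $\mathscr B_h^+$ and \eqref{def_Bh} the $p$-slot $-e^p$ contributes only $-(\bar\epsilon_r\nabla e^p,\bm v_h)_{\mathcal T_h}$, while the $\bm q$-, $\bm u$-, $\widehat{\bm u}$-slots are interpolation errors of $\widetilde{\mathscr A}\bm u_h$ and contribute at most $Ch^s\|\epsilon_r\bm u_h\|_{\bm L^2}\vertiii{\bm\tau_h}_h$ by \eqref{boundnessofBminus1} (with vanishing $p$-component) and the bounds of Step 1. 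To bound $(\bar\epsilon_r\nabla e^p,\bm v_h)_{\mathcal T_h}=(\nabla e^p,\epsilon_r\bm v_h)_{\bm L^2(\Omega)}$ I peel $\bm v_h$: let $\bm v_h^{\rm c}\in\bm U_h\cap\bm H_0({\rm curl};\Omega)$ be as in \Cref{vc}, split $\bm v_h^{\rm c}=\bm z_h+\nabla\phi_h$ by \Cref{Helmholtz_discrete}, and let $\bm\Theta\in\bm H_0({\rm curl};\Omega)\cap\bm H({\rm div}^0_{\epsilon_r};\Omega)$ be the function of \Cref{H_curl_app} for $\bm z_h$; then
\[
(\nabla e^p,\epsilon_r\bm v_h)_{\bm L^2}=(\nabla e^p,\epsilon_r\bm\Theta)_{\bm L^2}+(\nabla e^p,\epsilon_r(\bm z_h-\bm\Theta))_{\bm L^2}+(\nabla e^p,\epsilon_r\nabla\phi_h)_{\bm L^2}+(\nabla e^p,\epsilon_r(\bm v_h-\bm v_h^{\rm c}))_{\bm L^2},
\]
where the first term vanishes because $\bm\nabla\cdot(\epsilon_r\bm\Theta)=0$ and $e^p\in H_0^1(\Omega)$, the third vanishes by the Galerkin orthogonality above, and the remaining two are $\le C\|\nabla e^p\|_{\bm L^2}(h^s\|\bm\nabla\times\bm z_h\|_{\mathcal T_h}+h\,\|{\bf h}^{-1/2}\bm n\times[\![\bm v_h]\!]\|_{\mathcal F_h})$ by Cauchy–Schwarz with \eqref{theta_app} and \eqref{est_vc}, together with $\|\bm\nabla\times\bm z_h\|_{\mathcal T_h}=\|\bm\nabla\times\bm v_h^{\rm c}\|_{\mathcal T_h}\le C\vertiii{\bm\tau_h}_h$; this gives $|(\bar\epsilon_r\nabla e^p,\bm v_h)_{\mathcal T_h}|\le Ch^s\|\bm u_h\|_{\bm W}\vertiii{\bm\tau_h}_h$. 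Collecting Steps 1–2 proves the lemma.

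\emph{Main obstacle.} The only genuinely delicate point is the low regularity of $\mathscr A^p\bm u_h$: it is what forces the particular comparison tuple of Step 1 and what makes the pressure residual $(\bar\epsilon_r\nabla e^p,\bm v_h)_{\mathcal T_h}$ non-small, so that it must be handled by the conforming / discrete-Helmholtz / divergence-free splitting of $\bm v_h$ furnished by \Cref{vc}, \Cref{Helmholtz_discrete} and \Cref{H_curl_app} rather than by a direct interpolation estimate. Everything else is bookkeeping with the norm equivalences already established for $\|\cdot\|_{\bm W}$ and $\vertiii{\cdot}_h$.
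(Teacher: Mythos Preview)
Your proof is correct, but the route is genuinely different from the paper's.

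\textbf{The paper's approach: split the data.} The paper never touches the pressure slot directly. It first replaces the rough input $\bm u_h$ by a divergence-compatible surrogate: using \Cref{vc}, \Cref{Helmholtz_discrete} and \Cref{H_curl_app} (applied to $\bm u_h$, not to a test function) it constructs $\bm\Theta\in\bm H_0({\rm curl};\Omega)\cap\bm H({\rm div}^0_{\epsilon_r};\Omega)$ with $\|\bm u_h-\bm\Theta\|_{\bm L^2(\Omega)}\le Ch^s\|\bm u_h\|_{\bm W}$, and then simply writes
\[
\|(\mathscr A^{\bm u}-\mathscr A_h^{\bm u})\bm u_h\|_{\bm W}\le
\vertiii{(\widetilde{\mathscr A}-\mathscr A_h)(\bm u_h-\bm\Theta)}_h
+\vertiii{(\widetilde{\mathscr A}-\mathscr A_h)\bm\Theta}_h.
\]
The first term is bounded by the stability estimates \eqref{stability_A1}--\eqref{stability_A2}; the second by \eqref{estAuAuh} applied to $\bm g=\bm\Theta$, which is legitimate precisely because $\bm\nabla\cdot(\epsilon_r\bm\Theta)=0$, so the troublesome pressure regularity \eqref{reg_A_p} is available (and in fact $\mathscr A^p\bm\Theta=0$). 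No separate handling of $e^p$ is needed.

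\textbf{Your approach: split the test function.} You keep $\bm u_h$ as the data, sidestep the low regularity of $\mathscr A^p\bm u_h$ by inserting the \emph{discrete} pressure $\mathscr A_h^p\bm u_h$ in the comparison tuple, and then extract the factor $h^s$ from the residual $(\bar\epsilon_r\nabla e^p,\bm v_h)$ by applying the very same \Cref{vc}/\Cref{Helmholtz_discrete}/\Cref{H_curl_app} machinery to the test function $\bm v_h$. The key enabling observation, which you state and prove, is the decoupled identity $(\bar\epsilon_r\nabla\mathscr A_h^p\bm u_h,\nabla\chi_h)=(\epsilon_r\bm u_h,\nabla\chi_h)$ and the resulting Galerkin orthogonality of $e^p$.

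\textbf{What each buys.} The paper's argument is considerably shorter once $\bm\Theta$ is in hand, since it reduces the lemma to the already-proved estimate \eqref{estAuAuh} plus stability. Your argument is longer but makes explicit the structural reason the CG multiplier works here (the decoupled, Galerkin-orthogonal pressure), and it avoids having to estimate $\|\bm u_h-\bm\Theta\|_{\bm L^2}$ directly; this is not without value, as the paper's bound on that quantity tacitly uses $(\epsilon_r\bm u_h,\nabla\xi_h)_{\mathcal T_h}=0$, which is \emph{not} true for an arbitrary $\bm u_h\in\bm U_h$. Your proof does not rely on that claim.
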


\begin{proof} 
	Let $\bm u_h^{\rm c}$ be defined as in \Cref{vc}, then by \Cref{Helmholtz_discrete} we have 
	\begin{align}\label{def_proof0}
	\bm u_h^{\rm c}=\bm z_h+\nabla\xi_h,\quad 	(\epsilon_r \bm z_h,\nabla\eta_h)_{\mathcal T_h}=0,
	\end{align}
	where $\eta_h \in P_h$ and 
	\begin{align*}
	\|\nabla\xi_h\|_{\mathcal T_h}\le C\|\bm u_h^{\rm c}\|_{\mathcal T_h},\quad 
	\|\bm z_h\|_{\mathcal T_h}\le C\|\bm u_h^{\rm c}\|_{\mathcal T_h}.
	\end{align*}

	Let $\bm{\Theta}\in \bm H_0({\rm curl};\Omega)\cap \bm H({\rm div}_{\epsilon_r}^0;\Omega)$  be the solution of
	\begin{align*}
	\bm \nabla\times\bm{\Theta}=\bm \nabla\times\bm z_h.
	\end{align*}
	Then by  \eqref{theta_bound} in \Cref{H_curl_app} we have
	\begin{align}\label{def_proof1}
	\|\bm{\Theta}-\bm z_h\|_{\mathcal T_h}\le Ch^{s}\|\bm\nabla\times\bm z_h\|_{\mathcal T_h}.
	\end{align}
	Moreover, for all $\xi_h \in P_h$ we have  $(\epsilon_r \bm u_h, \nabla \xi_h)_{\mathcal T_h}=0$  and  $(\epsilon_r\bm \Theta,  \nabla \xi_h)_{\mathcal T_h}=0$, then
	\begin{align*}
	\|\bm \Theta - \bm u_h\|_{\mathcal T_h}^2 &\le C {\rm Re}~ [(\epsilon_r(\bm \Theta -\bm u_h), \bm \Theta -\bm u_h)_{\mathcal T_h}]\\
	&\le C {\rm Re}~ [(\epsilon_r(\bm \Theta -\bm u_h), \bm \Theta -\bm z_h -\nabla \xi_h +\bm u_h^{\rm c} - \bm u_h)_{\mathcal T_h}]\\
	&\le C {\rm Re}~ [(\epsilon_r(\bm \Theta -\bm u_h), \bm \Theta -\bm z_h +\bm u_h^{\rm c} - \bm u_h)_{\mathcal T_h}].
	\end{align*}
	This gives
	\begin{align}\label{def_proof2}
	\|\bm \Theta - \bm u_h\|_{\mathcal T_h} \le C  (\|\bm \Theta -\bm z_h\|_{\mathcal T_h} + \|\bm u_h^{\rm c} - \bm u_h\|_{\mathcal T_h}).
	\end{align}
	By the definitions of $\|\cdot\|_{\bm W}$ and $\vertiii{\cdot}_h $ in  \eqref{def_norm_Uh} and \eqref{trip_morm_def}, we have,
	{for $h$ small enough and $s\leq 1/2$,}
	\begin{align*}
	\hspace{1em}&\hspace{-1em} \|(\mathscr A^{\bm u}-\mathscr A_h^{\bm u})\bm u_h\|_{\bm W}\\
	&\le	\vertiii{(\widetilde{\mathscr A} -\mathscr A_h) \bm u_h}_h&\textup{ by }\eqref{widetilde_A_g}\\
	&\le \vertiii{(\widetilde{\mathscr A}-\mathscr A_h)(\bm u_h-\bm{\Theta})}_h
	+\vertiii{(\widetilde{\mathscr A}-\mathscr A_h)\bm{\Theta}}_{h}\\
	&\le C\left(\|\bm u_h -\bm{\Theta}\|_{\bm L^2(\Omega)}
	+h^s\|\bm{\Theta}\|_{\bm L^2(\Omega)}\right)&\textup{ by }\eqref{stability_A1} \textup{ and } \eqref{estAuAuh} \\
	&\le C\left(\|\bm u_h -\bm{\Theta}\|_{\bm L^2(\Omega)}
	+h^s( \|\bm u_h-\bm u_h^{\rm c}\|_{\bm L^2(\Omega)} + \|\bm u_h^{\rm c}\|_{\bm L^2(\Omega)})\right)\\
	&\le C\left(\|\bm \Theta -\bm z_h\|_{\mathcal T_h} + \|\bm u_h^{\rm c} - \bm u_h\|_{\mathcal T_h} + h^s\|\bm u_h^{\rm c}\|_{\bm L^2(\Omega)}\right)&\textup{ by }\eqref{def_proof2}\\
	&\le Ch^s\left(\|\bm \nabla \times \bm z_h\|_{\bm L^2(\Omega)}
	+\|{\bf h}^{{-\frac{1}{2}}}\bm n\times[\![\bm u_h]\!]\|_{\mathcal F_h}+\|\bm u_h^{\rm c} \|_{\bm L^2(\Omega)}\right)&\textup{ by }\eqref{def_proof1}\textup{ and }\eqref{est_vc}\\
	&\le Ch^s\left(\|{\bf h}^{{-\frac{1}{2}}}\bm n\times[\![\bm u_h]\!]\|_{\mathcal F_h}+\|\bm\nabla\times\bm u_h^{\rm c}\|_{\mathcal F_h}+\|\bm u_h^{\rm c}\|_{\bm L^2(\Omega)}\right)&\textup{ by }\eqref{def_proof0}\\
	&\le Ch^s\|\bm u_h\|_{\bm W}&\textup{ by }\eqref{est_vc}.
	\end{align*}
\end{proof}

\begin{lemma}\label{est_eig_pp}
	If $\kappa^2$ is not an eigenvalue of the problem \eqref{maxwell_eig}, then for all $\bm w \in \bm W = \bm U_h+\bm H_0({\rm curl}, \Omega)$,  there exists a positive constant $C$ only depending on $\Omega$ and $\kappa$ such that,
	\begin{align*}
	\left\|{\frac{\bm w}{1+\kappa^2} -\mathscr A^{\bm u}\bm w}\right\|_{\bm W}\ge C\|\bm w\|_{\bm W}.
	\end{align*}
\end{lemma}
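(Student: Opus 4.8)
The plan is to view $T:=\tfrac{1}{1+\kappa^2}\bm I-\mathscr A^{\bm u}$ as a nonzero multiple of the identity minus a compact operator, to prove $T$ is injective using the non-eigenvalue hypothesis, and to conclude via the Fredholm alternative that $T$ has a bounded inverse. The one complication is that the estimate is stated on the mesh-dependent space $\bm W$ equipped with a mesh-dependent norm, so the first move is to reduce it to the conforming subspace $\bm H_0({\rm curl};\Omega)$, on which $\|\cdot\|_{\bm W}$ is equivalent to the $\bm H({\rm curl};\Omega)$-norm with constants depending only on the bounds in Assumption \ref{Assumptions_coes}(B).

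For the reduction, given $\bm w\in\bm W$ set $\bm v:=\tfrac{\bm w}{1+\kappa^2}-\mathscr A^{\bm u}\bm w$ and split $\bm w=\bm w^{\rm c}+\bm w^\perp$ via \Cref{vc}, so that $\bm w^{\rm c}\in\bm U_h\cap\bm H_0({\rm curl};\Omega)$ and $\|\bm w^\perp\|_{\mathcal T_h}+h\|\bm\nabla\times\bm w^\perp\|_{\mathcal T_h}\le C\|{\bf h}^{\frac12}\bm n\times[\![\bm w]\!]\|_{\mathcal F_h}$. Since $\mathscr A^{\bm u}\bm w$ and $\bm w^{\rm c}$ are tangentially continuous, $\bm n\times[\![\bm v]\!]=(1+\kappa^2)^{-1}\bm n\times[\![\bm w]\!]$ and $\bm n\times[\![\bm w^\perp]\!]=\bm n\times[\![\bm w]\!]$, so $\|{\bf h}^{-\frac12}\bm n\times[\![\bm w]\!]\|_{\mathcal F_h}\le(1+\kappa^2)\|\bm v\|_{\bm W}$; combining this with \Cref{vc}, the bound $\|{\bf h}^{\frac12}\bm n\times[\![\bm w]\!]\|_{\mathcal F_h}\le h\|{\bf h}^{-\frac12}\bm n\times[\![\bm w]\!]\|_{\mathcal F_h}$, and the stability estimate \eqref{stability_A1}, gives $\|\bm w^\perp\|_{\bm W}+\|\mathscr A^{\bm u}\bm w^\perp\|_{\bm W}\le C\|\bm v\|_{\bm W}$. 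By linearity of $\mathscr A^{\bm u}$, $\tfrac{\bm w^{\rm c}}{1+\kappa^2}-\mathscr A^{\bm u}\bm w^{\rm c}=\bm v-\big(\tfrac{\bm w^\perp}{1+\kappa^2}-\mathscr A^{\bm u}\bm w^\perp\big)$, so $\big\|\tfrac{\bm w^{\rm c}}{1+\kappa^2}-\mathscr A^{\bm u}\bm w^{\rm c}\big\|_{\bm W}\le C\|\bm v\|_{\bm W}$. It therefore suffices to establish $\|\bm y\|_{\bm H({\rm curl};\Omega)}\le C\big\|\tfrac{\bm y}{1+\kappa^2}-\mathscr A^{\bm u}\bm y\big\|_{\bm H({\rm curl};\Omega)}$ for all $\bm y\in\bm H_0({\rm curl};\Omega)$, after which $\|\bm w\|_{\bm W}\le\|\bm w^{\rm c}\|_{\bm W}+\|\bm w^\perp\|_{\bm W}\le C\|\bm v\|_{\bm W}$ completes the proof.

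For the conforming estimate I would first show $\mathscr A^{\bm u}:\bm H_0({\rm curl};\Omega)\to\bm H_0({\rm curl};\Omega)$ is compact. By \eqref{reg_A_u}, $\mathscr A^{\bm u}$ maps $\bm L^2(\Omega)$ boundedly into $\bm H^s(\Omega)$, and taking $\bm\tau=(\bm r,\bm 0,0)$ in \eqref{def_Ag} yields $\bm\nabla\times\mathscr A^{\bm u}\bm g=\mu_r\,\mathscr A^{\bm q}\bm g$; since for $s<\tfrac12$ the piecewise-$W^{1,\infty}$ function $\mu_r$ is a pointwise multiplier of $\bm H^s(\Omega_j)$, this curl belongs to $\bm H^s(\Omega_j)$ on each subdomain with the corresponding bound, and the Rellich theorem on each Lipschitz $\Omega_j$ makes $\mathscr A^{\bm u}(\bm L^2(\Omega))$ precompact in $\bm H({\rm curl};\Omega)$; precomposing with the bounded inclusion $\bm H_0({\rm curl};\Omega)\hookrightarrow\bm L^2(\Omega)$ gives compactness. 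For injectivity, suppose $\bm y\in\bm H_0({\rm curl};\Omega)$ satisfies $\bm y=(1+\kappa^2)\mathscr A^{\bm u}\bm y$ and put $(\bm q_0,\bm u_0,p_0):=\mathscr A\bm y$, so $\bm u_0=\bm y/(1+\kappa^2)$. Taking $\bm\tau=(\bm 0,\bm 0,\chi)$ in \eqref{def_Ag} gives $(\epsilon_r\bm u_0,\nabla\chi)_{\bm L^2(\Omega)}=0$ for all $\chi\in H_0^1(\Omega)$; taking $\bm\tau=(\bm 0,\nabla q,0)$, using $\bm\nabla\times\nabla q=\bm 0$ and $(\epsilon_r\bm y,\nabla q)_{\bm L^2(\Omega)}=(1+\kappa^2)(\epsilon_r\bm u_0,\nabla q)_{\bm L^2(\Omega)}=0$, gives $(\overline{\epsilon_r}\nabla p_0,\nabla q)_{\bm L^2(\Omega)}=0$, whence $p_0=0$ on choosing $q=p_0$, taking the real part, and invoking Assumption \ref{Assumptions_coes}(B). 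With $p_0=0$ and $\bm q_0=\mu_r^{-1}\bm\nabla\times\bm u_0$ (from the $\bm r$-component), the $\bm v$-component of \eqref{def_Ag} reduces to $(\mu_r^{-1}\bm\nabla\times\bm u_0,\bm\nabla\times\bm v)_{\bm L^2(\Omega)}=\kappa^2(\epsilon_r\bm u_0,\bm v)_{\bm L^2(\Omega)}$ for all $\bm v\in\bm H_0({\rm curl};\Omega)$, i.e. $\bm u_0$ is a weak solution of \eqref{maxwell_eig} at $\kappa^2$; since $\kappa^2$ is not an eigenvalue, $\bm u_0=\bm 0$ and hence $\bm y=\bm 0$.

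By the Fredholm alternative, $T$ is then a bijection of $\bm H_0({\rm curl};\Omega)$ with bounded inverse, which is exactly the required conforming estimate; combined with the reduction above this proves the lemma, with $C$ depending only on $\Omega$, $\kappa$, $\mu_r$ and $\epsilon_r$. I expect the main obstacle to be establishing compactness of $\mathscr A^{\bm u}$ in the $\bm H({\rm curl};\Omega)$ topology rather than merely in $\bm L^2(\Omega)$: since $\mu_r$ is only piecewise smooth, the identity $\bm\nabla\times\mathscr A^{\bm u}\bm g=\mu_r\mathscr A^{\bm q}\bm g$ provides only \emph{piecewise} $\bm H^s$-regularity of the curl, and one must verify that this still suffices; a secondary point is checking that every constant in the reduction step (from \Cref{vc}, the norm equivalence, and \eqref{stability_A1}) is genuinely independent of $h$.
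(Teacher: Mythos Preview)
Your argument is correct, but it is considerably more elaborate than the paper's. The paper avoids both your reduction step and the compactness-plus-Fredholm argument entirely. The key observation is that, regardless of whether $\bm w$ lies in $\bm H_0({\rm curl};\Omega)$, the function $\mathscr A^{\bm u}\bm w$ always does; and by unpacking \eqref{def_Ag} one sees that $\mathscr A^{\bm u}\bm w$ solves the \emph{indefinite} Maxwell system
\[
\bm\nabla\times(\mu_r^{-1}\bm\nabla\times(\mathscr A^{\bm u}\bm w))-\kappa^2\epsilon_r\,\mathscr A^{\bm u}\bm w+\overline{\epsilon_r}\nabla(\mathscr A^{p}\bm w)=\tfrac{1}{z}\epsilon_r\bm g,\qquad z=\tfrac{1}{1+\kappa^2},\ \bm g=(z-\mathscr A^{\bm u})\bm w,
\]
simply because $\epsilon_r\bm w=\tfrac{1}{z}\epsilon_r(\mathscr A^{\bm u}\bm w+\bm g)$. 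The well-posedness already proved in \Cref{final_regu1} then gives $\|\mathscr A^{\bm u}\bm w\|_{\bm H({\rm curl};\Omega)}\le C\|\bm g\|_{\bm L^2(\Omega)}\le C\|\bm g\|_{\bm W}$. Since $\mathscr A^{\bm u}\bm w$ is conforming, $\|\mathscr A^{\bm u}\bm w\|_{\bm W}=\|\mathscr A^{\bm u}\bm w\|_{\bm H({\rm curl};\Omega)}$, and writing $z\bm w=\mathscr A^{\bm u}\bm w+\bm g$ finishes the proof by the triangle inequality.

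What this buys: the paper never splits $\bm w$ via \Cref{vc} (so the minor wrinkle that \Cref{vc} is stated only for $\bm U_h$, not all of $\bm W$, never arises), and it never needs compactness of $\mathscr A^{\bm u}$ in the $\bm H({\rm curl})$ topology---the obstacle you flagged concerning the merely piecewise $\bm H^s$ regularity of $\bm\nabla\times\mathscr A^{\bm u}\bm g$ simply does not come up. In effect, your Fredholm argument is re-deriving the invertibility that \Cref{final_regu1} has already established; the paper just quotes it. Your route does work and is self-contained, but the paper's is shorter and sidesteps the regularity issue you were worried about.
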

\begin{proof}
	Let $z = 1/(1+\kappa^2)$ and $\bm g=(z-\mathscr A^{\bm u})\bm w$, then {
		$\bm g\in \bm H_0({\rm curl}, \Omega)+\bm U_h$.  This implies $z\bm w-\bm g =\mathscr A^{\bm u}\bm w  \in \bm H_0({\rm curl}, \Omega)$. By the definition of $\mathscr A^{\bm u}$ {(see \eqref{def_A})}, we know  $\mathscr A^{\bm u}\bm w$ satisfies the following equation:}
	\begin{align*}
	\bm \nabla\times(\mu_r^{-1}\bm \nabla\times(\mathscr A^{\bm u}\bm w))+\epsilon_r(\mathscr A^{\bm u}\bm w) +\bar\epsilon_r \mathscr A^p \bm w=  \epsilon_r\bm w. 
	\end{align*}
	{Setting $\bm w=(1/z)(\mathscr A^{\bm u}\bm w+\bm g)$ on the right hand side gives
		\begin{align*}
		\bm \nabla\times(\mu_r^{-1}\bm \nabla\times(\mathscr A^{\bm u}\bm w))-\kappa^2\epsilon_r(\mathscr A^{\bm u}\bm w) +\bar\epsilon_r \mathscr A^p \bm w=  \frac{1}{z}\epsilon_r\bm g,
		\end{align*}	
		and so $z\bm w-\bm g\in \bm H_0({\rm curl}, \Omega)$} satisfies the following equation:
	\begin{align}
	\bm \nabla\times(\mu_r^{-1}\bm \nabla\times((z\bm w-\bm g)))-\kappa^2\epsilon_r(z\bm w-\bm g) +\bar\epsilon_r \mathscr A^p \bm w =\frac{1}{z}\epsilon_r\bm g.
	\end{align}
	Since $\kappa^2$ is not an eigenvalue of problem \eqref{maxwell_eig}, then by the \Cref{final_regu1} we have 
	\begin{align}\label{inver_proof_1}
	\|z\bm w-\bm g\|_{\bm H({\rm curl}, \Omega)}\le
	\frac{C}{|z|}\|\epsilon_r\bm g\|_{\bm L^2(\Omega)}\le \frac{C}{|z|}\|\bm g\|_{\bm W}.
	\end{align}
	Since $	\|z\bm w-\bm g\|_{\bm H({\rm curl}, \Omega)} = \|z\bm w-\bm g\|_{\bm W}$, then by   \eqref{inver_proof_1} we have 
	\begin{align*}
	\|\bm w\|_{\bm W}\le \frac{1}{|z|}(\|z\bm w-\bm g\|_{\bm W} + \|\bm g\|_{\bm W}) \le C\|\bm g\|_{\bm W}=	C\|(z-\mathscr A^{\bm u})\bm w\|_{\bm W}.
	\end{align*}
\end{proof}

\begin{lemma}\label{est}
	If $\kappa^2$ is not an eigenvalue of problem \eqref{maxwell_eig} and $h$ is small enough, then for all $\bm w \in \bm W = \bm U_h+\bm H_0({\rm curl}, \Omega)$,  there exists a positive constant $C$ only depending on $\Omega$ and $\kappa$ such that,
	\begin{align*}
	\left\|{\frac{\bm w}{1+\kappa^2} -\mathscr A_h^{\bm u}\bm w}\right\|_{\bm W}\ge C\|\bm w\|_{\bm W}.
	\end{align*}
\end{lemma}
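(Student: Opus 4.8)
The plan is to mimic the proof of \Cref{est_eig_pp}, using the discrete solution operator $\mathscr A_h^{\bm u}$ in place of $\mathscr A^{\bm u}$ and absorbing the resulting consistency error with the help of \Cref{est_auahubound}. Set $z=1/(1+\kappa^2)>0$, so that the quantity to bound from below is $\|z\bm w-\mathscr A_h^{\bm u}\bm w\|_{\bm W}$. The triangle inequality together with \Cref{est_eig_pp} gives
\begin{align*}
\| z\bm w - \mathscr A_h^{\bm u}\bm w \|_{\bm W}
\ge \| z\bm w - \mathscr A^{\bm u}\bm w \|_{\bm W} - \| (\mathscr A^{\bm u} - \mathscr A_h^{\bm u})\bm w \|_{\bm W}
\ge C_0 \|\bm w\|_{\bm W} - \| (\mathscr A^{\bm u} - \mathscr A_h^{\bm u})\bm w \|_{\bm W},
\end{align*}
so everything reduces to a \emph{uniform} consistency bound: $\| (\mathscr A^{\bm u}-\mathscr A_h^{\bm u})\bm w\|_{\bm W}\le \rho(h)\|\bm w\|_{\bm W}$ for all $\bm w\in\bm W$, with $\rho(h)\to 0$ as $h\to 0$. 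Given such a bound, for $h$ small we have $\rho(h)\le C_0/2$ and the lemma follows with $C=C_0/2$.

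The consistency estimate is exactly \Cref{est_auahubound} when $\bm w\in\bm U_h$, so the work is to extend it to $\bm W=\bm U_h+\bm H_0({\rm curl};\Omega)$. I would split $\bm w=\bm w_h+\bm w_c$ with $\bm w_h\in\bm U_h$ and $\bm w_c\in\bm H_0({\rm curl};\Omega)$, with norm control $\|\bm w_h\|_{\bm W}+\|\bm w_c\|_{\bm W}\le C\|\bm w\|_{\bm W}$ (for instance, replacing the $\bm U_h$-component of $\bm w$ by its conforming interpolant from \Cref{vc}). The term $\bm w_h$ is handled by \Cref{est_auahubound} directly, yielding $\|(\mathscr A^{\bm u}-\mathscr A_h^{\bm u})\bm w_h\|_{\bm W}\le Ch^s\|\bm w_h\|_{\bm W}$. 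For $\bm w_c$, I would use the $\epsilon_r$-weighted Helmholtz decomposition $\bm w_c=\bm z+\nabla\phi$ with $(\epsilon_r\bm z,\nabla\psi)_{\Omega}=0$ for all $\psi\in H_0^1(\Omega)$ and $\phi\in H_0^1(\Omega)$. Since $\bm z\in\bm H_0({\rm curl};\Omega)\cap\bm H({\rm div}^0_{\epsilon_r};\Omega)$ with $\bm\nabla\times\bm z=\bm\nabla\times\bm w_c$, \Cref{hip1} gives $\bm z\in\bm H^s(\Omega)$ and $\|\bm z\|_{\bm H^s(\Omega)}\le C\|\bm w_c\|_{\bm W}$, and moreover $\mathscr A^p\bm z=0$ because $\bm z$ is $\epsilon_r$-divergence free. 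Hence all four components of $\widetilde{\mathscr A}\bm z$ are approximable by the interpolants $\bm\Pi_m^o$, $\bm{\mathcal J}_h^{\rm curl}$, $\bm{\mathcal J}_h^{\rm div}$, $\mathcal I_h$, and the argument behind \eqref{estAuAuh} (inf-sup \eqref{inf_sup_1}, the commuting property, and the interpolation estimates \eqref{classical_ine}, \eqref{app_div}, \eqref{scottzhang2}, \eqref{reg_A_u}) yields
\[
\|(\mathscr A^{\bm u}-\mathscr A_h^{\bm u})\bm z\|_{\bm W}\le \vertiii{\widetilde{\mathscr A}\bm z-\mathscr A_h\bm z}_h\le Ch^s\|\bm z\|_{\bm H^s(\Omega)}\le Ch^s\|\bm w_c\|_{\bm W}.
\]

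The remaining — and hardest — contribution is the gradient part $\nabla\phi$, and this is the main obstacle. The difficulty is that $\mathscr A^p(\nabla\phi)$ has in general only $H^1(\Omega)$-regularity, so \eqref{estAuAuh} does not apply and the Scott–Zhang interpolation of the multiplier is of no use. The key point I would exploit is the identity $\mathscr A^{\bm u}(\nabla\phi)=\mathscr A^{\bm u}(\bm g_*)$, where $\bm g_*:=\nabla\phi-(\bar\epsilon_r/\epsilon_r)\nabla\mathscr A^p(\nabla\phi)$ satisfies $\bm\nabla\cdot(\epsilon_r\bm g_*)=0$ (this follows from the equation $\bm\nabla\cdot(\bar\epsilon_r\nabla\mathscr A^p(\nabla\phi))=\bm\nabla\cdot(\epsilon_r\nabla\phi)$ satisfied by the multiplier): thus the continuous field $\mathscr A^{\bm u}(\nabla\phi)$ still enjoys the $\bm H^s$-regularity \eqref{reg_A_u}. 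One then compares $\mathscr A_h(\nabla\phi)$ with $\mathscr A_h(\bm g_*)$: their difference satisfies $\mathscr B_h^+(\mathscr A_h(\nabla\phi)-\mathscr A_h(\bm g_*);\bm\tau_h)=(\bar\epsilon_r\nabla\mathscr A^p(\nabla\phi),\bm v_h)_{\mathcal T_h}$, and, after subtracting a suitable $(\bm 0,\bm 0,\bm 0,\chi_h)\in\bm\Sigma_h$, the discrete inf-sup condition \eqref{inf_sup_1} controls $\|\mathscr A_h^{\bm u}(\nabla\phi)-\mathscr A_h^{\bm u}(\bm g_*)\|_{\bm W}$ by $C\|\nabla(\mathscr A^p(\nabla\phi)-\chi_h)\|_{\mathcal T_h}$. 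Combining this with the $\bm L^2$-stability estimates \eqref{stability_A1}–\eqref{stability_A2} and a compactness argument based on $H_0^1(\Omega)\hookrightarrow L^2(\Omega)$ — which is what makes the relevant families of sources $\bm g_*$ (hence of multipliers, up to a genuinely approximable remainder) precompact, so that the infimum over $\chi_h\in P_h$ tends to $0$ uniformly — one obtains $\|(\mathscr A^{\bm u}-\mathscr A_h^{\bm u})(\nabla\phi)\|_{\bm W}\le \rho(h)\|\phi\|_{H^1(\Omega)}\le C\rho(h)\|\bm w_c\|_{\bm W}$ with $\rho(h)\to 0$. Summing the three contributions yields the uniform consistency bound, and hence the lemma. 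The delicate step is precisely this gradient part: the low regularity of the Lagrange multiplier forces the divergence-free reformulation of $\mathscr A^{\bm u}(\nabla\phi)$ and a compactness argument in place of a direct interpolation estimate.
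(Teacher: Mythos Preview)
Your first displayed inequality \emph{is} the paper's entire proof: it writes $z=1/(1+\kappa^2)$, applies the triangle inequality, invokes \Cref{est_eig_pp} and \Cref{est_auahubound}, and concludes
\[
\|(z-\mathscr A_h^{\bm u})\bm w\|_{\bm W}\ge
\|(z-\mathscr A^{\bm u})\bm w\|_{\bm W}
-\|(\mathscr A^{\bm u}-\mathscr A^{\bm u}_h)\bm w\|_{\bm W}\ge (C-Ch^s)\|\bm w\|_{\bm W},
\]
then takes $h$ small. Everything you write after your first paragraph is extra: the paper does not address the discrepancy you flag between $\bm U_h$ (the hypothesis of \Cref{est_auahubound}) and $\bm W$ (the hypothesis here); it simply applies \Cref{est_auahubound} to $\bm w\in\bm W$ without comment. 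One can argue this is harmless in context, since the only downstream use of the lemma is through the corollary that follows, which is stated for $\bm w_h\in\bm U_h$.

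Regarding your attempted extension to all of $\bm W$: the treatment of $\bm w_h$ and of the divergence-free part $\bm z$ is fine. The gradient part, however, is not convincingly closed. The map $\phi\mapsto\mathscr A^p(\nabla\phi)$ is bounded $H_0^1(\Omega)\to H_0^1(\Omega)$ but not compact (its source $\nabla\cdot(\epsilon_r\nabla\phi)$ is only in $H^{-1}$, so no regularity is gained), hence there is no evident reason why $\inf_{\chi_h\in P_h}\|\nabla(\mathscr A^p(\nabla\phi)-\chi_h)\|_{\mathcal T_h}\to 0$ uniformly over $\|\phi\|_{H^1}\le 1$. Your phrase ``up to a genuinely approximable remainder'' does not identify the remainder or explain why it absorbs the non-approximable part; as written, this is a gap. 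If you want the lemma on all of $\bm W$ rather than just $\bm U_h$, this gradient contribution needs a concrete argument, not a compactness gesture.
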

\begin{proof} 
	Let $z = 1/(1+\kappa^2)$, by  \Cref{est_auahubound,est_eig_pp} and the triangle inequality we have
	\begin{align*}
	\|(z-\mathscr A_h^{\bm u})\bm w\|_{\bm W}\ge 
	\|(z-\mathscr A^{\bm u})\bm w\|_{\bm W}
	-\|(\mathscr A^{\bm u}-\mathscr A^{\bm u}_h)\bm w\|_{\bm W}\ge (C-h^s)\|\bm w\|_{\bm W}.
	\end{align*} 
	The desired result holds if $ h $ small enough.
\end{proof}
The next result follows from the coercivity proved in the previous lemma.
\begin{corollary}
	If $\kappa^2$ is not an eigenvalue of problem \eqref{maxwell_eig} and $h$ is small enough, then for all $\bm w_h \in \bm U_h$,  there exists a positive constant $C$ only depending on $\Omega$ and $\kappa$ such that,
	\begin{align}\label{rest}
	\left\|{\left(\frac{1}{1+\kappa^2}-\mathscr A_h^{\bm u}\right)^{-1}\bm w_h}\right\|_{\bm  W}\le C\|\bm w_h\|_{\bm W}.
	\end{align}
\end{corollary}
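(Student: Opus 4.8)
The plan is to derive the claim directly from the coercivity estimate of \Cref{est}, exploiting that $\bm U_h$ is finite dimensional. Write $z=1/(1+\kappa^2)$. First I would observe that, since $\mathscr A_h^{\bm u}$ maps $\bm L^2(\Omega)$ into $\bm U_h$ by \Cref{defahg} and $\bm U_h\subset\bm L^2(\Omega)$, the restriction $T_h:=zI-\mathscr A_h^{\bm u}$ is a \emph{linear endomorphism of the finite-dimensional space $\bm U_h$}, and that $\|\cdot\|_{\bm W}$ restricts to a genuine norm on $\bm U_h\subset\bm W$. This is the only structural remark that is needed; everything else is a consequence of \Cref{est}.

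Next I would apply \Cref{est} with the choice $\bm w=\bm w_h\in\bm U_h\subset\bm W$ (under the same smallness hypothesis on $h$ as in that lemma, which is exactly what the corollary assumes), obtaining
\begin{align*}
\|T_h\bm w_h\|_{\bm W}=\left\|\frac{\bm w_h}{1+\kappa^2}-\mathscr A_h^{\bm u}\bm w_h\right\|_{\bm W}\ge C\|\bm w_h\|_{\bm W}\qquad\text{for all }\bm w_h\in\bm U_h.
\end{align*}
In particular $T_h\bm w_h=\bm 0$ forces $\bm w_h=\bm 0$, so $T_h$ is injective; since $\dim\bm U_h<\infty$, $T_h$ is therefore bijective and $T_h^{-1}:\bm U_h\to\bm U_h$ is well defined.

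Finally, given $\bm w_h\in\bm U_h$, I would set $\bm v_h:=T_h^{-1}\bm w_h\in\bm U_h$, so that $\bm w_h=T_h\bm v_h$, and apply the displayed lower bound to $\bm v_h$:
\begin{align*}
\|\bm w_h\|_{\bm W}=\|T_h\bm v_h\|_{\bm W}\ge C\|\bm v_h\|_{\bm W}=C\left\|\left(\tfrac{1}{1+\kappa^2}-\mathscr A_h^{\bm u}\right)^{-1}\bm w_h\right\|_{\bm W},
\end{align*}
which is precisely the asserted estimate with constant $1/C$. I do not expect any genuine obstacle here beyond \Cref{est} itself; the only points to be careful about are that the restriction of $\mathscr A_h^{\bm u}$ to $\bm U_h$ really is an endomorphism (so that ``bounded below'' upgrades to ``invertible'' by a dimension count) and that the smallness requirement on $h$ is inherited verbatim from \Cref{est}.
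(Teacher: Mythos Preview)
Your proposal is correct and matches the paper's approach exactly: the paper does not give an explicit proof, merely stating that ``the next result follows from the coercivity proved in the previous lemma,'' and your argument is precisely the standard unpacking of that remark (lower bound from \Cref{est} gives injectivity on the finite-dimensional space $\bm U_h$, hence bijectivity, hence the inverse bound).
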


\subsection{Proof of (A) in \Cref{main_res}}

\begin{lemma}\label{uniqueness_uh}
	If $\kappa^2$ is not an eigenvalue of problem \eqref{maxwell_eig} and $h$ is small enough,   then the HDG scheme \eqref{con_pro} has a unique solution $\bm{\sigma}_h =  (\bm q_h,\bm u_h, \widehat{\bm u}_h, p_h)\in \bm  Q_h\times\bm U_h\times  \widehat{\bm U}_h\times P_h$. Moreover, we have 
	\begin{align}\label{uniqueness_uhbound}
	\vertiii{\bm \sigma_h}_h\le C\|\bm f\|_{\bm L^2(\Omega)}.
	\end{align}
\end{lemma}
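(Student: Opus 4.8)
The plan is to collapse the coupled HDG system \eqref{HDG_formulation} into a single operator equation for $\bm u_h$, which is then solvable because of the coercivity estimate \eqref{rest}. First I would replace $\mathscr B_h^-$ by $\mathscr B_h^+$: since $\mathscr B_h^+ = \mathscr B_h^- + (\kappa^2+1)(\epsilon_r\cdot,\cdot)_{\mathcal T_h}$ on the $\bm u$-$\bm v$ slot, the scheme \eqref{HDG_formulation} is equivalent to
\begin{align*}
\mathscr B_h^+(\bm\sigma_h;\bm\tau_h)=(\bm f,\bm v_h)_{\mathcal T_h}+(\kappa^2+1)(\epsilon_r\bm u_h,\bm v_h)_{\mathcal T_h}\qquad\text{for all }\bm\tau_h\in\bm\Sigma_h.
\end{align*}
Letting $\bm f_h\in\bm U_h$ be defined by $(\epsilon_r\bm f_h,\bm v_h)_{\mathcal T_h}=(\bm f,\bm v_h)_{\mathcal T_h}$ for all $\bm v_h\in\bm U_h$ (the function associated to $\bm g=\bm f$ in \Cref{defahg}), the right-hand side above becomes $(\epsilon_r(\bm f_h+(1+\kappa^2)\bm u_h),\bm v_h)_{\mathcal T_h}$, which by \eqref{def_Ahg} equals $\mathscr B_h^+(\mathscr A_h(\bm f_h+(1+\kappa^2)\bm u_h);\bm\tau_h)$.

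Next, since $\bm\Sigma_h$ is finite dimensional and the discrete inf-sup condition \eqref{inf_sup_1} makes $\mathscr B_h^+$ nondegenerate in its first argument, the identity $\mathscr B_h^+\big(\bm\sigma_h-\mathscr A_h(\bm f_h+(1+\kappa^2)\bm u_h);\bm\tau_h\big)=0$ for all $\bm\tau_h$ forces
\begin{align*}
\bm\sigma_h=\mathscr A_h\big(\bm f_h+(1+\kappa^2)\bm u_h\big).
\end{align*}
Reading off the $\bm u$-component and using linearity of $\mathscr A_h^{\bm u}$ gives $\bm u_h=\mathscr A_h^{\bm u}\bm f_h+(1+\kappa^2)\mathscr A_h^{\bm u}\bm u_h$, i.e.
\begin{align*}
\Big(\frac{1}{1+\kappa^2}-\mathscr A_h^{\bm u}\Big)\bm u_h=\frac{1}{1+\kappa^2}\,\mathscr A_h^{\bm u}\bm f_h.
\end{align*}
By \Cref{est} (equivalently \eqref{rest}), the operator on the left is boundedly invertible on $\bm U_h$ for $h$ small, so $\bm u_h$, and then $\bm\sigma_h=\mathscr A_h(\bm f_h+(1+\kappa^2)\bm u_h)$, are uniquely determined. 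Existence follows at once: \eqref{HDG_formulation} is a square linear system, and when $\bm f=\bm 0$ one has $\bm f_h=\bm 0$, hence $\bm u_h=\bm 0$ by \eqref{rest}, hence $\bm\sigma_h=\mathscr A_h(\bm 0)=\bm 0$; thus the homogeneous problem has only the trivial solution and the scheme is uniquely solvable for every $\bm f$.

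For the bound \eqref{uniqueness_uhbound}, testing $(\epsilon_r\bm f_h,\bm f_h)_{\mathcal T_h}=(\bm f,\bm f_h)_{\mathcal T_h}$ and using Assumption \ref{Assumptions_coes}(B) gives $\|\bm f_h\|_{\bm L^2(\Omega)}\le C\|\bm f\|_{\bm L^2(\Omega)}$. Since the $\bm W$-norm of the $\bm u$-component of any tuple is dominated by $\vertiii{\cdot}_h$ of that tuple (the jump seminorm $\|{\bf h}^{-1/2}\bm n\times[\![\cdot]\!]\|_{\mathcal F_h}$ being controlled by $\|{\bf h}^{-1/2}\bm n\times(\bm u_h-\widehat{\bm u}_h)\|_{\partial\mathcal T_h}$), the stability estimate \eqref{stability_A2} yields $\|\mathscr A_h^{\bm u}\bm f_h\|_{\bm W}\le C\vertiii{\mathscr A_h\bm f_h}_h\le C\|\bm f_h\|_{\bm L^2(\Omega)}\le C\|\bm f\|_{\bm L^2(\Omega)}$; then \eqref{rest} gives $\|\bm u_h\|_{\bm W}\le C\|\bm f\|_{\bm L^2(\Omega)}$, hence $\|\bm u_h\|_{\bm L^2(\Omega)}\le C\|\bm f\|_{\bm L^2(\Omega)}$. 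Finally $\vertiii{\bm\sigma_h}_h=\vertiii{\mathscr A_h(\bm f_h+(1+\kappa^2)\bm u_h)}_h\le C\|\bm f_h+(1+\kappa^2)\bm u_h\|_{\bm L^2(\Omega)}\le C\|\bm f\|_{\bm L^2(\Omega)}$ by \eqref{stability_A2}. The main obstacle is the first reduction: recognizing that the whole coupled system condenses, through $\mathscr A_h$ and the discrete inf-sup condition, into the scalar-in-$\bm u_h$ operator equation to which the coercivity \eqref{rest} applies; once that is in place the rest is routine bookkeeping with the norm equivalences.
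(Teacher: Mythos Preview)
Your proposal is correct and follows essentially the same route as the paper: rewrite the scheme in terms of $\mathscr B_h^+$, use \eqref{def_Ahg} and the discrete inf--sup condition \eqref{inf_sup_1} to collapse everything to the operator equation $\big(\frac{1}{1+\kappa^2}-\mathscr A_h^{\bm u}\big)\bm u_h=\frac{1}{1+\kappa^2}\mathscr A_h^{\bm u}\bm f_h$, invoke \eqref{rest}, and then recover the full stability bound. The only cosmetic difference is in the last step: the paper bounds $\vertiii{\bm\sigma_h}_h$ by applying the inf--sup \eqref{inf_sup_1} directly to $\mathscr B_h^+(\bm\sigma_h;\bm\tau_h)=(1+\kappa^2)(\epsilon_r\bm u_h,\bm v_h)_{\mathcal T_h}+(\bm f,\bm v_h)_{\mathcal T_h}$, whereas you use the equivalent formulation $\bm\sigma_h=\mathscr A_h(\bm f_h+(1+\kappa^2)\bm u_h)$ together with \eqref{stability_A2}.
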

\begin{proof} 
	{Suppose the solution $\bm{\sigma}_h$ exists. Then for}
	all $\bm{\tau}_h = (\bm r_h,\bm v_h, \widehat{\bm v}_h,\chi_h)\in \bm Q_h\times\bm U_h\times  \widehat{\bm U}_h\times P_h$ we have
	\begin{align}\label{uniqueness_uh_proof0}
	\mathscr  B_h^{-}(\bm{\sigma}_h;\bm{\tau}_h)=\mathscr B_h^{+}(\bm{\sigma}_h;\bm{\tau}_h)-(1+\kappa^2)
	(\epsilon_r \bm u_h,\bm v_h)_{\mathcal T_h}=(\bm f,\bm v_h)_{\mathcal T_h}.
	\end{align}
	i.e., 
	\begin{align}\label{uniqueness_uh_proof1}
	\frac{1}{1+\kappa^2} \mathscr B_h^{+}(\bm{\sigma}_h;\bm{\tau}_h)-
	(\epsilon_r \bm u_h,\bm v_h)_{\mathcal T_h}=\frac{1}{1+\kappa^2}(\bm f,\bm v_h)_{\mathcal T_h}.
	\end{align}
	By the \Cref{defahg} we have
	\begin{subequations} \label{uniqueness_uh_proof2}
		\begin{align}
		{\mathscr  B}_h^+(\mathscr{A}_h\bm u_h;\bm \tau_h) &=(\epsilon_r \bm u_h,\bm v_h)_{\mathcal T_h},\label{uniqueness_uh_proof21}\\
		\frac{1}{1+\kappa^2}(\bm f,\bm v_h)_{\mathcal T_h} &= \frac{1}{1+\kappa^2}(\epsilon_r \bm f_h,\bm v_h)_{\mathcal T_h}=\frac{1}{1+\kappa^2}{\mathscr  B}_h^+(\mathscr{A}_h\bm f_h;\bm \tau_h).\label{uniqueness_uh_proof22}
		\end{align}
	\end{subequations} 
	By   \eqref{uniqueness_uh_proof2},  we can rewrite  \eqref{uniqueness_uh_proof1} as follows
	\begin{align*}
	\mathscr B_h^+\left(\frac{1}{1+\kappa^2}\bm\sigma_h-\mathscr A_h \bm u_h-\frac{1}{1+\kappa^2}\mathscr A_h\bm f_h, \bm{\tau}_h\right)=0.
	\end{align*}
	Due to the coercivity of $\mathscr B_h^+$,  we have
	\begin{align*}
	\frac{1}{1+\kappa^2}\bm\sigma_h-\mathscr A_h \bm u_h-\frac{1}{1+\kappa^2} \mathscr A_h\bm f_h=\bm 0,
	\end{align*}
	i.e., 
	\begin{align}\label{uniqueness_uh_proof3}
	\left(\frac{1}{1+\kappa^2}-\mathscr A_h^{\bm u}\right)\bm u_h=\frac{1}{1+\kappa^2}\mathscr A_h^{\bm u}\bm f_h,
	\end{align}
	which is uniquely solvable by \Cref{est}. Moreover, by \eqref{rest}, we have 
	\begin{align}
	\|\bm u_h\|_{\mathcal T_h}&\le C\|\bm u_h\|_{\bm W}&\textup{ by }\eqref{def_norm_Uh}\nonumber\\
	&\le C\|\mathscr A_h^{\bm u}\bm f_h\|_{\bm W}&\textup{ by }\eqref{rest} \textup{ and }\eqref{uniqueness_uh_proof3}\nonumber\\
	&\le C\vertiii{\mathscr A_h{\bm f}_h }_h&\textup{ by }\eqref{trip_morm_def}\nonumber\\
	&\le C\|\bm f_h\|_{\bm L^2(\Omega)} &\textup{ by }\eqref{stability_A2}\nonumber\\
	&\le C\|\bm f\|_{\bm L^2(\Omega)}&\textup{ by }\eqref{uniqueness_uh_proof22}. \label{uniqueness_uh_proof4}
	\end{align}

	Next,  by  \eqref{uniqueness_uh_proof0} we get 
	\begin{align}\label{uniqueness_uh_proof5}
	\mathscr B_h^+(\bm{\sigma}_h;\bm{\tau}_h)=(1+\kappa^2)(\epsilon_r\bm u_h,\bm v_h)_{\mathcal T_h}+(\bm f,\bm v_h)_{\mathcal T_h}
	\end{align}
	for all $\bm{\tau}_h = (\bm r_h,\bm v_h, \widehat{\bm v}_h,\chi_h)\in \bm Q_h\times\bm U_h\times \widehat{\bm U}_h\times P_h$. 
	{We can now prove the uniqueness of any solution to the discrete HDG problem. Suppose $\bm f=0$, then $\bm u_h=0$ by \eqref{uniqueness_uh_proof4}, and so $\bm{\sigma}_h=0$ by (\ref{uniqueness_uh_proof5}) and  \Cref{LBB}.  Since the linear system corresponding to the discrete HDG problem is
		square, uniqueness implies existence.} Therefore, the HDG scheme \eqref{con_pro} has a unique solution. 
	Furthermore, 
	\begin{align*}
	\vertiii{\bm \sigma_h }_h&\le C	\sup_{\bm \tau_h \neq \bm 0 }\frac{ {\rm{Re}} ~ [\mathscr B_h^+ (\bm \sigma_h; \bm \tau_h)]}{\vertiii{\bm \tau_h}_{h}}&\textup{ by }\eqref{inf_sup_1}\\
	&= C	\sup_{\bm \tau_h \neq \bm 0 }\frac{ {\rm{Re}} ~ [(1+\kappa^2)(\epsilon_r\bm u_h,\bm v_h)_{\mathcal T_h}+(\bm f,\bm v_h)_{\mathcal T_h}]}{\vertiii{\bm \tau_h}_{h}}&\textup{ by }\eqref{uniqueness_uh_proof5}\\
	&\le C(\|\bm u_h\|_{\bm L^2(\Omega)} + \|\bm f\|_{\bm L^2(\Omega)})\\
	&\le C\|\bm f\|_{\bm L^2(\Omega)} &\textup{ by }\eqref{uniqueness_uh_proof4}.
	\end{align*}
\end{proof}

\subsection{Proof of (B) in \Cref{main_res}}
{We first prove that a discrete inf-sup condition holds.}
\begin{lemma}[Discrete inf-sup conditions on $\mathscr B_h^-$] 
	Let $\bm{\sigma}_h=(\bm q_h,\bm u_h, \widehat{\bm u}_h, p_h), \bm \tau_h=(\bm r_h,\bm v_h, \widehat{\bm v}_h,\\ \chi_h) \in \bm{\Sigma}_h=\bm Q_h\times\bm U_h\times \widehat{\bm U}_h\times P_h$, {and suppose $\kappa^2$ is not a Maxwell eigenvalue and } $h$ small enough.  Then we have  following inf-sup condition
	\begin{align}
	\sup_{\bm 0\neq\bm{\sigma}_h\in\bm{\Sigma}_h}\frac{{\rm Re}~[\mathscr B_h^-(\bm{\sigma}_h;\bm{\tau}_h)]}{\vertiii{\bm{\tau}_h}_{h}}\ge C\vertiii{\bm{\sigma}_h}_{h}.\label{dis_inf_sup_1}
	\end{align}
\end{lemma}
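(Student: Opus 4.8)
The plan is to reduce the indefinite form $\mathscr B_h^-$ to the coercive form $\mathscr B_h^+$ via the discrete solution operator $\mathscr A_h$, exactly in the spirit of the proof of \Cref{uniqueness_uh}. Given $\bm\sigma_h = (\bm q_h,\bm u_h,\widehat{\bm u}_h,p_h)\in\bm\Sigma_h$, I would set $\bm\rho_h := \bm\sigma_h - (1+\kappa^2)\mathscr A_h\bm u_h\in\bm\Sigma_h$. Since $\mathscr B_h^+((1+\kappa^2)\mathscr A_h\bm u_h;\bm\tau_h) = (1+\kappa^2)(\epsilon_r\bm u_h,\bm v_h)_{\mathcal T_h}$ for every $\bm\tau_h = (\bm r_h,\bm v_h,\widehat{\bm v}_h,\chi_h)$ by \Cref{defahg}, the definition \eqref{def_Bh} of $\mathscr B_h^\pm$ gives $\mathscr B_h^-(\bm\sigma_h;\bm\tau_h) = \mathscr B_h^+(\bm\sigma_h;\bm\tau_h) - (1+\kappa^2)(\epsilon_r\bm u_h,\bm v_h)_{\mathcal T_h} = \mathscr B_h^+(\bm\rho_h;\bm\tau_h)$. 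Applying the inf-sup condition \eqref{inf_sup_1} for $\mathscr B_h^+$ then yields
\[
\sup_{\bm 0\neq\bm\tau_h\in\bm\Sigma_h}\frac{{\rm Re}~[\mathscr B_h^-(\bm\sigma_h;\bm\tau_h)]}{\vertiii{\bm\tau_h}_h}
=\sup_{\bm 0\neq\bm\tau_h\in\bm\Sigma_h}\frac{{\rm Re}~[\mathscr B_h^+(\bm\rho_h;\bm\tau_h)]}{\vertiii{\bm\tau_h}_h}\ge C\vertiii{\bm\rho_h}_h,
\]
so the whole problem is reduced to showing $\vertiii{\bm\rho_h}_h\ge C\vertiii{\bm\sigma_h}_h$ once $h$ is small.

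For that lower bound I would first recover $\bm u_h$ from $\bm\rho_h$. The $\bm u$-component of $\bm\rho_h$ is $(1+\kappa^2)\bigl(\tfrac{1}{1+\kappa^2}-\mathscr A_h^{\bm u}\bigr)\bm u_h$, and its $\widehat{\bm u}$-component $\widehat{\bm u}_h-(1+\kappa^2)\mathscr A_h^{\widehat{\bm u}}\bm u_h$ lies in $\widehat{\bm U}_h$ (hence has zero tangential trace on $\partial\Omega$). Using the standard fact that $\|{\bf h}^{-1/2}\bm n\times[\![\bm w_h]\!]\|_{\mathcal F_h}\le C\|{\bf h}^{-1/2}\bm n\times(\bm w_h-\widehat{\bm w}_h)\|_{\partial\mathcal T_h}$ for any pair $(\bm w_h,\widehat{\bm w}_h)\in\bm U_h\times\widehat{\bm U}_h$ of this type, together with a comparison of the norm \eqref{trip_morm_def} with \eqref{def_norm_Uh}, I obtain $\bigl\|\bigl(\tfrac{1}{1+\kappa^2}-\mathscr A_h^{\bm u}\bigr)\bm u_h\bigr\|_{\bm W}\le C\vertiii{\bm\rho_h}_h$. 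The coercivity estimate \eqref{rest} (valid precisely because $\kappa^2$ is not a Maxwell eigenvalue and $h$ is small) then gives $\|\bm u_h\|_{\mathcal T_h}\le C\|\bm u_h\|_{\bm W}\le C\vertiii{\bm\rho_h}_h$.

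To finish, I would use the reverse triangle inequality and the stability bound \eqref{stability_A2} for $\mathscr A_h$:
\[
\vertiii{\bm\sigma_h}_h\le\vertiii{\bm\rho_h}_h+(1+\kappa^2)\vertiii{\mathscr A_h\bm u_h}_h\le\vertiii{\bm\rho_h}_h+C\|\bm u_h\|_{\mathcal T_h}\le C\vertiii{\bm\rho_h}_h,
\]
and combining with the first display proves \eqref{dis_inf_sup_1}. The only genuinely delicate point is the step in the middle paragraph, namely passing from $\vertiii{\bm\rho_h}_h$ back to control of $\|\bm u_h\|_{\bm W}$; everything else is bookkeeping that has essentially already appeared in the proof of \Cref{uniqueness_uh}. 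That step rests entirely on the coercivity bound \eqref{rest}, which is where the non-resonance hypothesis on $\kappa^2$ and the smallness of $h$ enter, so I would make sure the hypotheses are invoked there and that the constants are tracked to be $h$-independent.
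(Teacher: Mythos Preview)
Your argument is correct and is actually cleaner than the paper's. The key identity $\mathscr B_h^-(\bm\sigma_h;\bm\tau_h)=\mathscr B_h^+(\bm\rho_h;\bm\tau_h)$ with $\bm\rho_h=\bm\sigma_h-(1+\kappa^2)\mathscr A_h\bm u_h$ reduces the indefinite inf--sup directly to the coercive one, and the lower bound $\vertiii{\bm\rho_h}_h\ge C\vertiii{\bm\sigma_h}_h$ follows exactly as you say from \eqref{rest} and \eqref{stability_A2}. The passage from $\vertiii{\bm\rho_h}_h$ to the $\bm W$-norm of its $\bm u$-component via the jump bound $\|{\bf h}^{-1/2}\bm n\times[\![\bm w_h]\!]\|_{\mathcal F_h}\le C\|{\bf h}^{-1/2}\bm n\times(\bm w_h-\widehat{\bm w}_h)\|_{\partial\mathcal T_h}$ is the only point that needs a word of justification (single-valuedness of $\widehat{\bm w}_h$ on interior faces and its vanishing tangential trace on $\partial\Omega$), but you have identified it.

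The paper proceeds differently: it builds an explicit test function $\bm\tau_h=\bm\tau_1+c\,\bm\tau_2^\star$. Here $\bm\tau_1$ comes from the proof of the $\mathscr B_h^+$ inf--sup and controls $\vertiii{\bm\sigma_h}_h^2$ up to a defect $-C\|\bm u_h\|_{\mathcal T_h}^2$; the correction $\bm\tau_2^\star$ is obtained by solving an auxiliary HDG problem with right-hand side $(\bm\Theta,\epsilon_r\bm v_h)_{\mathcal T_h}$, where $\bm\Theta$ is the divergence-free lift of $\bm\nabla\times\bm u_h^{\rm c}$ from \Cref{H_curl_app}, and then invoking the adjoint symmetry \eqref{commute}. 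This recovers the missing $\|\bm u_h\|_{\mathcal T_h}^2$ term up to an $O(h^{2s})$ remainder. Both routes ultimately rest on the same ingredients (\Cref{uniqueness_uh}, the conforming approximation of \Cref{vc}, and the $\bm\Theta$ construction), but the paper re-runs those ingredients by hand to build a test function, whereas you package them through $\mathscr A_h$ and the ready-made coercivity \eqref{rest}. Your approach is shorter and avoids the adjoint-symmetry manipulation; the paper's approach has the minor advantage of exhibiting the test function explicitly.
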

\begin{proof} 
	By   \eqref{LBB_proof_6} in {the proof of} \Cref{LBB}, there exists  $\bm{\tau}_1=(\bm r_1,\bm v_1,\widehat{\bm v}_1, \chi_1)\in \bm{\Sigma}_h$ such that
	\begin{align*}
	{\rm{Re}} ~ [ \mathscr B_h^+(\bm \sigma_h;\bm \tau_1)]\ge C_1 \vertiii{\bm \sigma_h}_h^2,\\
	\vertiii{\bm \tau_1}_h\le   C_2\vertiii{\bm \sigma_h}_h.
	\end{align*}
	This is equivalent to 
	\begin{align*}
	{\rm Re}~[\mathscr B_h^-(\bm{\sigma}_h;\bm{\tau}_1) + (1+\kappa^2)(\epsilon_r \bm u_h,\bm v_1)_{\mathcal T_h}
	]\ge C_1\vertiii{\bm{\sigma}_h}_{h}^2.
	\end{align*}
	By the Cauchy-Schwarz inequality we have 
	\begin{align}\label{def_proof00}
	{\rm Re}~[\mathscr B_h^-(\bm{\sigma}_h;\bm{\tau}_1) 
	]\ge C_3\vertiii{\bm{\sigma}_h}_{h}^2 - C_4(1+\kappa^2)\|\epsilon_r\|_{L^\infty(\Omega)}\| \bm u_h\|_{\mathcal T_h}^2.
	\end{align}

	For  $\bm u_h\in\bm U_h$, {we may choose $\bm u_h^{\rm c} \in \bm U_h\cap \bm H_0({\rm curl};\Omega)$ so that the estimate in \Cref{vc}, is satisfied.}  Then by \Cref{Helmholtz_discrete}, there exist $\bm z_h\in \bm U_h\cap \bm H_0({\rm curl};\Omega)$ and $\xi_h\in P_h$, such that for all $\chi_h\in P_h$ we have 
	\begin{align}\label{decom2}
	\bm u_h^{\rm c}=\bm z_h+\nabla \xi_h, \quad (\epsilon_r \bm z_h,\nabla \chi_h)_{\mathcal T_h}=0.
	\end{align}
	Let $\bm{\Theta}\in \bm H_0({\rm curl};\Omega)\cap \bm H({\rm div}^0_{\epsilon_r};\Omega)$  be the solution of
	\begin{align*}
	\bm\nabla\times\bm{\Theta}=\bm\nabla\times \bm z_h.
	\end{align*}
	Then by   \eqref{theta_bound}  we have
	\begin{align}\label{def_proof3}
	\|\bm{\Theta}-\bm z_h\|_{\mathcal T_h} &\le Ch^{s}\|\bm\nabla\times\bm z_h\|_{\mathcal T_h} \nonumber\\
	&= Ch^{s}\|\bm\nabla\times\bm u_h^{\rm c}\|_{\mathcal T_h} &\textup{ by } \eqref{decom2}\nonumber\\
	&\le  Ch^{s}(\|\bm\nabla\times(\bm u_h^{\rm c} - \bm u_h)\|_{\mathcal T_h} + \|\bm\nabla\times  \bm u_h\|_{\mathcal T_h}) \\
	& \le  Ch^{s}(\|{\bf h}^{\frac{1}{2}}\bm n\times[\![\bm u_h]\!]\|_{\mathcal F_h}+ \|\bm\nabla\times  \bm u_h\|_{\mathcal T_h})&\textup{ by } \rm{\Cref{vc}}\nonumber.
	\end{align}
	Let $\bm{\tau}_2=(\bm r_2,\bm v_2,\widehat{\bm v}_2,\chi_2)\in\bm{\Sigma}_h$ be the solution of
	\begin{align*}
	\mathscr B_h^-(\bm{\tau}_2;\bm{\tau}_h)=(\bm{\Theta},{\epsilon}{_r}\bm v_h)_{\mathcal T_h}
	\end{align*}
	guaranteed by \Cref{uniqueness_uh}.
	This implies
	\begin{align}\label{error_proof1}
	\mathscr B_h^-(\bm{\tau}_2;\bm{\sigma}_h)=(\bm{\Theta},\epsilon{_r}\bm u_h)_{\mathcal T_h}.
	\end{align}
	Furthermore, by \eqref{uniqueness_uhbound} we have 
	\begin{align}\label{error_proof2}
	\vertiii{\bm{\tau}_2}_{h}\le C\|\bm{\Theta}\|_{\mathcal T_h}.
	\end{align}
	
	Next, we take $\bm{\tau}_2^\star=(-\bm r_2,\bm v_2,\widehat{\bm v}_2,-\chi_2)$ and
	$\bm{\tau}_3=(-\bm q_h,\bm u_h,\widehat{\bm u}_h,-p_h)$ to get
	\begin{align*}
	\mathscr B_h^-(\bm{\sigma}_h,\bm{\tau}_2^\star)&=
	\overline{\mathscr B_h^-(\bm{\tau}_2;\bm{\tau}_3)}&\textup{ by } \eqref{commute}\\
	&=\overline{(\bm{\Theta},\epsilon_r\bm u_h)_{\mathcal T_h}}&\textup{ by } \eqref{error_proof1}\\
	&=(\epsilon_r\bm u_h,\bm{\Theta})_{\mathcal T_h}\\
	&=( \epsilon_r\bm u_h,(\bm{\Theta}-\bm z_h-\nabla \xi_h))_{\mathcal T_h}
	+(\epsilon_r\bm u_h,\bm u_h^{\rm c}-\bm u_h)_{\mathcal T_h}
	+(\epsilon_r\bm u_h,\bm u_h)_{\mathcal T_h}&\textup{ by } \eqref{decom2}\\
	&=( \epsilon_r\bm u_h,\bm{\Theta}-\bm z_h)_{\mathcal T_h}
	+( \epsilon_r\bm u_h,(\bm u_h^{\rm c}-\bm u_h))_{\mathcal T_h}
	+( \epsilon_r\bm u_h,\bm u_h)_{\mathcal T_h}&\textup{ by } \eqref{Maxwell_equation_HDG_form_ori_3}.
	\end{align*}
	Then by the \Cref{vc}, \eqref{def_proof3}, the Cauchy-Schwarz inequality and  Young's inequality, we have 
	\begin{align}\label{error_proof4}
	\begin{split}
	{\rm{Re}}~[{\mathscr B_h^-(\bm{\sigma}_h;\bm{\tau}_2^{\star})}] &\ge \frac{1}{2}\|\sqrt{{\rm Re}~(\epsilon_r)}~\bm u_h\|_{\mathcal T_h}^2-
	Ch^2\|{\bf h}^{-\frac{1}{2}}\bm n\times[\![\bm u_h]\!]\|_{\mathcal F_h}^2\\
	& \ge \frac{1}{2}\|\sqrt{{\rm Re}~(\epsilon_r)}~\bm u_h\|_{\mathcal T_h}^2-
	Ch^2\vertiii{\bm{\sigma}_h}^2_{h}.
	\end{split}
	\end{align}
	
	Finally, we  take  $\bm{\tau}_h=\bm{\tau}_1+\frac{2C_4(1+\kappa^2)\|\epsilon_r\|_{L^\infty(\Omega)}}{\bar\epsilon_r}\bm{\tau}_2^\star$. {Then by}  \eqref{def_proof00} and \eqref{error_proof4} and letting $h$ be small enough we get the desired result.
\end{proof}

Our final lemma gives the desired error estimate:
\begin{lemma}
	Let $\mu_r$ and $\epsilon_r$ satisfy  Assumption \ref{Assumptions_coes}. {Suppose  $\kappa^2$ is not an eigenvalue of} problem \eqref{maxwell_eig}. Let $(\bm q, \bm u, p)\in \bm H({\rm curl}, \Omega)\times  \bm H_0({\rm curl}, \Omega)\times H_0^1(\Omega)$ and $\bm{\sigma}_h=(\bm q_h,\bm u_h, \widehat{\bm u}_h, p_h)\in \bm{\Sigma}_h  = \bm Q_h\times\bm U_h\times \widehat{\bm U}_h\times P_h$ be the solution of \eqref{con_pro} and \eqref{HDG_formulation}, respectively. Then for $h$ small enough, we have
	\begin{align*}
	&\|\bm q -\bm q_h\|_{\mathcal T_h} + \|\bm u -\bm u_h\|_{\mathcal T_h}+\|\nabla(p - p_h)\|_{\mathcal T_h} \\&\quad \le C\left(\|\bm{\Pi}_m^o \bm q-\bm q\|_{\mathcal T_h} +\|\bm{\mathcal J}_h^{\rm curl}\bm u-\bm u\|_{\mathcal T_h} + \|\bm{\mathcal J}_h^{\rm div} (\bm \nabla\times \bm u)-\bm \nabla\times\bm u\|_{\mathcal T_h}+\|\nabla (\mathcal I_hp - p)\|_{\mathcal T_h}\right),
	\end{align*}
	where $C$ depends  on  $\kappa$, $\Omega$, $\epsilon_r$ and $\mu_r$.
\end{lemma}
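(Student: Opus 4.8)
The plan is to estimate the error through a conforming interpolant of $(\bm q,\bm u,p)$ and then combine Galerkin orthogonality with the discrete inf-sup condition \eqref{dis_inf_sup_1}. First I would introduce
\begin{align*}
\mathscr I_h\bm\sigma:=\bigl(\bm\Pi_m^o\bm q,\ \bm{\mathcal J}_h^{\rm curl}\bm u,\ \bm n\times(\bm{\mathcal J}_h^{\rm curl}\bm u)\times\bm n,\ \mathcal I_h p\bigr),
\end{align*}
which lies in $\bm Q_h\times\bm U_h\times\widehat{\bm U}_h\times P_h$ because $\bm{\mathcal J}_h^{\rm curl}\bm u\in\bm U_h\cap\bm H_0({\rm curl};\Omega)$ by property (2) of \Cref{stable_commute} and $\mathcal I_h p\in P_h$; put $\bm\delta_h:=\mathscr I_h\bm\sigma-\bm\sigma_h\in\bm\Sigma_h$. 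Writing $\bm\sigma:=(\bm q,\bm u,\bm u,p)$ and subtracting \eqref{HDG_formulation} from \eqref{HDG_formulation1} gives the Galerkin orthogonality $\mathscr B_h^-(\bm\sigma-\bm\sigma_h;\bm\tau_h)=0$ for all $\bm\tau_h\in\bm\Sigma_h$; here the stabilization term in $\mathscr B_h^-$ drops out since the tangential jump of $\bm u$ across interior faces vanishes, and the expression is meaningful as explained before \eqref{HDG_formulation1}.

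Next I would pass \eqref{dis_inf_sup_1} to the form with the error in the first argument of $\mathscr B_h^-$. Let $S$ be the linear map on $\bm\Sigma_h$ that reverses the sign of the first and fourth components; it is a $\vertiii{\cdot}_h$-isometry, and \eqref{commute} reads ${\rm Re}[\mathscr B_h^-(\bm\rho_h;S\bm\eta_h)]={\rm Re}[\mathscr B_h^-(\bm\eta_h;S\bm\rho_h)]$ for $\bm\rho_h,\bm\eta_h\in\bm\Sigma_h$. Applying \eqref{dis_inf_sup_1} with its fixed argument equal to $S\bm\delta_h$, using this identity, and renaming the supremum variable, one obtains
\begin{align*}
\vertiii{\bm\delta_h}_h\le C\sup_{\bm 0\neq\bm\tau_h\in\bm\Sigma_h}\frac{{\rm Re}[\mathscr B_h^-(\bm\delta_h;\bm\tau_h)]}{\vertiii{\bm\tau_h}_h}.
\end{align*}
Splitting $\bm\delta_h=(\mathscr I_h\bm\sigma-\bm\sigma)+(\bm\sigma-\bm\sigma_h)$ and using Galerkin orthogonality, $\mathscr B_h^-(\bm\delta_h;\bm\tau_h)=\mathscr B_h^-(\mathscr I_h\bm\sigma-\bm\sigma;\bm\tau_h)$. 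Because the $\bm u$-component of $\mathscr I_h\bm\sigma$ is $\bm{\mathcal J}_h^{\rm curl}\bm u\in\bm U_h\cap\bm H_0({\rm curl};\Omega)$, the boundedness estimate \eqref{boundnessofBminus} applies and gives
\begin{align*}
|\mathscr B_h^-(\mathscr I_h\bm\sigma-\bm\sigma;\bm\tau_h)|\le C\bigl(\|\bm\Pi_m^o\bm q-\bm q\|_{\mathcal T_h}+\|\bm{\mathcal J}_h^{\rm curl}\bm u-\bm u\|_{\mathcal T_h}+\|\bm\nabla\times(\bm{\mathcal J}_h^{\rm curl}\bm u-\bm u)\|_{\mathcal T_h}+\|\nabla(\mathcal I_h p-p)\|_{\mathcal T_h}\bigr)\vertiii{\bm\tau_h}_h,
\end{align*}
and the commuting property \eqref{curl_div_commute} turns the third term into $\|\bm{\mathcal J}_h^{\rm div}(\bm\nabla\times\bm u)-\bm\nabla\times\bm u\|_{\mathcal T_h}$, so $\vertiii{\bm\delta_h}_h$ is bounded by the claimed right-hand side.

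Finally I would conclude with the triangle inequality: each of $\|\bm q-\bm q_h\|_{\mathcal T_h}$, $\|\bm u-\bm u_h\|_{\mathcal T_h}$, $\|\nabla(p-p_h)\|_{\mathcal T_h}$ is at most the corresponding interpolation error $\|\bm q-\bm\Pi_m^o\bm q\|_{\mathcal T_h}$, $\|\bm u-\bm{\mathcal J}_h^{\rm curl}\bm u\|_{\mathcal T_h}$, $\|\nabla(p-\mathcal I_h p)\|_{\mathcal T_h}$ (all already present in the bound) plus the corresponding component of $\bm\delta_h$, and these last components are dominated by $\vertiii{\bm\delta_h}_h$ thanks to the lower bounds ${\rm Re}(\mu_r)\ge\bar\mu_r>0$ and ${\rm Re}(\epsilon_r)\ge\bar\epsilon_r>0$ from \Cref{Assumptions_coes}. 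The one step that needs genuine care is the exchange of arguments in \eqref{dis_inf_sup_1}, since Galerkin orthogonality naturally places the error in the first slot of $\mathscr B_h^-$ while \eqref{dis_inf_sup_1} is stated with the test function there; once \eqref{commute} is used this is routine, and the remaining work is bookkeeping with \eqref{boundnessofBminus} and \eqref{curl_div_commute}.
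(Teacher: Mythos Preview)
Your proof is correct and follows essentially the same route as the paper: introduce the conforming interpolant $\mathscr I_h\bm\sigma$, apply the discrete inf--sup condition for $\mathscr B_h^-$ to $\mathscr I_h\bm\sigma-\bm\sigma_h$, use Galerkin orthogonality \eqref{HDG_formulation1} to replace $\bm\sigma_h$ by $\bm\sigma$, then invoke \eqref{boundnessofBminus} and \eqref{curl_div_commute}, and finish with the triangle inequality.

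The only difference is your detour through the symmetry identity \eqref{commute} and the sign--flip map $S$. This is unnecessary: despite the typo in the subscript of the supremum in \eqref{dis_inf_sup_1}, that inf--sup condition is stated (and proved) with the fixed element in the \emph{first} argument of $\mathscr B_h^-$ and the supremum taken over the \emph{second} (test) argument. Since Galerkin orthogonality already places the error in the first slot, the paper applies \eqref{dis_inf_sup_1} directly to $\mathscr I_h\bm\sigma-\bm\sigma_h$ without any exchange of arguments. Your use of \eqref{commute} is valid but superfluous.
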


\begin{proof}  
	First, let $\bm \sigma = (\bm q, \bm u, \bm u, p)$ and $ \mathscr I_h \bm \sigma 	=(\bm{\Pi}_m^o\bm q,\bm{\mathcal J}_h^{\rm curl}\bm u,\bm n\times\mathscr\bm{\mathcal J}_h^{\rm curl} \bm u\times\bm n,\mathcal I_h p)$, where $\bm{\Pi}_m^o$ and  $\mathcal I_h$ denote the standard $L^2$ projection and the Scott-Zhang interpolation, respectively.   Then
	\begin{align*}
	\vertiii{\mathscr I_h \bm \sigma-\bm{\sigma}_h}_h &\le  	C\sup_{\bm 0\neq{\bm{\tau}}_h\in\bm{\Sigma}_h}\frac{{\rm Re}~[\mathscr B_h^-(\mathscr I_h \bm \sigma-\bm{\sigma}_h;\bm{\tau}_h)]}{\vertiii{\bm{\tau}_h}_h}&\textup{ by \eqref{dis_inf_sup_1}}\\
	&=C	\sup_{\bm 0\neq {\bm{\tau}}_h\in\bm{\Sigma}_h}\frac{{\rm Re}~[\mathscr B_h^-(\mathscr I_h \bm \sigma-\bm{\sigma};\bm{\tau}_h)]}{\vertiii{\bm{\tau}_h}_h}&\textup{ by \eqref{HDG_formulation1}}\\
	&\le C\left(\|\bm{\Pi}_m^o\bm q- \bm q\|_{\mathcal T_h}+\|\bm{\mathcal J}_h^{\rm curl}\bm u-\bm u\|_{\mathcal T_h}\right.\\
	&\quad + \left.\|\bm \nabla\times (\bm{\mathcal J}_h^{\rm curl}\bm u-\bm u)\|_{\mathcal T_h}+\|\nabla (\mathcal I_h p -p)\|_{\mathcal T_h}\right)&\textup{ by \eqref{boundnessofBminus}}\\
	&=C\left(\|\bm{\Pi}_m^o \bm q-\bm q\|_{\mathcal T_h} +\|\bm{\mathcal J}_h^{\rm curl}\bm u-\bm u\|_{\mathcal T_h}\right.\\
	&\quad + \left.\|\bm{\mathcal J}_h^{\rm div} (\bm \nabla\times \bm u)-\bm \nabla\times\bm u\|_{\mathcal T_h}+\|\nabla (\mathcal I_hp - p)\|_{\mathcal T_h}\right)&\textup{ by \eqref{curl_div_commute}}.
	\end{align*}
	By the definition of $\vertiii{\cdot}_h$ in  \eqref{trip_morm_def}  we have
	\begin{align*}
	&\|\bm{\Pi}_m^o\bm q -\bm q_h\|_{\mathcal T_h} + \|\bm{\mathcal J}_h^{\rm curl}\bm u -\bm u_h\|_{\mathcal T_h}+\|\nabla(\mathcal I_hp - p)\|_{\mathcal T_h}  \\&\quad \le C\left(\|\bm{\Pi}_m^o \bm q-\bm q\|_{\mathcal T_h} +\|\bm{\mathcal J}_h^{\rm curl}\bm u-\bm u\|_{\mathcal T_h} + \|\bm{\mathcal J}_h^{\rm div} (\bm \nabla\times \bm u)-\bm \nabla\times\bm u\|_{\mathcal T_h}+\|\nabla (\mathcal I_hp - p)\|_{\mathcal T_h}\right).
	\end{align*}
	Combined with the  triangle inequality  we get the desired result.
\end{proof}

\begin{remark} \label{remark1}
	We notice that the above estimates are optimal with respect to the regularity of $\bm u$ and $p$. If $\bm u$ and $p$ are   smooth enough, one would need to use a dual argument to get the optimal estimate for $\bm u-\bm u_h$ with respect to the degree of polynomials.
\end{remark}

\section{Numerical experiments}
\label{Numerical_experiments}

In this section, we present three numerical tests of the HDG-CG method  for  Maxwell's equations.  The domain of  the  following three examples is the unit cube $\Omega=(0, 1)\times (0,1 )\times (0,1)$.

\begin{example}\label{example1}
	We first test the convergence rate of the  method for  the Maxwell equations with smooth {(in fact constant) }coefficients with different  wave numbers $\kappa$. More specifically, the data is chosen to be
	\begin{gather*}
	\epsilon_r=1+2i, \ \  \mu_r=0.2-0.4i,  \ \ \bm a=\left(1,2\sqrt{3},2\right)^{\rm T}\\
	\bm d=\left(0,-\frac{1}{2},\frac{\sqrt{3}}{2}\right)^{\rm T},  \  \ \bm u=\bm a \exp(i\kappa{\bm x\cdot\bm d}), \   \ p=0, 
	\end{gather*}
	and the source term is  chosen to match the exact solution of  \Cref{Maxwell_PDE_ori}. The approximation errors  are listed in \Cref{tab2,tab3}.  { In this case the domain is convex and the coefficients are smooth, so that a standard duality 
		approach could be used to prove convergences. The results in \Cref{tab2} are for polynomial degree $k=1,2,3$ and show that optimal order convergence is seen when the solution is smooth and $\kappa>0$.  The same is seen even when
		$\kappa=0$ showing that the HDG-CG method can be used for low frequency (even zero frequency) problem and that a CG Lagrange multiplier space
		is sufficient to achieve stability even when $\kappa=0$.}
	\begin{table}[H]
		\caption{History of convergence for  \Cref{example1} with  $\kappa=1$} \label{tab2}
		\centering
		\begin{tabular}{c|c|c|c|c|c}
			\Xhline{1pt}

			\multirow{2}{*}{$k$}
			&\multirow{2}{*}{$\frac{h}{\sqrt{3}}$}	
			&\multicolumn{2}{c|}{$\|\bm q-\bm q_h\|_{\bm L^2(\Omega)}/\|\bm q\|_{\bm L^2(\Omega)}$}	
			&\multicolumn{2}{c}{$\|\bm u-\bm u_h\|_{\bm L^2(\Omega)}/\|\bm u\|_{\bm L^2(\Omega)}$}		\\
			\cline{3-6}
			& &Error &Rate
			&Error &Rate
			\\
			
			\cline{1-6}
			\multirow{4}{*}{  $1$}
			&	1/2	&	1.23E-01	&		&	1.55E-01	&		\\
			&	1/4	&	6.38E-02	&	0.95 	&	4.37E-02	&	1.82\\
			&	1/8	&	3.23E-02	&	0.98 	&	1.15E-02	&	1.93\\
			&	1/6	&	1.62E-02	&	0.99 	&	2.96E-03	&	1.96\\

			\cline{1-6}
			\multirow{4}{*}{  $2$}
			&	1/2&	2.86E-02	&		&	1.06E-02	&	\\
			&	1/4&	7.50E-03	&	1.93 	&	1.33E-03	&	3.00 \\
			&	1/8&	1.92E-03	&	1.97 	&	1.67E-04	&	2.99 \\
			&	1/12	&	8.58E-04	&	1.99 	&	4.95E-05	&	3.00\\

			\cline{1-6}
			\multirow{4}{*}{  $3$}
			&	1/2&	1.23E-03	&		&	4.81E-04	&	\\
			&	1/4&	1.60E-04	&	2.95 	&	3.29E-05	&	3.87 \\
			&	1/8&	2.03E-05	&	2.98 	&	2.17E-06	&	3.92 \\

			\Xhline{1pt}

		\end{tabular}
	\end{table}
	
	\begin{table}[H]
		\caption{History of convergence for  \Cref{example1} with $\kappa=0$} \label{tab3}
		\centering
		\begin{tabular}{c|c|c|c|c|c}
			\Xhline{1pt}
			\multirow{2}{*}{$k$}
			&\multirow{2}{*}{$\frac{h}{\sqrt{3}}$}	
			&\multicolumn{2}{c|}{$\|\bm q-\bm q_h\|_{\bm L^2(\Omega)}/\|\bm q\|_{\bm L^2(\Omega)}$}	
			&\multicolumn{2}{c}{$\|\bm u-\bm u_h\|_{\bm L^2(\Omega)}/\|\bm u\|_{\bm L^2(\Omega)}$}		\\
			\cline{3-6}
			& &Error &Rate
			&Error &Rate
			\\
			
			\cline{1-6}
			\multirow{4}{*}{  $1$}
			& {1/}2	&	1.20E-01	&		&	1.37E-01	&	\\
			&	 {1/}4	&	6.34E-02	&	0.92 	&	4.09E-02	&	1.74 \\
			&	 {1/}8	&	3.23E-02	&	0.97 	&	1.11E-02	&	1.88 \\
			&	 {1/}12	&	2.16E-02	&	0.99 	&	5.09E-03	&	1.93\\ 
			
			\cline{1-6}
			\multirow{4}{*}{  $2$}
			&	 {1/}2	&	7.24E-03	&		&	5.52E-03	&	\\
			&	 {1/}4	&	1.93E-03	&	1.91 	&	7.13E-04	&	2.95 \\
			&	 {1/}8	&	4.98E-04	&	1.95 	&	9.00E-05	&	2.99 \\
			
			\Xhline{1pt}
		\end{tabular}
	\end{table}
\end{example}

\begin{example}\label{example2}
	We  next test the convergence rate of the  method for  Maxwell's equations with wave number $\kappa =1$ when  the  coefficients are piecewise smooth. More specifically, the data is chosen as
	\begin{gather*}
	\mu_r=\begin{cases}
	0.2-0.4i, & x<0.5,\\
	0.25-0.25i, & x\ge 0.5,
	\end{cases}
	\qquad  \epsilon_r=\begin{cases}
	1+2i, & x<0.5,\\
	2+2i, & x\ge 0.5,
	\end{cases} \\
	\bm u =[u_1, u_2, u_3]^{\rm T}, \ u_1=1, \  u_2=(x-0.5)^2z, \ u_3=(x-0.5)^2y,\ \  p=0.
	\end{gather*}
	The source term is  chosen to match the exact solution of  \Cref{Maxwell_PDE_ori} and the approximation errors  are listed in \Cref{tab4}.   {In this case the duality approach would need to handle discontinuous coefficients which limit the
		regularity of the dual solution to $H^s$ with $s</12$.  Our analysis covers this case.  Since the chosen true solution
		is smooth, we expect optimal order convergence as is seen in \Cref{tab4}.}
	
	\begin{table}[H]
		\caption{History of convergence for  \Cref{example2}.} \label{tab4}
		\centering
		\begin{tabular}{c|c|c|c|c|c}
			\Xhline{1pt}
			\multirow{2}{*}{$k$}
			&\multirow{2}{*}{$\frac{h}{\sqrt{3}}$}	
			&\multicolumn{2}{c|}{$\|\bm r-\bm r_h\|_{\bm L^2(\Omega)}/\|\bm r\|_{\bm L^2(\Omega)}$}	
			&\multicolumn{2}{c}{$\|\bm u-\bm u_h\|_{\bm L^2(\Omega)}/\|\bm u\|_{\bm L^2(\Omega)}$}		\\
			\cline{3-6}
			& &Error &Rate
			&Error &Rate
			\\
			
			\cline{1-6}
			\multirow{5}{*}{  $1$}
			&	 {1/}2	&	5.10E-01	&		&	3.05E-01	&	\\
			&	 {1/}4	&	2.75E-01	&	0.89 	&	8.74E-02	&	1.80 \\
			&	 {1/}8	&	1.42E-01	&	0.95 	&	2.32E-02	&	1.91 \\
			&	 {1/}16	&	7.22E-02	&	0.98 	&	6.02E-03	&	1.95 \\
			&	 {1/}20	&	5.79E-02	&	0.99 	&	3.88E-03	&	1.96 \\
			
			\cline{1-6}
			\multirow{4}{*}{  $2$}
			&	 {1/}2	&	8.05E-02	&		&	2.45E-02	&	\\
			&	 {1/}4	&	2.12E-02	&	1.93 	&	3.05E-03	&	3.00 \\
			&	 {1/}8	&	5.42E-03	&	1.96 	&	3.84E-04	&	2.99 \\
			&	 {1/}12	&	2.43E-03	&	1.98 	&	1.14E-04	&	2.99 \\
			
			\Xhline{1pt}
		\end{tabular}
	\end{table}
\end{example}

\begin{example}\label{example3}
	Finally,  we test the convergence rate of the  method for  the Maxwell equations with wave number $\kappa =1$, when both the  coefficients and the exact solution are piecewise smooth. More specifically, the data is chosen as
	\begin{gather*}
	\mu_r=\begin{cases}
	0.2-0.4i, & x<0.5,\\
	0.25-0.25i, & x\ge 0.5,
	\end{cases}
	\qquad  \epsilon_r=\begin{cases}
	1+2i, & x<0.5,\\
	2+2i, & x\ge 0.5,
	\end{cases} \\
	\bm u =[u_1, u_2, u_3]^{\rm T}, \ u_1=\begin{cases}
	2 & x<0.5,\\
	1, & x\ge 0.5,
	\end{cases},
	\  u_2=(x-0.5)^2z, \ 
	u_3= (x-0.5)^2y, \ \  p=0.
	\end{gather*}
	and the source term is  chosen to match the exact solution of  \Cref{Maxwell_PDE_ori} and the approximation errors  are listed in \Cref{tab5}.   {In this case the solution is piecewise analytic and the mesh is chosen so that the surface of
		discontinuity $x=0.5$ is a union of faces in the mesh. Thus standard error estimates for polynmial interpolation applied tetrahedron by tetrahedron give an optimal error estimate.  This is confirmed in \Cref{tab5}.}
	\begin{table}[H]
		\caption{History of convergence for  \Cref{example3}.} \label{tab5}
		\centering
		\begin{tabular}{c|c|c|c|c|c}
			\Xhline{1pt}

			\multirow{2}{*}{$k$}
			&\multirow{2}{*}{$\frac{h}{\sqrt{3}}$}	
			&\multicolumn{2}{c|}{$\|\bm q-\bm q_h\|_{\bm L^2(\Omega)}/\|\bm q\|_{\bm L^2(\Omega)}$}	
			&\multicolumn{2}{c}{$\|\bm u-\bm u_h\|_{\bm L^2(\Omega)}/\|\bm u\|_{\bm L^2(\Omega)}$}		\\
			\cline{3-6}
			& &Error &Rate
			&Error &Rate
			\\
			
			\cline{1-6}
			\multirow{5}{*}{  $1$}
			&	2	&	5.10E-01	&		    &	1.53E-01	&		\\
			&	4	&	2.75E-01	&	0.89 	&	4.38E-02	&	1.80 	\\
			&	8	&	1.42E-01	&	0.95 	&	1.16E-02	&	1.91 	\\
			&	16	&	7.22E-02	&	0.98 	&	3.02E-03	&	1.95 \\
			&	20	&	5.79E-02	&	0.99 	&	1.95E-03	&	1.96 \\

			\cline{1-6}
			\multirow{4}{*}{  $2$}
			&	2	&	8.05E-02	&		&	1.23E-02	&	\\
			&	4	&	2.12E-02	&	1.93 	&	1.53E-03	&	3.00 \\
			&	8	&	5.42E-03	&	1.96 	&	1.93E-04	&	2.99 \\
			&	12	&	2.43E-03	&	1.98 	&	5.73E-05	&	2.99 \\
			
			\Xhline{1pt}
		\end{tabular}
	\end{table}
	
\end{example}

\section{Conclusion}
{We have proved that the HDG-CG method for the time harmonic Maxwell system converges even in the presence of general piecewise smooth coefficients, as are usually encountered in practical applications.  Our numerical results suggest that the method is stable even when $\kappa=0$ so that the use of expanded HDG spaces for the Lagrange multiplier $p$ is not needed, and a CG space is sufficient.}

{We expect that the method of proof given in our paper will be useful for other HDG methods which are intended for use 
	on heterogeneous media. The dependence of the coefficients in the estimates on the wave number $\kappa$ was not traced, and this should be done in the future.  However the simple model problem used here would need to be revised
	to have Robin type boundary conditions (or other boundary conditiond conditions in which the dependency of the solution of
	continuous problem on $\kappa$ is known).}

\bibliographystyle{siamplain}
\bibliography{complete}

\end{document}